\theoremstyle{definition}
\newtheorem{theorem}{Theorem}[section]
\newtheorem{corollary}[theorem]{Corollary} 
\newtheorem{definition}[theorem]{Definition} 
\newtheorem{example}[theorem]{Example}
\newtheorem{lemma}[theorem]{Lemma}
\newtheorem{question}[theorem]{Question}
\newtheorem{remark}[theorem]{Remark}
\DeclareMathOperator\Aut{Aut}
\DeclareMathOperator\gr{gr}
\DeclareMathOperator\GKdim{GKdim}
\DeclareMathOperator\Kdim{Kdim}
\DeclareMathOperator\Maxspec{MaxSpec}
\DeclareMathOperator\der{Der}
\newcommand\derh{\mathrm{Der}^\mathrm{H}}
\DeclareMathOperator\pder{PDer}
\DeclareMathOperator\lnd{LND}
\newcommand\lndds{\lnd_d^*}
\newcommand\lndh{\lnd^\mathrm{H}}
\newcommand\plnd{\mathrm{PLND}}
\newcommand\plndd{\plnd_d}
\newcommand\plnds{\plnd^*}
\newcommand\plndp{\plnd_{\pcnt}}
\newcommand\plnddp{\plnd_{d,\pcnt}}
\newcommand\plndh{\plnd^\mathrm{H}}
\newcommand\plndds{\plnd_d^*}
\newcommand\plndps{\plnd_{\pcnt}^*}
\newcommand\plnddps{\plnd_{d,\pcnt}^*}
\newcommand\plnddh{\plnd_d^\mathrm{H}}
\newcommand\ml{\mathrm{ML}}
\newcommand\mlds{\ml_d^*}
\newcommand\pml{\mathrm{PML}}
\newcommand\pmls{\pml^*}
\newcommand\pmld{\pml_d}
\newcommand\pmlds{\pml_d^*}
\renewcommand\int{\mathrm{int}}
\newcommand\cnt{\mathcal Z}
\newcommand\pcnt{\cnt_P}
\newcommand\inv{^{-1}}
\newcommand\niso{\ncong}
\newcommand\iso{\cong}
\newcommand\tensor{\otimes}
\newcommand\cC{\mathcal C}
\newcommand\cP{\mathcal P}
\newcommand\cS{\mathcal S}
\newcommand{\kk}{\Bbbk}
\newcommand{\NN}{\mathbb N}
\newcommand{\RR}{\mathbb R}
\newcommand{\CC}{\mathbb C}
\newcommand{\VV}{\mathbb V}
\newcommand\fm{\mathfrak m}
\renewcommand\H{\mathrm{H}}
\begin{document}

\title{The Zariski cancellation problem for Poisson algebras}

\author[Gaddis]{Jason Gaddis}
\address{Miami University, Department of Mathematics, 301 S. Patterson Ave., Oxford, Ohio 45056} 
\email{gaddisj@miamioh.edu}

\author[Wang]{Xingting Wang}
\address{Department of Mathematics, Howard University, Washington DC, 20059} 
\email{xingting.wang@howard.edu}

\subjclass[2010]{17B36, 16W25, 14R10}
\keywords{Zariski cancellation problem, Poisson algebra, Locally nilpotent derivation, Discriminant}
\date{\today}
\begin{abstract}
We study the Zariski cancellation problem for Poisson algebras asking whether $A[t]\cong B[t]$ implies $A\cong B$ when $A$ and $B$ are Poisson algebras. We resolve this affirmatively in the cases when $A$ and $B$ are both connected graded Poisson algebras finitely generated in degree one without degree one Poisson central elements and when $A$ is a Poisson integral domain of Krull dimension two with nontrivial Poisson bracket. We further introduce Poisson analogues of the Makar-Limanov invariant and the discriminant to deal with the Zariski cancellation problem for other families of Poisson algebras.
\end{abstract}

\maketitle

\setcounter{tocdepth}{1}
\tableofcontents

\section{Introduction}
\label{sec.intro}

It is both surprising and interesting to know that many fundamental questions about the geometry and symmetries of the affine space $\mathbb A^n$, which is a basic object in algebraic geometry, still remain open in the 21st century. Among those open questions is the cancellation property, which was asked by Zariski in the following sense. 
\begin{question}[(Zariski Cancellation Problem)]
Does an isomorphism $Y\times \mathbb A^1\cong A^{n+1}$ imply an isomorphism $Y\cong \mathbb A^n$, for any affine variety $Y$?
\end{question}
The interested reader is directed to the survey by Gupta for further background on this problem \cite{gupta}. In recent years, several works have extended the Zariski cancellation problem from commutative algebraic geometry to noncommutative projective algebraic geometry, where the cancellation property is studied for certain types of Artin-Schelter regular algebras, which are noncommutative graded analogues of commutative polynomial rings; see \cite{BZ1,BZ2,LWZ1,LWZ2}.

In this paper, we extend the original Zariski cancellation problem in a different direction. Instead of generalizing the affine variety $Y$ into a noncommutative algebraic variety, which is represented by a noncommutative algebra as its coordinate ring, we assume $Y$ to have extra structure, namely we assume that there exists a bivector $\pi\in \bigwedge^2(TY)$ satisfying a vanishing Schouten-Nijenhuis bracket $[\pi,\pi]=0$. In algebraic terms, $Y$ is an affine Poisson variety whose coordinate ring turns out to be a commutative Poisson algebra. Therefore, we are interested in the following question.

\begin{question}[(Zariski Cancellation Problem for Poisson Algebras)]
When is a Poisson algebra $A$ cancellative? That is, when does an isomorphism of Poisson algebras $A[t] \iso B[t]$ for another Poisson algebra $B$ imply an isomorphism $A \iso B$ as Poisson algebras?
\end{question}

The notion of the Poisson bracket, first introduced by  Sim{\'e}on Denis Poisson, arises naturally in Hamiltonian mechanics and differential geometry. Poisson algebras have become deeply entangled with non-commutative geometry, integrable systems, and topological field theories.
They are essential in the study of the noncommutative discriminant \cite{BY,NTY} and representation theory of noncommutative algebras \cite{WWY1,WWY2}. In addition, there has been renewed interest in enveloping algebras of Poisson algebras \cite{LOW,LWZ3}.

Recently, Adjamagbo and van den Essen \cite{avdE} proved that the famous Jacobian conjecture for polynomial algebras has an equivalent statement for Poisson algebras. As pointed out by van den Essen \cite{VDE}, the Zariski cancellation problem (especially in dimension two) is closely related to the Jacobian conjecture. Therefore, one of our motivations is to study the Zariski cancellation problem for Poisson algebras with a potential link to the above-mentioned Poisson version of the Jacobian conjecture.

While we are searching for general methods and techniques in this direction, we find many theories developed by Bell and Zhang in the (associative) noncommutative setting \cite{BZ1,BZ2} can be adapted to the Poisson setting. Their work relies both on the idea of the noncommutative discriminant \cite{CPWZ1,CPWZ2} and the Makar-Limanov invariant \cite{ML}. Other work on noncommutative Zariski cancellation has focused on path algebras and their quotients \cite{Gpath,LWZ1} as well as a Morita invariance version \cite{LWZ2}.

In Section \ref{sec.background}, we provide background on Poisson algebras and related concepts that are critical to our study. Section \ref{sec.zariski} continues this and introduces the Zariski cancellation problem for Poisson algebras, where we discuss various versions of the Poisson cancellation property and the relations between them. In Section \ref{sec.graded} we restrict our attention to graded Poisson algebras and establish a graded Poisson version of the Zariski cancellation problem (Theorem \ref{thm.zariski1}). This is a consequence of the following theorem, which is an analogue of a result of Bell and Zhang in the graded (associative) algebra setting \cite{BZ1}.

\begin{theorem}[(Theorem \ref{thm.iso})]
\label{thm.intro1}
Let $A$ and $B$ be two connected graded Poisson algebras finitely generated in degree one. If $A\cong B$ as ungraded Poisson algebras, then $A\cong B$ as graded Poisson algebras. 
\end{theorem}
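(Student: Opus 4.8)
The plan is to follow the strategy used by Bell--Zhang in the associative graded setting, adapting the argument to Poisson algebras. The main point is that for a connected graded algebra $A = \bigoplus_{i \geq 0} A_i$ finitely generated in degree one, any algebra automorphism (or isomorphism to another such algebra) automatically respects the grading after we identify the grading intrinsically. The key intrinsic object is the maximal graded ideal $\fm = \bigoplus_{i \geq 1} A_i$, which is the \emph{unique} maximal ideal that is also a Poisson ideal and has the property that $A/\fm \cong \kk$; more importantly, powers of $\fm$ give a filtration, and the associated graded $\gr_{\fm} A = \bigoplus_{i \geq 0} \fm^i / \fm^{i+1}$ recovers $A$ itself (since $A$ is already graded and generated in degree one, so $\fm^i = \bigoplus_{j \geq i} A_j$). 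Since $A$ and $B$ are generated in degree one, a Poisson algebra isomorphism $\phi \colon A \to B$ need not send $\fm_A$ to $\fm_B$ on the nose, but it will after composing with a suitable automorphism.

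First I would establish that $\fm_A$ is characterized intrinsically: it is the Jacobson radical of $A$ after localizing appropriately, or more simply, in the connected graded case $\fm_A$ is the unique maximal ideal $I$ with $\dim_\kk A/I = 1$ and $A/I$ carrying the trivial Poisson bracket — but actually the cleanest route is to observe that any Poisson isomorphism $\phi \colon A \to B$ must carry $\fm_A$ to a maximal ideal $\fn$ of $B$ with $B/\fn \cong \kk$, and then argue that all such maximal ideals are conjugate (equivalently, that one may correct $\phi$ by an automorphism of $B$ so that $\fn = \fm_B$). In the commutative connected graded setting this conjugacy is standard: the augmentation ideal is the unique \emph{graded} maximal ideal, and any other maximal ideal $\fn$ with residue field $\kk$ corresponds to a $\kk$-point, which can be translated to the origin by an automorphism; one just needs to check this translation can be chosen to be a Poisson automorphism, which follows because translations by elements of $B_1$ extend the identity on generators and preserve the bracket formulas since the bracket is determined on $B_1$.

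Next, assuming $\phi(\fm_A) = \fm_B$, the isomorphism $\phi$ induces isomorphisms $\fm_A^i / \fm_A^{i+1} \to \fm_B^i / \fm_B^{i+1}$ for all $i$, hence an isomorphism of associated graded Poisson algebras $\gr_{\fm_A} A \cong \gr_{\fm_B} B$. Since $A$ is connected graded and generated in degree one, the canonical surjection $A \to \gr_{\fm_A} A$ is an isomorphism of graded Poisson algebras (the Poisson bracket has degree $\geq 0$... actually one must be slightly careful here — the Poisson bracket on a graded Poisson algebra in this paper presumably has a fixed degree $\ell$, so that $\{A_i, A_j\} \subseteq A_{i+j+\ell}$; the filtration $\fm^i$ is still multiplicative and compatible with the bracket, and the comparison map is still an isomorphism of graded Poisson algebras). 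Similarly $B \cong \gr_{\fm_B} B$. Composing, we get $A \cong B$ as graded Poisson algebras.

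The main obstacle I expect is the conjugacy step: verifying that an arbitrary Poisson isomorphism $A \to B$ can be corrected so as to send the augmentation ideal to the augmentation ideal, and that the correcting automorphism is itself a Poisson automorphism. In the associative case Bell--Zhang handle this via a careful analysis of the behavior of isomorphisms on low-degree pieces; in the Poisson case one additionally needs that the Poisson bracket, being determined by its values on degree-one elements (since the algebra is generated in degree one), is automatically preserved by the candidate translation automorphism. I would also need to confirm the precise convention for the degree of the Poisson bracket being used, since that affects whether $\gr_{\fm} A \cong A$ as \emph{graded Poisson} algebras rather than merely as graded commutative algebras — but in all the natural conventions the multiplicative filtration by $\fm^i$ interacts correctly with a homogeneous bracket, so this should be a routine verification.
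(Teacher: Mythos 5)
There is a genuine gap, and it sits exactly where you flagged ``the main obstacle'': the conjugacy/translation step. You propose to correct an arbitrary Poisson isomorphism $\phi\colon A\to B$ by an automorphism of $B$ translating the maximal ideal $\phi(A_{\ge 1})$ back to the augmentation ideal $B_{\ge 1}$, arguing that ``translations by elements of $B_1$'' do the job. But a translation of the degree-one generators by scalars is in general \emph{not} an algebra endomorphism of a connected graded algebra with relations: for $B=\kk[x,y]/(xy)$ the map $x\mapsto x+1$, $y\mapsto y$ does not preserve the relation $xy=0$. Worse, the maximal ideals with residue field $\kk$ need not be conjugate at all (in that example the augmentation ideal corresponds to the singular point of the union of two lines, so no automorphism moves a smooth point onto it), and the augmentation ideal is not characterized as ``the unique maximal Poisson ideal with residue field $\kk$'' either (with the trivial bracket every maximal ideal is Poisson). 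So the reduction to the case $\phi(A_{\ge 1})=B_{\ge 1}$ cannot be achieved this way, and that reduction is the entire content of the theorem: once $\phi(A_{\ge 1})=B_{\ge 1}$, your associated-graded argument is exactly what the paper does and is unproblematic.

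The paper handles the bad case differently: writing $\phi(x_i)=y_i+y_i'$ with $y_i'\in B_{\ge 2}$ and $\phi(x_d)=\alpha_d+y_d+y_d'$ with possibly $\alpha_d\neq 0$, it does \emph{not} produce a correcting automorphism of $B$. Instead it proves, by a simultaneous induction on degree (``every homogeneous relation $r(x_1,\dots,x_d)=0$ of degree $N$ in $A$ forces $r(y_1,\dots,y_d)=0$ in $B$'' together with $\dim A_N=\dim B_N$), that the \emph{leading-term map} $\widetilde\phi\colon x_i\mapsto y_i$ is a well-defined algebra isomorphism; the constant $\alpha_d$ is absorbed by comparing lowest-degree terms of $\phi(r)=0$. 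A second induction, comparing graded components of $\phi(\{x_i,x_j\})$, then shows $\widetilde\phi$ preserves the bracket (here one actually has to argue that certain coefficients of $x_d$ and $x_d^2$ in $\{x_i,x_j\}$ vanish when $\alpha_d\neq 0$ --- this is more than ``the bracket is determined on $B_1$''). To repair your write-up you would need to replace the translation argument with this kind of direct construction of the graded isomorphism, since no automorphism of $B$ is available to normalize $\phi$.
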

\noindent This result is then applied to the family of skew quadratic Poisson algebras to obtain all possible isomorphisms between them based on the coefficient matrices given by their Poisson brackets (Theorem \ref{thm.skewP}). 

Much of our remaining work makes use of the {\it Poisson center} and this turns out to be a suitable replacement for the (algebra) center used in \cite{BZ2}. We study the Poisson center and its implications for Poisson cancellation in Section \ref{sec.artcnt}.

\begin{theorem}
\label{thm.intro2}
Let $A$ be a Poisson algebra.
\begin{enumerate}
    \item (Corollary \ref{lemm.artincan}) If $A$ is noetherian with artinian Poisson center, then $A$ is Poisson cancellative.
    \item (Theorem \ref{thm.univ}) If $A$ has trivial Poisson center, then $A$ is Poisson cancellative.
\end{enumerate}
\end{theorem}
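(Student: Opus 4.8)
The plan is to mimic the associative-algebra strategy of Bell–Zhang, with the Poisson center playing the role of the center. For part (1), suppose $A$ is noetherian with artinian Poisson center $\cnt_P(A)$, and suppose $A[t] \iso B[t]$ as Poisson algebras for some Poisson algebra $B$. The first step is to understand how the Poisson center behaves under polynomial extension: I expect $\cnt_P(A[t]) = \cnt_P(A)[t]$, since $t$ is a new Poisson-central indeterminate (the Poisson bracket on $A[t]$ being the canonical extension with $t$ central), and an element of $A[t]$ is Poisson central iff each of its coefficients is. Applying the isomorphism, $\cnt_P(A)[t] \iso \cnt_P(B)[t]$ as Poisson algebras, and since $\cnt_P(A)$ is artinian this forces $\cnt_P(B)$ to be artinian as well (an artinian ring adjoined a polynomial variable has Krull dimension one, which is detected on both sides). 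The heart of the argument is then a locally nilpotent derivation argument: one shows that if $\delta$ is a Poisson-locally-nilpotent derivation of $B[t]$, then $\delta$ restricts appropriately, and in particular that the ``$\partial/\partial t$'' derivation on $A[t]$ transported to $B[t]$ must have kernel equal to $B$; the artinian hypothesis on the Poisson center is what rules out pathological LNDs, because locally nilpotent derivations act trivially on an artinian (hence, in a suitable sense, ``rigid'') Poisson center. From this one extracts $A \iso B$.

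For part (2), assume $A$ has trivial Poisson center, meaning $\cnt_P(A) = \kk$. Suppose $A[t] \iso B[t]$. Again $\cnt_P(A[t]) = \cnt_P(A)[t] = \kk[t]$, so $\cnt_P(B[t]) \iso \kk[t]$ as Poisson algebras, and hence $\cnt_P(B)[t] \iso \kk[t]$, which forces $\cnt_P(B) = \kk$ as well (compare transcendence degrees / Krull dimensions). Now the key point: in $A[t]$ the polynomial variable $t$ is, up to scalar and up to adding a constant, characterized intrinsically — I would argue that $\kk[t] = \cnt_P(A[t])$ and that moreover $t$ generates the Poisson center, so any Poisson isomorphism $\phi\colon A[t] \to B[t]$ must send $\cnt_P(A[t]) = \kk[t]$ onto $\cnt_P(B[t]) = \kk[s]$ (writing $s$ for the variable on the $B$ side), hence $\phi(t) = \alpha s + \beta$ for scalars $\alpha \in \kk^\times$, $\beta \in \kk$. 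After composing with the automorphism $s \mapsto \alpha^{-1}(s - \beta)$ of $B[t]$ we may assume $\phi(t) = s$. Then $\phi$ descends to an isomorphism $A = A[t]/(t) \to B[t]/(s) = B$ of Poisson algebras, giving $A \iso B$.

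The step I expect to be the main obstacle is the LND/rigidity input in part (1): proving that a Poisson-locally-nilpotent derivation of $B[t]$ with the right kernel forces the ``correct'' polynomial subring, i.e.\ that the artinian Poisson center genuinely obstructs nontrivial locally nilpotent Poisson derivations in the way one needs. In the associative case this is exactly where Bell–Zhang use delicate arguments about the center; translating the notion of ``locally nilpotent'' and the relevant filtration arguments to Poisson derivations, and checking that an artinian Poisson center is preserved and detected correctly through the isomorphism, is the technical core. Part (2) is cleaner, but still requires the clean intrinsic characterization of $\cnt_P(A[t])$; the mild subtlety there is confirming $\cnt_P(A)[t] = \cnt_P(A[t])$ in full generality (no noetherian hypothesis is assumed in (2)), which should follow from a direct coefficient-wise computation with the Poisson bracket on the polynomial extension.
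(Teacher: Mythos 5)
Your part (2) is correct and is essentially the paper's argument for Theorem \ref{thm.univ} specialized to $R=\kk[t]$: one computes $\pcnt(A[t])=\kk[t]$, deduces $\pcnt(B)=\kk$, observes that $\phi$ carries the Poisson center $\kk[t]$ isomorphically onto $\kk[s]$ so that $\phi(t)=\alpha s+\beta$ with $\alpha\in\kk^\times$, and then passes to the quotient by the Poisson ideal $(t)$. (The paper proves the stronger statement that $A$ is \emph{universally} Poisson cancellative, replacing $\kk[t]$ by any affine domain $R$ with a Poisson ideal $I$ such that $R/I=\kk$, but your version suffices for the claim as stated.)

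Part (1) has a genuine gap. The locally nilpotent derivation route you sketch is not the right tool here, for two reasons. First, the key claim --- that an artinian Poisson center ``rules out pathological LNDs'' on $B[t]$ and forces the transported $\partial/\partial t$ to have kernel exactly $B$ --- is asserted but not proved, and it is not clear it is true; moreover, even granting control of the kernels, the extraction of $A\iso B$ from such kernel computations (the paper's Lemma \ref{lemm.cancel}) relies on Lemma \ref{lem.subalg}, which requires $A$ (or $\pcnt(A)$) to be an affine \emph{domain} of finite Krull dimension. An artinian Poisson center is typically not a domain (it may be a direct sum of fields, or have nilpotents), and no domain hypothesis on $A$ is available in part (1), so the Makar-Limanov machinery of Section \ref{sec.ML} simply does not apply. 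The paper's actual proof of Corollary \ref{lemm.artincan} is entirely different: it shows the artinian Poisson center is \emph{strongly detectable} by reducing modulo the nilradical (Lemma \ref{lemm.nilrad}), passing to a base field extension so that $\pcnt/N$ becomes a finite direct sum of copies of $\kk$ (Lemma \ref{lemm.dectext}), and using that detectability is preserved under finite direct sums (Lemma \ref{lemm.directprop}); then $A$ is $\pcnt$-detectable by Lemma \ref{lemm.retrdc}, and Theorem \ref{thm.dectcan} converts detectability into cancellativity via the Hopfian property of noetherian rings: the surjection $B[s_1,\dots,s_n]\twoheadrightarrow B[\phi(t_1),\dots,\phi(t_n)]=B[s_1,\dots,s_n]$ must be an automorphism. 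This Hopfian step, which is where the noetherian hypothesis is actually used, is absent from your sketch; ``from this one extracts $A\iso B$'' is precisely the part that needs an argument.
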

\noindent Theorem \ref{thm.intro2} (2) can be applied to show that Poisson integral domains of Krull dimension two which have nontrivial Poisson brackets are Poisson cancellative (Corollary \ref{cor.cancel1}). Here the non-triviality of the Poisson bracket plays an essential role since by \cite{Fi,Da} there are commutative domains of Krull dimension two that are not cancellative. 

It is well-known that locally nilpotent derivations are important in the study of cancellation. This is the crux of the work of Makar-Limanov \cite{ML}. Bell and Zhang exploit these ideas to great effect in their work on cancellation \cite{BZ2}. We further these ideas 
by establishing a connection between locally nilpotent Poisson derivations and Poisson cancellation in Section \ref{sec.ML}.

\begin{theorem}[(Theorem \ref{thm.strongplndcan})]
\label{thm.intro3}
Assume $\kk$ is a field of characteristic zero. Let $A$ be an affine Poisson domain over $\kk$ with finite Krull dimension. If $A$ has no nontrivial locally nilpotent Poisson derivations, then $A$ is Poisson cancellative.
\end{theorem}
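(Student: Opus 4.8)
The plan is to adapt Bell and Zhang's Makar--Limanov invariant argument \cite{BZ2} to the Poisson setting. For a Poisson algebra $R$ write $\ml(R)=\bigcap_\delta\ker\delta$, the intersection taken over all locally nilpotent Poisson derivations $\delta$ of $R$; as recorded in Section~\ref{sec.ML} this is invariant under Poisson isomorphism.

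Let $\phi\colon A[t]\iso B[t]$ be a Poisson isomorphism. First I would note the elementary facts about $B$: it is a Poisson subalgebra of the domain $B[t]\iso A[t]$ and a quotient of the affine algebra $B[t]$, hence an affine Poisson domain, and $\Kdim B=\Kdim B[t]-1=\Kdim A[t]-1=\Kdim A<\infty$, so $\mathrm{trdeg}_\kk\mathrm{Frac}(A)=\mathrm{trdeg}_\kk\mathrm{Frac}(B)$. Since $t$ is Poisson central, $\partial_t$ is a nonzero locally nilpotent Poisson derivation of $A[t]$ (resp.\ $B[t]$) with kernel $A$ (resp.\ $B$); thus $\ml(A[t])\subseteq A$, $\ml(B[t])\subseteq B$, and by invariance $\phi(\ml(A[t]))=\ml(B[t])$.

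The technical core is a Poisson analogue of Bell and Zhang's key lemma: \emph{if $A$ is an affine Poisson domain of finite Krull dimension over a field $\kk$ of characteristic zero having no nonzero locally nilpotent Poisson derivation, then $\ml(A[t])=A$} --- equivalently, every locally nilpotent Poisson derivation $\delta$ of $A[t]$ annihilates $A$. To prove it, suppose $\delta\ne0$ and $\delta(A)\ne0$. Since $A$ is affine, $N:=\max_{a\in A}\deg_t\delta(a)$ is finite, and applying local nilpotence of $\delta$ to $t$ forces $\deg_t\delta(t)\le N+1$ (otherwise $\deg_t\delta^m(t)$ grows without bound). Hence $\delta$ raises $t$-degree by at most $N$, so the leading part $\delta_{\mathrm{top}}$ of $\delta$ with respect to the grading $\deg_tA=0$, $\deg_t t=1$ is a nonzero homogeneous locally nilpotent Poisson derivation of $A[t]$ of degree $N\ge0$ with $\delta_{\mathrm{top}}(A)\ne0$; so it suffices to exclude such a $\delta_{\mathrm{top}}$. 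Writing $\delta_{\mathrm{top}}|_A=\bar\delta(-)\,t^N$ with $0\ne\bar\delta\in\pder(A)$ and $\delta_{\mathrm{top}}(t)=w\,t^{N+1}$ with $w\in A$, the case $w=0$ reduces immediately to the identity $\delta_{\mathrm{top}}^m(a)=\bar\delta^m(a)\,t^{mN}$, which makes $\bar\delta$ a nonzero locally nilpotent Poisson derivation of $A$, contradicting rigidity; the same applies when $N=0$. This leaves the case $N\ge1$, $w\ne0$, which I expect to be the main obstacle. Here I would use the transcendence-degree drop for the kernel of a nonzero locally nilpotent derivation of an affine domain in characteristic zero, together with the vanishing of derivations along a finite separable field extension, to force a nonzero $a\in A$ with $\bar\delta(a)+kaw=0$ for some $k\ge1$ (so that $at^k$ is a positive-degree element of $\ker\delta_{\mathrm{top}}$), and then extract a locally nilpotent Poisson derivation of $A$ from $\delta_{\mathrm{top}}$ after localizing at $at^k$, paralleling the argument of \cite{BZ2}.

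Granting the key lemma, the proof finishes quickly. Applied to $A$ it gives $\ml(A[t])=A$, hence $\ml(B[t])=\phi(A)\iso A$ and $\mathrm{trdeg}_\kk\mathrm{Frac}(\ml(B[t]))=\mathrm{trdeg}_\kk\mathrm{Frac}(B)$. If $B$ had a nonzero locally nilpotent Poisson derivation $\partial$, extend it to $B[t]$ via $\partial(t)=0$; its kernel there is $(\ker\partial)[t]$, so $\ml(B[t])\subseteq B\cap(\ker\partial)[t]=\ker\partial$, whence $\mathrm{trdeg}_\kk\mathrm{Frac}(\ml(B[t]))\le\mathrm{trdeg}_\kk\mathrm{Frac}(\ker\partial)=\mathrm{trdeg}_\kk\mathrm{Frac}(B)-1$, a contradiction. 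Hence $B$ is also LND-rigid, and the key lemma applied to $B$ gives $\ml(B[t])=B$. Thus $\phi(A)=\ml(B[t])=B$, so $\phi$ restricts to a Poisson isomorphism $A\iso B$, and $A$ is Poisson cancellative.
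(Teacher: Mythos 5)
Your overall architecture matches the paper's: the theorem is reduced to the key lemma that $\plnd$-rigidity of an affine Poisson domain $A$ forces $\pml(A[t])=A$ (the paper's Lemma \ref{lem.plnd}, fed into Theorem \ref{thm.plndcan}), and your derivation of cancellation from that lemma is correct. Indeed, your transcendence-degree argument showing that $B$ inherits $\plnd$-rigidity, so that the key lemma applies to $B$ as well and yields $\phi(A)=\pml(B[t])=B$ directly, is a clean substitute for the paper's route through Lemma \ref{lem.subalg}. The case analysis you set up for the key lemma (bounding $\deg_t\delta(t)$ by $N+1$, passing to the homogeneous leading part, and disposing of the subcases $w=0$ and $N=0$ by restricting $\bar\delta$ to $A$) also agrees with the paper's Cases 1 and 3 in the proof of Lemma \ref{lem.plnd}.

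The genuine gap is the remaining case $N\ge 1$, $w\ne 0$, which you correctly identify as the main obstacle but do not resolve. Your transcendence-degree argument does produce a homogeneous kernel element $at^k$ with $\bar\delta(a)+kaw=0$, but the final step --- ``extract a locally nilpotent Poisson derivation of $A$ from $\delta_{\mathrm{top}}$ after localizing at $at^k$'' --- is where the real content lies, and it is not clear it goes through as stated: one computes $\delta_{\mathrm{top}}^{\,n}(c)=\bigl(\bar\delta+(n-1)Nw\bigr)\circ\cdots\circ\bigl(\bar\delta+Nw\bigr)\circ\bar\delta(c)\,t^{nN}$, and converting this twisted nilpotence of $\bar\delta$ into honest local nilpotence of a Poisson derivation of $A$ (or of $A_a$) via the relation $w=-\bar\delta(a)/(ka)$ amounts to conjugating by fractional powers $a^{jN/k}$, which need not exist. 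The paper resolves this case differently: it first observes that applying $\delta$ to $\{x,a\}=0$ forces $w\in\pcnt(A)$, then invokes Lemma \ref{lem.embed} to embed $A[x]$ into a Poisson--Ore extension $E[y;0,\delta_0]_P$ on which the homogeneous locally nilpotent derivation acts as $\partial/\partial y$, and derives a contradiction from the $y$-degree identity $\deg_y\phi(b)+(m+1)\deg_y\phi(x)=\deg_y\phi(x)-1$. To complete your proof you would need either to carry out that embedding argument (which is the substance of Lemma \ref{lem.embed}) or to supply a genuinely different resolution of this case.
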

\noindent In positive characteristic, we can prove a similar result by replacing Poisson derivations by higher Poisson derivations introduced by Launois and Lecoutre \cite{LL}.

In Section \ref{sec.pdisc} we introduce the Poisson discriminant as well as the notion of effectiveness for these discriminants. We show that effectiveness controls the locally nilpotent Poisson derivations, which in the noncommutative setting was first observed by Bell and Zhang \cite{BZ2} where discriminants are defined for noncommutative algebras that are module-finite over their centers.

\begin{theorem}[(Theorem \ref{thm.prigid})]
Let $A$ be an affine Poisson domain with affine Poisson center. If the Poisson discriminant exists and is effective either in $A$ or its Poisson center, then $A$ is Poisson cancellative.  
\end{theorem}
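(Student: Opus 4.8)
The plan is to run the familiar scheme \emph{rigidity under locally nilpotent derivations implies cancellation}, with the Poisson discriminant supplying the rigidity. Suppose $\phi\colon A[t]\to B[t]$ is an isomorphism of Poisson algebras for some Poisson algebra $B$. Since $A$ is a domain, $A[t]\cong B[t]$ is a domain, so $B$ is a Poisson domain. The operator $\partial_t=\partial/\partial t$ is a locally nilpotent Poisson derivation of $B[t]$: it lowers $t$-degree, and because $t$ is Poisson central the Leibniz rule for $\partial_t$ is compatible with the bracket. Hence $\delta:=\phi^{-1}\circ\partial_t\circ\phi$ is a locally nilpotent Poisson derivation of $A[t]$, and the whole proof reduces to showing $\delta(A)=0$; given that, a short explicit argument produces a Poisson isomorphism $A\cong B$.

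The step carrying all the content—and the one I expect to be the main obstacle—is to deduce from effectiveness of the Poisson discriminant that every locally nilpotent Poisson derivation of $A[t]$ annihilates $A$; this is the Poisson version of the Bell--Zhang principle that effectiveness controls locally nilpotent derivations, and establishing it is the purpose of Section~\ref{sec.pdisc}. Two facts feed into it. First, $A[t]$ is again module-finite over its Poisson center, which equals $\pcnt(A)[t]$: if $z=\sum_i z_it^i\in\pcnt(A[t])$, comparing $t$-coefficients in $\{z,a\}=0$ for $a\in A$ gives each $z_i\in\pcnt(A)$; and the Poisson trace of $A[t]$ over $\pcnt(A)[t]$ restricts on $A$ to that of $A$ over $\pcnt(A)$, so the two Poisson discriminants agree up to a unit of $\pcnt(A)[t]=\pcnt(A)$. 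Second, effectiveness is inherited by this polynomial extension. Thus in the case ``effective in $A$'', Section~\ref{sec.pdisc} gives directly that every locally nilpotent Poisson derivation of $A[t]$ kills $A$. In the case ``effective in the Poisson center'', one uses that any Poisson derivation preserves the Poisson center—if $\{z,x\}=0$ for all $x$ then $\{\delta z,x\}=-\{z,\delta x\}=0$—so $\delta$ restricts to an ordinary locally nilpotent derivation of the commutative (bracket-trivial) algebra $\pcnt(A)[t]$; effectiveness there forces $\delta$ to vanish on $\pcnt(A)$, and since $\ch\kk=0$ and $A$ is integral over $\pcnt(A)$, clearing denominators in the separable minimal polynomial of $a\in A$ over $\operatorname{Frac}(\pcnt(A))$ to $F\in\pcnt(A)[x]$ and applying $\delta$ to $F(a)=0$ yields $F'(a)\,\delta(a)=0$ with $F'(a)\neq0$ in the domain $A[t]$, whence $\delta(a)=0$.

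Now assume $\delta(A)=0$. Then $\delta$ is $A$-linear on the free $A$-module $A[t]=\bigoplus_i At^i$, hence determined by $h:=\delta(t)$; writing $h=\sum_j a_jt^j$ one computes $\delta(h)=(\partial_t h)h$, and local nilpotence together with $A$ being a characteristic-zero domain forces $h\in A$, so $\delta=h\,\partial_t$ with $0\neq h\in A$ (nonzero as $\delta\neq0$). From $A\subseteq\ker\delta$ we get $\phi(A)\subseteq\phi(\ker\delta)=\ker\partial_t=B$, while $\partial_t(\phi(t))=\phi(\delta(t))=\phi(h)\in\phi(A)\subseteq B$ has $t$-degree $0$; comparing coefficients forces $\phi(t)=c_0+c_1t$ with $c_1=\phi(h)\neq0$. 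Finally $B[t]=\phi(A[t])=\phi(A)[\phi(t)]=\phi(A)[c_0+c_1t]$, and since $B$ is a domain any $b\in B$, expanded as a $\phi(A)$-polynomial in the degree-one element $c_0+c_1t$, must equal its constant term and so lie in $\phi(A)$; hence $\phi(A)=B$ and $\phi|_A\colon A\to B$ is the required Poisson isomorphism.

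In positive characteristic the operator $\partial_t$ is the wrong tool—it is automatically locally nilpotent and its kernel $B[t^p]$ is too large—so one replaces it throughout by the standard higher derivation on $B[t]$ and works with higher locally nilpotent Poisson derivations in the sense of Launois and Lecoutre; the input supplied by Section~\ref{sec.pdisc} is set up to control those as well, and the argument above runs with ``$\ker$'' read as the joint higher kernel, which is again $B$. Apart from the central implication ``effective Poisson discriminant implies annihilation of the locally nilpotent Poisson derivations of $A[t]$'', the other delicate point is the passage in the ``Poisson center'' case from rigidity of $\pcnt(A)$ to rigidity of all of $A$, where module-finiteness and separability are essential.
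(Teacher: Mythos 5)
Your overall architecture (effectiveness $\Rightarrow$ rigidity under locally nilpotent Poisson derivations $\Rightarrow$ cancellation) matches the paper's, and your endgame for $n=1$ --- showing $\delta(A)=0$ forces $\delta=h\,\partial_t$ with $h\in A\setminus\{0\}$, hence $\phi(A)=\phi(\ker\delta)=\ker\partial_t=B$ --- is a clean alternative to the paper's route through the Makar--Limanov invariant and Lemma \ref{lem.subalg}. But there are two genuine gaps, both stemming from a misreading of what the Poisson discriminant is in this paper. First, the rigidity that effectiveness buys (Lemma \ref{lemm.prigid}) is only rigidity with respect to locally nilpotent (higher) Poisson derivations \emph{that annihilate the discriminant} $d$, i.e.\ $\plndds$-rigidity, not $\plnd$-rigidity. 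To apply it to $\delta=\phi^{-1}\circ\partial_t\circ\phi$ you must first know $\delta(d)=0$, equivalently $\phi(d)\in B$. This is exactly where the hypothesis that the property $\cP$ is \emph{stable} enters: one argues $\phi\bigl(I_{\cP}(A[t])\bigr)=I_{\cP}(B[t])=I_{\cP}(B)\otimes\kk[t]$ (Lemma \ref{lem.prop} plus stability), and since $\pcnt(B)[t]$ is a domain its units lie in $\pcnt(B)^\times$, so $\phi(d)=_{\pcnt(B)^\times}d_{\cP}(B)$. You replace this step by a claim that $A[t]$ is module-finite over its Poisson center and that the two discriminants are related by a Poisson trace. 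That premise is false here: the paper explicitly defines $d_{\cP}$ representation-theoretically via the $\cP$-locus precisely \emph{because} Poisson centers in characteristic zero are too small for a trace-based definition (e.g.\ $A=\kk[x,y,z]$ with Jacobian bracket has $\pcnt(A)=\kk[f]$, over which $A$ is certainly not module-finite).

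Second, in the case ``$d$ effective in $\pcnt(A)$'' your passage from $\delta|_{\pcnt(A)}=0$ to $\delta|_A=0$ via integrality and separable minimal polynomials has no basis, again because $A$ need not be integral over $\pcnt(A)$ (in the Jacobian example $\Kdim A=3$ while $\Kdim\pcnt(A)=1$; this is the paper's Example \ref{ex.jacdet}, the main intended application of this case). The paper never proves $\delta(A)=0$ in this case. Instead it only establishes that $\phi(\pcnt(A))=\pcnt(B)$ (strong $\pcnt$-retractability, Lemma \ref{lemm.dretract}(4)) and then concludes cancellation by an entirely different mechanism: $\pcnt$-retractability implies $\pcnt$-detectability, hence Poisson detectability, and for noetherian $B$ the resulting surjective endomorphism of $B[s_1,\dots,s_n]$ is an automorphism by the Hopfian property (Theorem \ref{thm.dectcan}(3)). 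You would need to import that detectability/Hopfian argument to close case (2); the separability shortcut cannot be repaired.
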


It is important to mention that for Poisson algebras in characteristic zero their Poisson centers are usually not large enough for us to emulate the definition of discriminants for noncommutative algebras by simply replacing the algebraic center by Poisson center. Hence, we follow the idea in \cite[\S 2]{LWZ2} to introduce the notion of Poisson discriminant from a representation-theoretic point of view. This is leveraged to study a variety of cancellation results related to Poisson algebras for which we can identify a discriminant relative to some property of Poisson algebras. We give two specific examples: 
one is the affine space $\mathbb A^3$ with a nontrivial unimodular Poisson bracket (Example \ref{ex.jacdet}) and the other is derived from the Poisson order on the center of a three-dimensional PI Sklyanin algebra (Example \ref{ex.Skly3}). 

Finally, in Section \ref{S:RQ} we discuss relations between various concepts we introduce in this paper related to the Zariski cancellation problem for Poisson algebras. We further post several open questions that are continuations of the topics that are covered in this paper.

\noindent\textbf{Acknowledgements.}
Part of this research work was done during the second author's visit to the Department of Mathematics at Miami University in March 2019. He is grateful for the first author's invitation and wishes to thank Miami University for its hospitality. The authors would also like to thank Jason Bell and James Zhang for helpful conversations and suggestions.

\section{Preliminaries}
\label{sec.background}
Throughout the paper, we work over a base field $\kk$. Many of our results still work when $\kk$ is only a commutative domain. We will leave the reader to figure out these cases. 

\subsection{Poisson algebras}
A {\sf Poisson algebra} (over $\kk$) is a commutative $\kk$-algebra $A$
equipped with a bilinear product $\{,\}: A \times A \rightarrow A$,
called a {\sf Poisson bracket}, such that $A$ is a Lie algebra under $\{,\}$ 
and the map $\{a, -\}: A \rightarrow A$ is a $\kk$-derivation of $A$ for all $a \in A$. 
When $A$ is a Poisson algebra, the {\sf Poisson center} of $A$ is denoted by 
\[\pcnt(A) = \{ z \in A : \{z,a\}=0 \text{ for all } a \in A \}.\]
We say $A$ has {\sf trivial Poisson center} if $\pcnt(A)=\kk$.
A {\sf homomorphism} $\phi:A \rightarrow B$ of Poisson algebras is an algebra homomorphism  such that for all $a_1,a_2 \in A$, $\phi(\{a_1,a_2\}_A) = \{\phi(a_1),\phi(a_2)\}_B$.
An {\sf isomorphism} of Poisson algebras is a bijective Poisson homomorphism. 
An ideal $I$ of a Poisson algebra $A$ is a {\sf Poisson ideal} if $\{I,A\} \subset I$.
If $I$ is a Poisson ideal of $A$, then $A/I$ is a Poisson algebra with bracket $\{a+I,b+I\}=\{a,b\}+I$ for all $a+I,b+I \in A/I$.

The {\sf Gelfand-Kirillov (GK) dimension} of an affine Poisson algebra $A$ (or more generally, a $\kk$-affine associative algebra $A$) is defined to be
\[\GKdim\, A=\underset{V}{\text{sup}}\left(\overline{\lim_{n\rightarrow \infty}} \log_n\, \dim_\kk\, (V^n)\right),\]
where $V$ varies over all finite-dimensional $\kk$-vector subspaces of $A$. If $A$ is affine commutative, then $\GKdim\, A=\Kdim\, A$ \cite[Theorem 4.5]{GL}, where $\Kdim\, A$ denotes the {\sf Krull dimension} of $A$.

\subsection{Locally nilpotent (Poisson) derivations}
Denote the space of ($\kk$-)derivations (resp. locally nilpotent ($\kk$-)derivations)
of an algebra $A$ by $\der(A)$ (resp. $\lnd(A)$). A {\sf higher derivation (or Hasse-Schmidt derivation)} on $A$ is a sequence of $\kk$-linear endomorphisms $\partial:=\{\partial_i\}_{i=0}^\infty$ such that
$$\partial_0={id}_A\quad \text{and}\quad \partial_n(ab)=\sum_{i=0}^n \partial_i(a)\partial_{n-i}(b)\ \text{for all $a,b\in A$ and all $n\ge 0$}.$$
The collection of higher derivations of $A$ is denoted by $\derh(A)$. 
A higher derivation $\partial$ is called {\sf iterated} if $\partial_i\partial_j=\binom{i+j}{i}\partial_{i+j}$ for all $i,j\ge 0$. 
A higher derivation $\partial$ is called {\sf locally nilpotent} if 
\begin{enumerate}
\item for all $a\in A$ there exists $n\ge 0$ such that $\partial_i(a)=0$ for all $i\ge n$,
\item the map $G_{\partial,t}: A[t]\to A[t]$ defined by $a\mapsto \sum_{i=0}^\infty \partial_i(a)t^i$ and $t\mapsto t$, for all $a\in A$, is an algebra automorphism of $A[t]$.
\end{enumerate}
The collection of locally nilpotent higher derivations of $A$ is denoted by $\lndh(A)$. 

Now let $A$ be a Poisson algebra. A derivation $\alpha$ of $A$ is called a {\sf Poisson derivation} if 
\[ \alpha(\{a,b\}) = \{ \alpha(a),b \} + \{ a, \alpha(b)\} \quad\text{for all $a,b \in A$.} \]
We denote the space of Poisson derivations of $A$ by $\pder(A)$
and the space of locally nilpotent Poisson derivations of $A$ by $\plnd(A)$.
Thus, we have the inclusions $\pder(A) \subset \der(A)$ and $\plnd(A) \subset \lnd(A)$. 
A {\sf higher Poisson derivation} on $A$ is a higher derivation $\partial:=\{\partial_i\}_{i=0}^\infty$ of $A$ such that
\[ \partial_n(\{a,b\})=\sum_{i=0}^n\,\left\{\partial_i(a),\partial_{n-i}(b)\right\} \quad\text{for all $a,b \in A$ and all $n\ge 0$.} \]
If, in addition, $\partial$ is locally nilpotent and the map $G_{\partial,t}$ defined above is a Poisson algebra automorphism, then $\partial$ is said to be a {\sf locally nilpotent higher Poisson derivation}.
We denote the collection of locally nilpotent higher Poisson derivations of $A$ by $\plndh(A)$.

\subsection{Extensions of Poisson algebras}
If $A$ and $B$ are Poisson algebras, then there is a natural Poisson structure on $A \tensor B$ defined by extending linearly the bracket
\begin{align} \label{eq.cext}
\{ a_1 \tensor b_1, a_2 \tensor b_2 \} = \{a_1,a_2\} \tensor b_1b_2 + a_1a_2 \tensor \{b_1,b_2\}
\quad\text{for all } a_1 \tensor b_1,a_2 \tensor b_2 \in A \tensor B.
\end{align}
Thus, the polynomial algebra $A[t] \iso A \tensor \kk[t]$
has a natural bracket, extending the bracket on $A$, defined by setting $\{t,a\}=0$ for all $a \in A$.
We will often identify $A$ and $B$ with their images under the obvious embeddings $A \hookrightarrow A \tensor B$ and $B \hookrightarrow A \tensor B$, respectively.

The following lemma will be used frequently in this paper without mentioning it explicitly.

\begin{lemma}
\label{lem.cnt}
Let $A$ be a Poisson algebra
and let $R$ be an affine Poisson algebra with trivial Poisson bracket.
Then $\pcnt(A \tensor R) = \pcnt(A) \tensor R$.
\end{lemma}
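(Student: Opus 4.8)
The plan is to establish the two inclusions $\pcnt(A)\tensor R\subseteq \pcnt(A\tensor R)$ and $\pcnt(A\tensor R)\subseteq \pcnt(A)\tensor R$ separately, the first being essentially formal.

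For $\pcnt(A)\tensor R\subseteq \pcnt(A\tensor R)$ I would take a simple tensor $z\tensor r$ with $z\in\pcnt(A)$ and $r\in R$ and compute $\{z\tensor r, a\tensor b\}$ for an arbitrary $a\tensor b\in A\tensor R$ using the bracket \eqref{eq.cext}: the first term $\{z,a\}\tensor rb$ vanishes because $z$ is Poisson central in $A$, and the second term $za\tensor\{r,b\}$ vanishes because $R$ has trivial bracket. Bilinearity of the Poisson bracket then shows that every element of $\pcnt(A)\tensor R$ is Poisson central in $A\tensor R$.

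For the reverse inclusion I would fix a $\kk$-basis $\{e_\lambda\}_{\lambda\in\Lambda}$ of $R$, which yields the direct-sum decomposition $A\tensor R=\bigoplus_{\lambda}(A\tensor e_\lambda)$ of $\kk$-vector spaces. Writing an arbitrary $u\in\pcnt(A\tensor R)$ as a finite sum $u=\sum_\lambda a_\lambda\tensor e_\lambda$ with $a_\lambda\in A$, and pairing $u$ against the elements $a\tensor 1$ for $a\in A$, one gets from \eqref{eq.cext} and the triviality of the bracket on $R$ (so $\{e_\lambda,1\}=0$) that
\[
0=\{u,a\tensor 1\}=\sum_\lambda \{a_\lambda,a\}\tensor e_\lambda .
\]
Since the sum is direct, this forces $\{a_\lambda,a\}=0$ for every $\lambda$ and every $a\in A$, so each $a_\lambda\in\pcnt(A)$ and hence $u\in\pcnt(A)\tensor R$.

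I do not anticipate a genuine obstacle: the only step requiring mild care is the linear-independence argument that extracts the coefficients $a_\lambda$, and passing to a $\kk$-basis of $R$ at the outset keeps that bookkeeping transparent. It is worth noting that the argument uses only that the Poisson bracket on $R$ is trivial, not that $R$ is affine; the stated hypothesis is nonetheless what is needed in the applications (where $R$ will be a polynomial ring).
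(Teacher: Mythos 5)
Your proof is correct and follows essentially the same route as the paper: the forward inclusion is the same formal computation with \eqref{eq.cext}, and the reverse inclusion is the paper's argument (write the element over a linearly independent set of elements of $R$, bracket against $a\tensor 1$, and extract coefficients), merely phrased directly rather than by contradiction. Your observation that affineness of $R$ is not actually used is also consistent with the paper's proof.
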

\begin{proof}
It is clear from \eqref{eq.cext} that $\pcnt(A) \tensor R \subset A \tensor R$. 
Let $\sum_{i=1}^n a_i \tensor r_i \in A \tensor R$ with all $r_i$'s being linearly independent over $\kk$.
Assume some $a_k \notin \pcnt(A)$.
Then there exists $a' \in A$ such that $\{a_k,a'\} \neq 0$. Thus,
\[ \left\lbrace \sum_{i=1}^n a_i \tensor r_i , a' \tensor 1 \right\rbrace
	= \sum_{i=1}^n \{ a_i \tensor r_i, a' \tensor 1 \}
	= \sum_{i=1}^n \{ a_i, a'\} \tensor r_i.
\]
It follows that the $r_k$ component is nonzero and so
$\sum_{i=1}^n a_i \tensor r_i \notin \pcnt(A \tensor R)$.
\end{proof}

Also there is a natural Poisson structure on $A\bigoplus B$ defined by extending linearly the bracket 
\begin{align} \label{eq.dext}
\{ a_1 \oplus b_1, a_2 \oplus b_2 \} = \{a_1,a_2\} \oplus \{b_1,b_2\}
\quad\text{for all } a_1\oplus b_1, a_2 \oplus b_2 \in A \bigoplus B.
\end{align}
Suppose $e$ is an idempotent of a commutative algebra $A$. Then $A$ can be decomposed into a direct sum $A=Ae\oplus A(1-e)$ of two algebras $Ae$ and $A(1-e)$. If $A$ is a Poisson algebra, then $Ae$ and $A(1-e)$ are Poisson algebras with Poisson bracket inherited from $A$.

Let $A$ be a Poisson algebra and let $\alpha$ be a Poisson derivation of $A$. A linear map $\delta:A\to A$ is called a {\sf Poisson $\alpha$-derivation} if it satisfies
\begin{enumerate}
\item $\delta(ab)=\delta(a)b+a\delta(b)$;
\item $\delta(\{a,b\})=\{\delta(a),b\}+\{a,\delta(b)\}+\alpha(a)\delta(b)-\delta(a)\alpha(b)$,
\end{enumerate}
for all $a,b\in A$. Let $\delta$ be a Poisson $\alpha$-derivation of $A$. The {\sf Poisson-Ore extension} $A[z;\alpha,\delta]_P$ is the polynomial ring $A[z]$ with Poisson bracket
\[ 
\{a,b\} = \{a,b\}_A, \qquad
\{z,a\} = \alpha(a)z +\delta(a)\quad \text{for all } a,b \in A.
\]
For simplicity, we write $A[t]=A[t;0,0]_P$ and $A[t_1,\dots,t_n]=A[t_1]\cdots[t_n]$ for any $n\ge 1$.

\subsection{Filtered and graded Poisson algebras}
An {\sf ascending Poisson $\NN$-filtration} $F$ on a Poisson algebra $A$ is a collection of subspaces $\{F_i A\}_{i \geq 0}$ satisfying
\begin{enumerate}
    \item $F_i A \subseteq F_{i+1} A$,
    \item $\bigcup_{i\geq 0} F_i A = A$,
    \item $F_i A \cdot F_j A \subseteq F_{i+j} A$, and 
    \item $\{F_i A, F_j A\} \subseteq F_{i+j} A$, for all $i, j \geq 0$.
\end{enumerate}
If $A$ is a Poisson algebra with a Poisson $\NN$-filtration $F$, then the {\sf associated graded Poisson algebra $\gr_F(A)$} is defined as
\[ \gr_F(A) := \bigoplus_{m \geq 0} F_m A/ F_{m-1} A\]
with $F_{-1}(A)=0$.
We drop the subscript if the filtration is implied. Similarly, we can define a {\sf descending Poisson $\NN$-filtration} $F$ on a Poisson algebra $A$.

\begin{example}
(1) Let $A$ be the first Poisson Weyl algebra.
That is, $A=\kk[x,y]$ with Poisson bracket $\{x,y\}=1$.
Set the ascending filtration $F$ on $A$ by defining $x$ and $y$ to have degree 1
with $F_n A$ being the span of all monomials with degree at most $n$.
Then $\gr_F A$ is the polynomial algebra $\kk[x,y]$ with trivial Poisson bracket.

(2) Any Poisson ideal $I$ of a Poisson algebra $A$ defines a descending $I$-adic Poisson $\NN$-filtration on $A$ by setting $F_i A = I^i$ for all $n\ge 0$ ($I^0=A$). We denote the corresponding associated graded Poisson algebra by $\gr_I(A)$.
\end{example}

Let $A$ be a Poisson algebra. When $\gr_F A=A$ for some Poisson $\NN$-filtration $F$, then $A$ is said to be {\sf $\NN$-graded} and in this case we generally write $A_i$ for $F_i A/F_{i-1}A$. Additionally, if $F_0A=\kk$ we say $A$ is {\sf connected graded} and we say $A$ is {\sf generated in degree one} if $A$ is generated by $A_1$ as an $\kk$-algebra. 

While every Poisson filtration on a Poisson algebra $A$ naturally restricts to a filtration on the underlying associative algebra, not every algebra filtration $F$ on a Poisson algebra extends to a Poisson filtration.

\section{Cancellation properties for Poisson algebras}
\label{sec.zariski}
The main goal of our paper is to study the Zariski cancellation problem for Poisson algebras (abbreviated as \textbf{PZCP}). It can be thought of as a natural extension 
of the Zariski cancellation problem for (noncommutative) algebras (abbreviated as \textbf{ZCP}) \cite{BZ2,LWZ1,LWZ2}. We regard all isomorphisms as isomorphisms of Poisson algebras unless otherwise noted.

\begin{definition}\label{def.can}
A Poisson algebra $A$ is {\sf universally Poisson cancellative} if $A \tensor R \iso B \tensor R$ implies $A \iso B$ for every Poisson algebra $B$ and every finitely generated commutative domain $R$ over $\kk$ with trivial Poisson bracket such that the natural map $\kk \rightarrow R \rightarrow R/I$ is an isomorphism for some Poisson ideal $I \subset R$.
In the special case that $R=\kk[t]$ (resp. $\kk[t_1,\hdots,t_n]$ for all $n \geq 1$), we say $A$ is {\sf Poisson cancellative} (resp. {\sf strongly Poisson cancellative}).
\end{definition}

Our first concern is to construct examples of non-cancellative Poisson algebras with nontrivial Poisson bracket. The following lemma provides us an easy way of producing non-cancellative Poisson algebras from non-cancellative commutative algebras. 

\begin{lemma}\label{lemm.noncan}
Let $A$ be a commutative algebra that is not (strongly/universally) cancellative. Let $B$ be a Poisson algebra with trivial Poisson center. 
Then $A\otimes B$ is not (strongly/universally) Poisson cancellative. 
\end{lemma}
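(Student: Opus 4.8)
The claim is that if $A$ is a commutative algebra that fails to be (strongly/universally) cancellative, and $B$ is a Poisson algebra with $\pcnt(B) = \kk$, then $A \tensor B$ fails to be (strongly/universally) Poisson cancellative. The natural strategy is to transport a commutative non-cancellation witness through the tensor functor $- \tensor B$ and then use the triviality of $\pcnt(B)$ together with Lemma \ref{lem.cnt} to cancel the $B$-factor from any Poisson isomorphism. So first I would unpack the hypothesis: there is a commutative algebra $C$ with $C \niso A$ (as commutative algebras) but $A \tensor R \iso C \tensor R$ (as commutative algebras) for $R = \kk[t]$ in the plain case, $R = \kk[t_1,\dots,t_n]$ in the strong case, or $R$ one of the domains of Definition \ref{def.can} in the universal case — where we regard $R$ as having trivial Poisson bracket and $C$ as having trivial Poisson bracket as well (this is the point where we must be a little careful: $C$ a priori has no Poisson structure, so we equip it with the zero bracket, and likewise the isomorphism $A\tensor R \iso C\tensor R$ is just an algebra isomorphism, not yet a Poisson one).

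Second, I would observe that tensoring with $B$ (over $\kk$, using \eqref{eq.cext}) is functorial for \emph{commutative-algebra} isomorphisms in the sense that an algebra isomorphism $f: A\tensor R \to C\tensor R$ induces an algebra isomorphism $f \tensor \id_B$ after suitable reassociation $(A\tensor R)\tensor B \iso (A\tensor B)\tensor R$, and — crucially — when $R$ has trivial Poisson bracket this reassociated map is in fact a \emph{Poisson} isomorphism $(A\tensor B)\tensor R \to (C\tensor B)\tensor R$, because the bracket on $A\tensor B$ lives entirely in the $B$-slots and on $R$ it is zero, so $f\tensor\id_B$ respects \eqref{eq.cext} for formal reasons. (Here I'd use $R\tensor B \iso B\tensor R$ with the $B$ carrying the whole bracket.) Hence $A\tensor B$ is a Poisson algebra with $(A\tensor B)\tensor R \iso (C\tensor B)\tensor R$ as Poisson algebras, and $C\tensor B$ is another Poisson algebra; so to conclude that $A\tensor B$ is not Poisson cancellative it suffices to show $A\tensor B \niso C\tensor B$ as Poisson algebras.

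Third — and this is the heart of the argument — I would prove the \emph{cancellation of the $B$-factor}: if $A\tensor B \iso C\tensor B$ as Poisson algebras with $\pcnt(B)=\kk$, then $A \iso C$ as algebras, contradicting $A\niso C$. Since $A$ and $C$ carry trivial brackets, Lemma \ref{lem.cnt} gives $\pcnt(A\tensor B) = A \tensor \pcnt(B) = A\tensor\kk = A$ and similarly $\pcnt(C\tensor B) = C$. Any Poisson isomorphism $A\tensor B \to C\tensor B$ carries Poisson center onto Poisson center, hence restricts to an algebra isomorphism $A \iso C$. This is the step I expect to be the only real content; everything else is bookkeeping with associativity of tensor products and the definition of the Poisson structure \eqref{eq.cext}.

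The main obstacle, as I see it, is not mathematical depth but getting the three parallel cases ((strongly)/(universally)) uniform and making sure the domain $R$ appearing in the commutative non-cancellation witness is legitimately one of the domains allowed in Definition \ref{def.can} — in particular that $\kk[t]$ and $\kk[t_1,\dots,t_n]$ satisfy the augmentation condition ``$\kk \to R \to R/I$ is an isomorphism for some Poisson ideal $I$'', which they do via the augmentation ideal (trivially a Poisson ideal since the bracket on $R$ is zero). One should also double-check that a commutative algebra which is not cancellative in the usual (commutative) sense produces a witness $C$ that is a \emph{commutative domain} or at least an algebra to which $-\tensor B$ can be applied without issue — but since we only need an algebra isomorphism $A\tensor R\iso C\tensor R$ and an algebra non-isomorphism $A\niso C$, no domain hypothesis on $C$ is needed, and $R$'s domain condition is inherited from the commutative setup. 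With those points settled the proof is essentially: apply $-\tensor B$, use associativity of $\tensor$ and triviality of the bracket on $R$ to upgrade to a Poisson isomorphism, then extract $A\iso C$ from the Poisson centers via Lemma \ref{lem.cnt}.
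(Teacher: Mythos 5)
Your proposal is correct and follows essentially the same route as the paper: tensor the commutative non-cancellation witness $A\otimes R\cong C\otimes R$ with $B$ to get a Poisson isomorphism $(A\otimes B)\otimes R\cong (C\otimes B)\otimes R$, then use Lemma \ref{lem.cnt} and $\pcnt(B)=\kk$ to identify $\pcnt(A\otimes B)=A$ and $\pcnt(C\otimes B)=C$, so that a Poisson isomorphism $A\otimes B\cong C\otimes B$ would force $A\cong C$, a contradiction. Your extra care about equipping $C$ with the zero bracket, the reassociation of tensor factors, and the admissibility of $R$ in Definition \ref{def.can} is sound bookkeeping that the paper leaves implicit.
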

\begin{proof}
We only prove the result for the universally Poisson cancellative case. The cancellative and strongly cancellative cases can be proved similarly. Since $A$ is not universally cancellative, there exist another commutative algebra $C \niso A$ and an affine commutative domain $R$ together with an ideal $I\subset R$ such that $\kk\to R\to R/I$ is an isomorphism such that $A\otimes R\cong C\otimes R$.
As a consequence, $(A\otimes B)\otimes R\cong (C\otimes B)\otimes R$. If $A\otimes B$ were universally Poisson cancellative, it would imply that $A\otimes B\cong B\otimes C$ and hence 
\[ A\cong A\otimes \kk\cong  A\otimes \pcnt(B)\cong \pcnt(A\otimes B)\cong \pcnt(C\otimes B)\cong C\otimes \pcnt(B)\cong C\otimes \kk \cong C,\]
a contradiction. So $A\otimes B$ is not universally Poisson cancellative.
\end{proof}

\begin{example}\label{nonZCP}
The following examples of commutative algebras are known to be non-cancellative.
\begin{enumerate}
    \item Hochster showed that the coordinate ring of the real sphere $\RR[x,y,z]/(x^2+y^2+z^2-1)$ is not cancellative \cite{Ho}. 
    \item Let $n\ge 1$ and let $B_n$ be the coordinate ring of the complex surface $x^ny=z^2-1$. Danielewski proved that $B_n$ is not cancellative. More precisely, $B_i[t]\cong B_j[s]$ but $B_i\not \cong B_j$ for all $i\neq j\ge 1$ \cite{Da}.
    \item Gupta showed that the polynomial ring $\kk[x_1,\dots,x_n]$ is not cancellative whenever $n\ge 3$ and $\text{char}(\kk)>0$ \cite{gupta1,gupta2}.
\end{enumerate}
Now by Lemma \ref{lemm.noncan}, we can take tensor products of the non-cancellative commutative algebras listed above with Poisson algebras with trivial Poisson center (see Corollary \ref{cor.cancel1} and Example \ref{ex.cancel1}) to produce non-cancellative Poisson algebras.
\end{example}

The next lemma is useful in cancellation problems. The proofs are standard.
\begin{lemma}\label{lemm.invariso}
Let $A$ and $B$ be two Poisson algebras such that $A[t_1,\dots,t_n]\cong B[s_1,\dots,s_n]$ for some integer $n\ge 1$. Then the following hold.
\begin{enumerate}
    \item $A$ has trivial Poisson bracket if and only if $B$ does.
    \item $A$ is noetherian if and only if $B$ is.
    \item $\Kdim A=\Kdim B$.
    \item $A$ is artinian if and only if $B$ is.
    \item $A$ is a field if and only if $B$ is.
    \item $A$ is a finite direct sum of fields if and only if $B$ is.
    \item $A$ is local artinian if and only if $B$ is. 
\end{enumerate}
\end{lemma}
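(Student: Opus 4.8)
The plan is to prove each statement by transferring the relevant property across the isomorphism $\phi\colon A[t_1,\dots,t_n]\xrightarrow{\sim} B[s_1,\dots,s_n]$, using that each property is preserved under passing to and from a polynomial extension. By symmetry it suffices in each case to show that if $A[t_1,\dots,t_n]$ has the property in question (as a Poisson algebra), then $B$ does; and since $B[s_1,\dots,s_n]\cong A[t_1,\dots,t_n]$, this reduces to two independent claims for a single Poisson algebra $A$: (a) $A$ has the property iff $A[t]$ does, and then iterate. So the real content is a list of standard facts about how each property behaves under the Poisson-Ore extension $A\mapsto A[t]=A[t;0,0]_P$.

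For (1): the Poisson bracket on $A[t]$ restricts to that of $A$ and has $\{t,A\}=0$, so $A$ has trivial bracket iff $A[t]$ does; this is immediate from \eqref{eq.cext}. For (2), (3): noetherianity and Krull dimension are properties of the underlying commutative ring, and the isomorphism $\phi$ is in particular an algebra isomorphism, so these follow from the classical Hilbert basis theorem and the identity $\Kdim A[t]=\Kdim A+1$ (here one needs $A$ noetherian, or at least finite Krull dimension; if $\Kdim A=\infty$ both sides are $\infty$), hence $\Kdim A[t_1,\dots,t_n]=\Kdim A+n$ and the $n$'s cancel. For (4): an artinian ring has Krull dimension zero, while $A[t]$ always has Krull dimension $\geq 1$ when $A\neq 0$; so $A[t_1,\dots,t_n]$ is never artinian for $n\geq 1$ unless it is zero — wait, this needs care. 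Actually the correct reading is: $A$ is artinian iff $B$ is, proved by noting $A$ artinian $\Leftrightarrow$ $A$ noetherian and $\Kdim A=0$, and both noetherianity and $\Kdim$ are controlled by (2) and (3) applied to $A$ and $B$ directly. For (5): $A$ is a field iff $A$ is a domain with $\Kdim A=0$; being a domain is equivalent to $A[t_1,\dots,t_n]$ being a domain (a polynomial ring over $A$ is a domain iff $A$ is), and $\Kdim$ transfers by (3), so $A$ is a field iff $A[t_1,\dots,t_n]$ is a domain of Krull dimension $n$, a condition visibly symmetric in $A$ and $B$. For (6): a finite direct sum of fields is the same as a reduced artinian ring (equivalently, artinian with zero Jacobson radical, equivalently von Neumann regular and noetherian); reducedness transfers because $A$ is reduced iff $A[t_1,\dots,t_n]$ is reduced, and artinianity transfers by (4). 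For (7): local artinian means artinian with a unique maximal ideal, equivalently artinian with no nontrivial idempotents; idempotents of $A$ and of $A[t_1,\dots,t_n]$ coincide (a polynomial ring over $A$ has the same idempotents as $A$), so this combines (4) with the idempotent observation.

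The one point deserving a word of care — and the closest thing to an obstacle — is that all of these are statements about Poisson algebras, so one should check that $\phi$ being a \emph{Poisson} isomorphism does not obstruct the arguments; but in fact every property in the list except (1) is a property of the underlying commutative algebra alone, for which it suffices that $\phi$ be an algebra isomorphism, and property (1) is handled directly by the bracket formula. Thus I would simply remark at the outset that $\phi$ restricts to an algebra isomorphism $A[t_1,\dots,t_n]\cong B[s_1,\dots,s_n]$, reduce (2)--(7) to the cited commutative-algebra facts about polynomial extensions (Hilbert basis theorem, $\Kdim A[t]=\Kdim A+1$, and the elementary facts that $A[t]$ is a domain / reduced / has the same idempotents as $A$ iff the corresponding statement holds for $A$), and dispatch (1) from \eqref{eq.cext}. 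Given that the authors themselves say ``the proofs are standard,'' a brief paragraph collecting these reductions is the appropriate level of detail.
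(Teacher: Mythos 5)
The paper offers no proof of this lemma beyond the remark that ``the proofs are standard,'' and your write-up supplies exactly the standard argument the authors have in mind: each property is an isomorphism invariant and passes between $R$ and $R[t]$, so one transfers it along $A \to A[t_1,\dots,t_n] \cong B[s_1,\dots,s_n] \to B$. Your reductions for (1), (2), (4)--(7) are all correct (Hilbert basis theorem; Akizuki--Hopkins; $R[t]$ is a domain, resp.\ reduced, resp.\ has the same idempotents, iff $R$ is/does). The one inaccuracy is in (3): the identity $\Kdim A[t]=\Kdim A+1$ does \emph{not} hold for arbitrary commutative rings of finite Krull dimension --- by Seidenberg's examples $\Kdim A[t]$ can be anything between $\Kdim A+1$ and $2\Kdim A+1$ --- so your parenthetical ``or at least finite Krull dimension'' is not a valid weakening; the formula requires $A$ to be noetherian (or more generally Jaffard). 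The clean fix is to invoke (2) first: if either algebra is noetherian both are, and then the dimension formula applies to both sides; in the non-noetherian, non-Jaffard case your argument (and, implicitly, the paper's) only yields inequalities, though every application of (3) in the paper is to affine algebras, where this is moot. Finally, the ``wait, this needs care'' false start in (4) should be deleted; the corrected version via (2) and (3) is the right argument.
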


The notion of detectability was introduced in \cite[Definition 3.1]{LWZ1} to deal with the \textbf{ZCP} for noncommutative algebras. The notion of retractability is due to Abhyankar, Eakin, and Heinzer \cite[p. 311]{AEH}, called \emph{invariance} at the time, and was also meant to handle the \textbf{ZCP}. It was later called \emph{retractability} by Lezama, Wang, and Zhang in \cite[Definition 2.1]{LWZ1}.
We adapt both of these for Poisson algebras here.

\begin{definition}
Let $A$ be a Poisson algebra with Poisson center $\pcnt=\pcnt(A)$.
\begin{enumerate}
\item We say that $A$ is {\sf strongly Poisson detectable} (resp. {\sf strongly $\pcnt$-detectable}) if, for any Poisson algebra $B$ and any integer $n \geq 1$, a Poisson algebra isomorphism $\phi:A[t_1,\hdots,t_n]\xrightarrow{\sim} B[s_1,\hdots,s_n]$ implies that $B[s_1,\hdots,s_n] = B[\phi(t_1),\hdots,\phi(t_n)]$
(resp. $\pcnt(A)[s_1,\hdots,s_n] = \pcnt(B)[\phi(t_1),\hdots,\phi(t_n)]$), 
    
\item We say that $A$ is {\sf strongly Poisson retractable} (resp. {\sf strongly $\pcnt$-retractable}) if, for any Poisson algebra $B$ and integer $n \geq 1$, any Poisson algebra isomorphism $\phi:A[t_1,\hdots,t_n] \xrightarrow{\sim} B[s_1,\hdots,s_n]$ implies that $\phi(A)=B$ (resp. $\phi(\pcnt(A)) = \pcnt(B)$).
\end{enumerate}
If either holds only when $n=1$, then we say $A$ is simply {\sf Poisson detectable} (resp. {\sf$\pcnt$-detectable}) or {\sf Poisson retractable} (resp. {\sf $\pcnt$-retractable}).
\end{definition}

The condition $B[s_1,\hdots,s_n] = B[\phi(t_1),\hdots,\phi(t_n)]$ is equivalent to $s_i \in B[\phi(t_1),\hdots,\phi(t_n)]$ for all $i$.

The following observation is clear.

\begin{lemma}\label{lemm.retrdc}
Let $A$ be a Poisson algebra. 
\begin{enumerate}
\item If $A$ is (strongly) Poisson retractable, then $A$ is (strongly) Poisson detectable, (strongly) $\pcnt$-retractable, (strongly) $\pcnt$-detectable, and (strongly) Poisson cancellative.
\item If $A$ is (strongly) $\pcnt$-retractable, then $A$ is (strongly) $\pcnt$-detectable.
\item If $A$ is (strongly) $\pcnt$-detectable, then $A$ is (strongly) Poisson detectable. 
\item If $\pcnt$ is (strongly) retractable as an algebra, then $A$ is (strongly) $\pcnt$-retractable.
\item If $\pcnt$ is (strongly) detectable as an algebra, then $A$ is (strongly) $\pcnt$-detectable.
\end{enumerate}
\end{lemma}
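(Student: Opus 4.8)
\textbf{Proof proposal for Lemma \ref{lemm.retrdc}.}

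The plan is to verify each of the five implications directly from the definitions, since each is essentially a matter of unwinding the notions of (strong) Poisson retractability, detectability, and their $\pcnt$-analogues. I will treat the non-strong case; the strong case is identical with $n=1$ replaced by arbitrary $n\ge 1$. Throughout, fix a Poisson algebra $B$, an integer $n\ge 1$, and a Poisson isomorphism $\phi\colon A[t_1,\dots,t_n]\xrightarrow{\sim} B[s_1,\dots,s_n]$.

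For (1), assume $A$ is Poisson retractable, so $\phi(A)=B$. Applying $\phi$ to the equality $A[t_1,\dots,t_n]=A[\phi\inv(s_1),\dots,\phi\inv(s_n)]$ — which holds because $\phi\inv(s_i)\in A[t_1,\dots,t_n]$ and the $\phi\inv(s_i)$ together with $\phi(A)=B$'s preimage generate everything — one gets $B[s_1,\dots,s_n]=\phi(A)[\phi(t_1),\dots,\phi(t_n)]=B[\phi(t_1),\dots,\phi(t_n)]$, which is Poisson detectability. Since a Poisson isomorphism restricts to an isomorphism of Poisson centers, $\phi(\pcnt(A))=\pcnt(\phi(A))\cdot$ inside $B[s_1,\dots,s_n]$; more precisely $\phi(\pcnt(A[t_1,\dots,t_n]))=\pcnt(B[s_1,\dots,s_n])$, and by Lemma \ref{lem.cnt} the left side is $\phi(\pcnt(A)[t_1,\dots,t_n])=\phi(\pcnt(A))[\phi(t_1),\dots,\phi(t_n)]$ while the right side is $\pcnt(B)[s_1,\dots,s_n]$. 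Combining with $\phi(A)=B$ gives both $\phi(\pcnt(A))=\pcnt(B)$ (that is, $\pcnt$-retractability) and $\pcnt(A)[s_1,\dots,s_n]=\pcnt(B)[\phi(t_1),\dots,\phi(t_n)]$ ($\pcnt$-detectability); Poisson cancellativity then follows since $\phi(A)=B$ forces $A\iso B$. For (2), from $\phi(\pcnt(A))=\pcnt(B)$ the same centralizing computation via Lemma \ref{lem.cnt} yields $\pcnt(A)[s_1,\dots,s_n]=\pcnt(B)[\phi(t_1),\dots,\phi(t_n)]$. For (3), $\pcnt$-detectability gives $\pcnt(A)[s_1,\dots,s_n]=\pcnt(B)[\phi(t_1),\dots,\phi(t_n)]$; since $\pcnt(B)[\phi(t_1),\dots,\phi(t_n)]\subseteq B[\phi(t_1),\dots,\phi(t_n)]$ and $s_i$ lies in the left-hand side, we get $s_i\in B[\phi(t_1),\dots,\phi(t_n)]$ for all $i$, which by the remark just before the lemma is exactly $B[s_1,\dots,s_n]=B[\phi(t_1),\dots,\phi(t_n)]$, i.e.\ Poisson detectability. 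For (4) and (5), if $\pcnt(A)$ is retractable (resp.\ detectable) as a commutative algebra, apply that property to the algebra isomorphism $\phi\colon \pcnt(A)[t_1,\dots,t_n]\xrightarrow{\sim}\pcnt(B)[s_1,\dots,s_n]$ — valid because, by Lemma \ref{lem.cnt}, $\phi$ carries $\pcnt(A[t_1,\dots,t_n])=\pcnt(A)[t_1,\dots,t_n]$ isomorphically onto $\pcnt(B[s_1,\dots,s_n])=\pcnt(B)[s_1,\dots,s_n]$ — to conclude $\phi(\pcnt(A))=\pcnt(B)$ (resp.\ the detectability equation), which is $\pcnt$-retractability (resp.\ $\pcnt$-detectability) of $A$.

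The only mild subtlety — and the step I would be most careful about — is the repeated use of Lemma \ref{lem.cnt} to identify $\pcnt(A[t_1,\dots,t_n])$ with $\pcnt(A)[t_1,\dots,t_n]$ (and likewise for $B$), together with the observation that $\phi$, being a Poisson isomorphism, must send Poisson centers to Poisson centers; once these identifications are in place, every implication is a short formal manipulation. No genuine obstacle arises, which is why the paper simply notes that "the following observation is clear" and omits the proof; the content is purely bookkeeping among the defined notions.
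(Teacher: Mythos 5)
Your proposal is correct and follows essentially the same route as the paper: the paper proves only part (3) --- using exactly your argument that $\phi$ restricts to an isomorphism $\pcnt(A)[t_1,\dots,t_n]\cong\pcnt(B)[s_1,\dots,s_n]$ via Lemma \ref{lem.cnt}, so $\pcnt$-detectability places each $s_i$ in $\pcnt(B)[\phi(t_1),\dots,\phi(t_n)]\subseteq B[\phi(t_1),\dots,\phi(t_n)]$ --- and dismisses the remaining parts as easy, which your bookkeeping for (1), (2), (4), and (5) confirms.
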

\begin{proof}
Here we only show (3). All of the remaining cases are verified easily. Suppose that $\phi: A[t_1,\dots,t_n]\xrightarrow{\sim} B[s_1,\dots,s_n]$ is a Poisson isomorphism for some Poisson algebra $B$ and some $n\ge 1$. Then $\phi$ induces an isomorphism between their Poisson centers:
\[
\phi: \pcnt(A)[t_1,\dots,t_n]\cong \pcnt(A[t_1,\dots,t_n])\xrightarrow{\sim} \pcnt(B[s_1,\dots,s_n])\cong\pcnt(B)[s_1,\dots,s_n].
\]
Thus $s_1,\dots,s_n\in \pcnt(B)[\phi(t_1),\dots,\phi(t_n)]\subseteq B[\phi(t_1),\dots,\phi(t_n)]$ because $A$ is (strongly) $\pcnt$-detectable. Hence $A$ is (strongly) Poisson detectable. 
\end{proof}

\begin{lemma}\label{lemm.directprop}
The following properties of Poisson algebras are preserved under finite direct sums: 
\begin{enumerate}
    \item being (strongly) Poisson cancellative,
    \item being (strongly) Poisson retractable,
    \item being (strongly) $\pcnt$-retractable,
    \item being (strongly) Poisson dectectable, and
    \item being (strongly) $\pcnt$-detectable.
\end{enumerate}
\end{lemma}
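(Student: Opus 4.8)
\textbf{Proof proposal for Lemma \ref{lemm.directprop}.}

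The plan is to prove all five statements simultaneously by a single structural argument, treating both the non-strong and strong versions uniformly (the strong version just carries along auxiliary indeterminates $t_1,\dots,t_n$ and $s_1,\dots,s_n$, which play a passive role). By induction it suffices to handle a direct sum of two Poisson algebras $A = A_1 \oplus A_2$. The key technical fact I would isolate first is that idempotents, and hence direct-sum decompositions, are preserved under polynomial extensions and detected functorially: if $C$ is a Poisson algebra then the idempotents of $C[s_1,\dots,s_n]$ are exactly the idempotents of $C$ (a polynomial over a commutative ring has idempotent constant term and higher terms forced to vanish), and they are central for the Poisson bracket in the sense that $\{e,-\}$ restricted through $e$ and $1-e$ behaves well because $\{e,e\}=0$ forces $\{e,a\} = \{e, ea\} + \{e,(1-e)a\}$ to respect the decomposition. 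Thus any Poisson isomorphism $\phi: (A_1 \oplus A_2)[t_1,\dots,t_n] \xrightarrow{\sim} B[s_1,\dots,s_n]$ must carry the two orthogonal idempotents $e_1 \oplus 0$ and $0 \oplus e_2$ to a pair of orthogonal idempotents summing to $1$ in $B[s_1,\dots,s_n] = B$, hence to a decomposition $B = B_1 \oplus B_2$, and $\phi$ splits as a direct sum of Poisson isomorphisms $\phi_j: A_j[t_1,\dots,t_n] \xrightarrow{\sim} B_j[s_1,\dots,s_n]$ (here one uses that $B_j[s_1,\dots,s_n] = (Be_j)[s_1,\dots,s_n]$, with the Poisson structure inherited as in the discussion around \eqref{eq.dext}).

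With this decomposition in hand, each of the five properties follows formally. For (1): if each $A_j$ is (strongly) Poisson cancellative, then from $\phi_j$ we get $A_j \iso B_j$ as Poisson algebras, whence $A_1 \oplus A_2 \iso B_1 \oplus B_2 = B$. For (2): given $\phi$ retractable on each factor, $\phi_j(A_j) = B_j$, so $\phi(A_1 \oplus A_2) = \phi_1(A_1) \oplus \phi_2(A_2) = B_1 \oplus B_2 = B$. For (3): using $\pcnt(A_1 \oplus A_2) = \pcnt(A_1) \oplus \pcnt(A_2)$ — immediate from \eqref{eq.dext}, since a bracket vanishing against all of $A$ is equivalent to each component vanishing against all of the corresponding $A_j$ — and applying $\pcnt$-retractability of each $A_j$ to $\phi_j$, one gets $\phi(\pcnt(A)) = \pcnt(B_1) \oplus \pcnt(B_2) = \pcnt(B)$. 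Statements (4) and (5) are the detectability analogues, obtained the same way: the condition $s_i \in B[\phi(t_1),\dots,\phi(t_n)]$ (resp. $s_i \in \pcnt(B)[\phi(t_1),\dots,\phi(t_n)]$) can be checked componentwise after applying the idempotents $\phi(e_j)$, and it holds in each component by the corresponding property of $A_j$; note $\phi(t_i) = \phi_1(t_i) \oplus \phi_2(t_i)$, so the subalgebra generated behaves correctly under the decomposition.

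The main obstacle — really the only place where care is needed — is verifying that the splitting of $\phi$ is compatible with the Poisson brackets and not merely with the associative algebra structure: one must check that the decomposition $B = B_1 \oplus B_2$ induced by $\phi(e_1), \phi(e_2)$ is a decomposition of Poisson algebras in the sense of \eqref{eq.dext}, i.e.\ that $\{B_1, B_2\} = 0$ and that the bracket on each $B_j$ is the restriction. This comes down to the observation that an orthogonal idempotent decomposition of any Poisson algebra $C = Ce \oplus C(1-e)$ automatically satisfies $\{Ce, C(1-e)\} = 0$: for $a \in Ce$, $b \in C(1-e)$ we have $\{a,b\} = \{ea, b\} = \{e,b\}a + e\{a,b\}$, and multiplying by $e$ and by $(1-e)$ and using $\{e, b\} = \{e, eb\}$ together with the derivation property pins down $\{a,b\} = e\{a,b\}$; a symmetric computation with $b = (1-e)b$ gives $\{a,b\} = (1-e)\{a,b\}$, forcing $\{a,b\} = 0$. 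Granting this (which I would state as a short preliminary sublemma, or fold into the remark already made in the excerpt after \eqref{eq.dext} about $Ae$ and $A(1-e)$ being Poisson subalgebras), everything else is bookkeeping, and the induction on the number of summands closes the proof.
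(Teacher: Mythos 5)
Your proposal follows essentially the same route as the paper: transport the orthogonal idempotents of $A_1\oplus A_2$ through $\phi$, observe they land in $B$, split $\phi$ into componentwise isomorphisms $\phi_j$, and apply the hypothesis on each factor; the paper proves only case (1) this way and declares (2)--(5) similar, while you spell out the remaining cases and the compatibility of the bracket with the decomposition (which the paper dismisses as easy, and which is cleanest via $\{e,a\}=\{e^2,a\}=2e\{e,a\}$ and the invertibility of $1-2e$, showing idempotents are Poisson central). The argument is correct and matches the paper's.
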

\begin{proof}
We only prove (1) and (2)-(5) can be proved similarly. Let $A$ and $B$ be Poisson algebras that are (strongly) Poisson cancellative and let $\phi: (A\bigoplus B)[t_1,\dots,t_n]\xrightarrow{\sim} C[s_1,\dots,s_n]$ be an isomorphism for some Poisson algebra $C$ and some $n\ge 1$. Let $e_1,e_2$ be the two orthogonal idempotents corresponding to the decomposition $A\bigoplus B$. It is easy to check that $\phi(e_1)$ and $\phi(e_2)$ are two orthogonal idempotents of $C[s_1,\dots,s_n]$, which are indeed orthogonal idempotents of $C$. Write the corresponding decomposition of $C$ as $C_1\bigoplus C_2$. So, $\phi$ induces two isomorphisms $\phi_1: A[t_1,\dots,t_n]\xrightarrow{\sim} C_1[s_1,\dots,s_n]$ and $\phi_2:B[t_1,\dots,t_n]\xrightarrow{\sim} C_2[s_1,\dots,s_n]$. Since $A$ and $B$ are (strongly) Poisson cancellative, we have $A\cong C_1$, $B\cong C_2$, and $A\bigoplus B\cong C_1\bigoplus C_2\cong C$. Thus, $A\bigoplus B$ is (strongly) Poisson cancellative.
\end{proof}

\section{Graded versus ungraded}
\label{sec.graded}
In this section, we deal with the \textbf{PZCP} related to connected graded Poisson algebras following the ideas in \cite{BZ1}. To do this, we study the isomorphism problem for graded Poisson algebras. There has been significant progress in studying this problem in a variety of noncommutative situations \cite{BJ,BZ1,Giso,Giso2,GH,LY}. By \cite[Theorem 1]{BZ1}, an (ungraded) isomorphism between two connected graded algebras finitely generated in degree one implies the existence of a graded isomorphism. We prove a corresponding result in the Poisson setting. 

The following lemma is extracted from \cite[Lemma 1.1]{BZ1}. We keep its proof for the sake of completeness.

\begin{lemma}
\label{lem.igp}
Let $A$ and $B$ be two connected graded algebras that are generated in degree one. Suppose $\dim A_1<\infty$. If $\phi: A\to B$ is an isomorphism as ungraded algebras, then $\dim B_1=\dim A_1$.  
\end{lemma}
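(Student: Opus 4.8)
The plan is to reduce the claim to an ungraded invariant of $A$, namely the minimal cardinality of a set of $\kk$-algebra generators of $A$. Concretely, I would show that for every connected graded $\kk$-algebra $C$ that is generated in degree one with $\dim C_1<\infty$, this minimal number equals $\dim C_1$. Granting this, and since the minimal number of $\kk$-algebra generators is manifestly preserved by any isomorphism of ungraded $\kk$-algebras, applying the characterization to both $A$ and $B$ and feeding in $\phi$ yields $\dim A_1=\dim B_1$ at once.

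For the characterization, one inequality is immediate: because $C$ is \emph{connected} (so $C_0=\kk$) and \emph{generated in degree one}, any $\kk$-basis of $C_1$ generates $C$ as a $\kk$-algebra, so the minimal number of generators is at most $\dim C_1$. For the reverse inequality, take $\kk$-algebra generators $c_1,\dots,c_k$ of $C$ and, using $C=\kk\oplus C_1\oplus C_2\oplus\cdots$, write $c_i=\lambda_i+c_{i,1}+c_i''$ with $\lambda_i\in\kk$, $c_{i,1}\in C_1$, and $c_i''\in\bigoplus_{j\geq 2}C_j$. Given $b\in C_1$, express $b$ as a $\kk$-linear combination of words in $c_1,\dots,c_k$ and compare degree-one homogeneous components: the constant term contributes nothing in degree one, a word of length at least two in elements of $\kk\oplus C_{\geq 1}$ lies in $C_{\geq 2}$, and a length-one word $\lambda c_i$ contributes $\lambda c_{i,1}$. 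Hence $b\in\Span_\kk\{c_{1,1},\dots,c_{k,1}\}$, so $k\geq\dim C_1$; together with the easy inequality this gives equality, and finiteness of this number follows from $\dim A_1<\infty$.

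Since $A$ and $B$ both satisfy the hypotheses of the characterization, transporting a minimal $\kk$-algebra generating set of $A$ through $\phi$ and one of $B$ through $\phi^{-1}$ shows that $A$ and $B$ have the same minimal number of $\kk$-algebra generators, so $\dim A_1=\dim B_1$. I do not expect a genuine obstacle here; the one step to handle carefully is the degree comparison in the reverse inequality, where the hypothesis $C_0=\kk$ is used to absorb the constant terms $\lambda_i$ and one invokes that a product of two or more elements of the augmentation ideal $C_{\geq 1}$ lands in $C_{\geq 2}$ — this is exactly where the two standing hypotheses, \emph{connected} and \emph{generated in degree one}, are used.
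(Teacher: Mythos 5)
Your proof is correct, but it takes a different route from the paper's. You reduce $\dim A_1$ to the minimal number of $\kk$-algebra generators, an invariant that is manifestly preserved by ungraded isomorphisms, and then verify that this number equals $\dim C_1$ for any connected graded algebra $C$ generated in degree one. The paper instead considers, for each codimension~$1$ ideal $I$ of a connected graded algebra $R$, the tangent dimension $\dim I/I^2$; it observes that the augmentation ideal $R_{\ge 1}$ realizes the maximum of this quantity over all codimension~$1$ ideals, that $\dim A_{\ge 1}/(A_{\ge 1})^2=\dim A_1$ when $A$ is generated in degree one, and that an ungraded isomorphism induces a bijection on codimension~$1$ ideals preserving tangent dimensions. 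Both arguments identify $\dim A_1$ with an ungraded invariant; yours avoids ideals entirely at the cost of the explicit degree-one bookkeeping, while the paper's needs the maximality observation precisely because the augmentation ideal itself is not preserved by an ungraded isomorphism. The two are close in spirit (generator counts versus $\dim \fm/\fm^2$ are Nakayama-dual notions), but the verifications are genuinely different.

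One wording issue, which is not a gap but should be fixed in a final write-up: the claim that ``a word of length at least two in elements of $\kk\oplus C_{\ge 1}$ lies in $C_{\ge 2}$'' is false as literally stated, since for example $c_ic_j$ has degree-one component $\lambda_i c_{j,1}+\lambda_j c_{i,1}$ when the constant terms $\lambda_i,\lambda_j$ are nonzero. Either replace each generator $c_i$ by $c_i-\lambda_i$ first (harmless, since $C_0=\kk$), after which words of length $\ge 2$ in elements of the augmentation ideal do land in $C_{\ge 2}$, or note directly that the degree-one component of any word $c_{i_1}\cdots c_{i_m}$ is $\sum_j \bigl(\prod_{l\ne j}\lambda_{i_l}\bigr)c_{i_j,1}$, which still lies in $\Span_\kk\{c_{1,1},\dots,c_{k,1}\}$. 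Your concluding remark shows you intended the corrected version, so this is a matter of phrasing only.
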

\begin{proof}
For any connected graded algebra $R$ with any codimension 1 ideal $I$, the tangent dimension of $I$ is said to be $\dim I/I^2$. It is easy to show that the tangent dimension of the augmentation ideal $R_{\ge 1}$ is the upper bound for that of any codimension 1 ideal of $R$. Now since $A$ is also generated by $A_1$, any codimension 1 ideal of $A$ has tangent dimension at most $\dim A_{\ge 1}/(A_{\ge 1})^2=\dim A_1$. The isomorphism $\phi: A\to B$ yields a bijection between their codimension 1 ideals sharing the same tangent dimensions. This implies that $\dim B_1=\dim B_{\ge 1}/(B_{\ge 1})^2=\dim A_1$. 
\end{proof}

The following result uses the proof of the isomorphism lemma \cite[Theorem 1]{BZ1} for noncommutative algebras. 

\begin{theorem}
\label{thm.iso}
Let $A$ and $B$ be two connected graded Poisson algebras finitely generated in degree one. If $A\cong B$ as ungraded Poisson algebras, then $A\cong B$ as graded Poisson algebras. 
\end{theorem}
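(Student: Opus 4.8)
The plan is to adapt the graded-versus-ungraded isomorphism theorem for connected graded associative algebras, \cite[Theorem~1]{BZ1}, carrying the Poisson bracket along at each stage. Let $\phi\colon A\xrightarrow{\sim}B$ be an isomorphism of ungraded Poisson algebras, and put $\fm_A=A_{\ge 1}$, $\fm_B=B_{\ge 1}$. Since $\{A_i,A_j\}\subseteq A_{i+j}$ in a graded Poisson algebra and scalars are Poisson central, $\fm_A$ is a Poisson ideal; because $A$ is generated in degree one, $\fm_A^{\,n}=A_{\ge n}$, so the $\fm_A$-adic (descending) Poisson filtration recovers the grading and $\gr_{\fm_A}(A)\iso A$ as graded Poisson algebras, and likewise $\gr_{\fm_B}(B)\iso B$.

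The first step is to transport the adic filtration. Set $\mathfrak n:=\phi(\fm_A)$, a Poisson ideal of $B$ with $\mathfrak n^{\,n}=\phi(\fm_A^{\,n})$. Then $\phi$ is an isomorphism of filtered Poisson algebras from $(A,\fm_A\text{-adic})$ to $(B,\mathfrak n\text{-adic})$, hence induces a graded Poisson isomorphism $A\iso\gr_{\fm_A}(A)\iso\gr_{\mathfrak n}(B)$. As $\phi$ restricts to a linear isomorphism $\fm_A/\fm_A^{\,2}\iso\mathfrak n/\mathfrak n^{\,2}$, Lemma~\ref{lem.igp} gives $\dim_\kk \mathfrak n/\mathfrak n^{\,2}=\dim_\kk A_1=\dim_\kk B_1=:d$; thus $\gr_{\mathfrak n}(B)$ is again connected graded, generated in degree one with $d$-dimensional degree-one component, and (using the hypothesis) $\gr_{\mathfrak n}(B)\iso A\iso B$ as \emph{ungraded} Poisson algebras. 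It then remains to upgrade this to a \emph{graded} isomorphism $\gr_{\mathfrak n}(B)\iso B$.

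For the upgrade, following \cite{BZ1} I would reduce to the case in which the ungraded isomorphism carries $\fm_A$ onto $\fm_B$. Granting that, fix a basis $x_1,\dots,x_d$ of $A_1$ and let $y_i\in B_1$ be the degree-one component of $\phi(x_i)$, so $\phi(x_i)=y_i+h_i$ with $h_i\in B_{\ge 2}$. Since $\phi(A_1)$ generates $B$, the $y_i$ span $B_1$, hence form a basis; present $A=\kk[T_1,\dots,T_d]/I_A$ and $B=\kk[T_1,\dots,T_d]/I_B$ via $T_i\mapsto x_i$ and $T_i\mapsto y_i$, with $I_A,I_B$ homogeneous. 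For homogeneous $f\in I_A$ of degree $m$ one has $0=\phi(f(x_1,\dots,x_d))=f(y_1+h_1,\dots,y_d+h_d)=f(y_1,\dots,y_d)+(\text{terms in }B_{\ge m+1})$, and comparing degree-$m$ components gives $f\in I_B$; the symmetric argument applied to $\phi^{-1}$ gives $I_B\subseteq I_A$, so $I_A=I_B$ and $x_i\mapsto y_i$ is a graded algebra isomorphism $A\xrightarrow{\sim}B$. It is a Poisson isomorphism: since $\{x_i,x_j\}_A\in A_2$, comparing degree-two components in $\phi(\{x_i,x_j\}_A)=\{\phi(x_i),\phi(x_j)\}_B=\{y_i,y_j\}_B+(\text{terms in }B_{\ge 3})$ shows the structure constants agree in the bases $\{x_i\}$ and $\{y_i\}$. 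Combining the two steps yields $A\iso\gr_{\mathfrak n}(B)\iso B$ as graded Poisson algebras.

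The main obstacle I expect is the reduction in the last step to the situation $\phi(\fm_A)=\fm_B$: a nonzero constant term $\epsilon_B(\phi(x_i))$ would wreck the degree comparison above, and since for a general codimension-one ideal of maximal tangent dimension the tangent cone need not even be abstractly isomorphic to $B$, this reduction is exactly where the hypothesis $A\iso B$ has to be used (following \cite{BZ1}). A secondary, routine point is tracking the Poisson bracket under the passage to associated graded algebras; this is cleanest if one works with the Rees algebra of the degree filtration, whose special fibre is the associated graded of the transported filtration and inherits a canonical graded Poisson structure.
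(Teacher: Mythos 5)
There is a genuine gap: your argument proves the theorem only under the additional hypothesis $\phi(A_{\ge 1})=B_{\ge 1}$, and the passage you flag as ``the main obstacle'' --- removing that hypothesis --- is exactly the content of the theorem and of the paper's proof. In the paper's notation, after a change of basis one can arrange $\phi(x_i)=y_i+y_i'$ for $i\le d-1$ and $\phi(x_d)=\alpha_d+y_d+y_d'$ with $y_i'\in B_{\ge 2}$ and a scalar $\alpha_d$ that may be nonzero (this case really occurs: $x\mapsto x+1$ on $\kk[x]$), and there is no general way to compose with automorphisms so as to kill $\alpha_d$. The paper does not reduce to $\alpha_d=0$; it treats $\alpha_d\neq 0$ head-on by a double induction (Claims I and II in the proof of Theorem~\ref{thm.iso}): writing a homogeneous relation as $r=\sum_{s}g_s(x_1,\dots,x_{d-1})x_d^{\,s}$ and extracting the lowest-degree component of $0=\phi(r)$ gives $g_{s_0}(y_1,\dots,y_{d-1})\alpha_d^{\,s_0}=0$ for the top index $s_0$; the inductively established identification of $A_{m-s_0}$ with $B_{m-s_0}$ then forces $s_0=0$, so relations do not involve $x_d$ and the truncated map $x_i\mapsto y_i$ is a well-defined graded algebra isomorphism. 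A parallel degree-by-degree comparison (the degree-$0$ and degree-$1$ parts of $\phi(\{x_i,x_j\})$ force the coefficients $c$ and $b_k$ of $x_d^2$ and $x_kx_d$ in $\{x_i,x_j\}$ to vanish) is what makes the truncated map a Poisson morphism. None of this appears in your write-up; you prove the bracket compatibility only in the already-granted case.

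A secondary point: your first step, passing to $\gr_{\mathfrak n}(B)$ for $\mathfrak n=\phi(A_{\ge 1})$, does not advance the argument. It yields a graded Poisson isomorphism $A\iso\gr_{\mathfrak n}(B)$ and an \emph{ungraded} one $\gr_{\mathfrak n}(B)\iso B$, so the remaining ``upgrade'' is an instance of the very statement being proved, applied to the pair $(\gr_{\mathfrak n}(B),B)$; and, as you yourself observe, when $\mathfrak n\neq B_{\ge 1}$ there is no reason for $\gr_{\mathfrak n}(B)$ to be graded-isomorphic to $B$ without invoking the theorem. To repair the proof you should drop that detour and supply the induction handling a nonzero constant term, as in the paper or in the source argument of Bell--Zhang.
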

\begin{proof}
Let $\phi: A\xrightarrow{\sim} B$ be such an ungraded isomorphism. From Lemma \ref{lem.igp}, $\dim A_1=\dim B_1=d$. Say $A_1=\text{span}(x_1,\dots,x_d)$ and $B_1=\text{span}(y_1,\dots,y_d)$. Note that $\phi(A_{\ge 1})$ is a codimension 1 Poisson ideal in $B$. By changing bases, without loss of generality, we can write
\begin{equation}
\label{eq.iso}
\phi(x_i)=y_i+y_i',\ 1\le i\le d-1,\ \text{and}\ \phi(x_d)=\alpha_d+y_d+y_d',
\end{equation}
where $y_1',\dots,y_d'\in B_{\ge 2}$ and $\alpha_d\in \kk$. Write $I=A_{\ge 1}$ and $J=B_{\ge 1}$. If $\alpha_d=0$, then $\phi(I)=J$. After taking the associated graded Poisson algebras, we have $A\cong \gr_I A\cong \gr_J B\cong B$ as graded Poisson algebras. 

Now we assume $\alpha_d\neq 0$. We first show the following two claims together by induction on the degree $N\ge 0$.

Claim I: If $r(x_1,\dots,x_d)$ is a homogeneous relation of degree $N$ in $A$, then $r(y_1,\dots,y_d)=0$ in $B$.

Claim II: $\dim A_N=\dim B_N$.

Claims I and II are trivial for $N=0,1$. Suppose they hold for $N\le m-1$. Now let $r(x_1,\dots,x_d)$ be a homogeneous relation of degree $m$ in $A$. Write 
$$r(x_1,\dots,x_d)=\sum_{0\le s\le m} g_s(x_1,\dots,x_{d-1})x_d^s,$$
where $g_s(x_1,\dots,x_{d-1})$ is a homogeneous term in $A$ of degree $m-s$. Let $s_0$ be the largest integer $s$ such that $g_s\neq 0$ in $A$. If $s_0=0$, then $r(x_1,\dots,x_d)=g_0(x_1,\dots,x_{d-1})$. By equation \eqref{eq.iso}, the lowest degree term of 
\begin{align*}
0   &= \phi(r(x_1,\dots,x_d))
    = r(y_1+y_1',\dots,y_{d-1}+y_{d-1}',\alpha_d+y_d+y_d') \\
    &= g_0(y_1+y_1',\dots,y_{d-1}+y_{d-1}')
\end{align*}
is $g_0(y_1,\dots,y_{d-1})$. So, $r(y_1,\dots,y_d)=g_0(y_1,\dots,y_{d-1})=0$ in $B$. If $s_0\ge 1$, write $r(x_1,\dots,x_d)=\sum_{0\le s\le s_0} g_s x_d^s$ with $g_{s_0}\neq 0$ and 
\[ 0 = \phi(r(x_1,\dots,x_d))
    =\sum_{0\le s\le s_0}g_s(y_1+y_1',\dots,y_{d-1}+y_{d-1}')(\alpha_d+y_d+y_d')^s,\]
whose lowest degree term is $g_{s_0}(y_1,\dots,y_{d-1})\alpha_d^{s_0}=0$. Since $\alpha_d\neq 0$, we get $g_{s_0}(y_1,\dots,y_{d-1})=0$ in $B$ but $g_{s_0}(x_1,\dots,x_{d-1})\neq 0$ in $A$ by the choice of $s_0$. By the induction hypotheses, Claims I and II in degree $m-s_0(<m)$ imply that the map $x_i\mapsto y_i$ for all $1\le i\le d$ induces an isomorphism between $A_{m-s_0}$ and $B_{m-s_0}$ as vector spaces. So, $g_{s_0}(y_1,\dots,y_{d-1})=0$ in $B_{m-s_0}$ implies that $g_{s_0}(x_1,\dots,x_{d-1})=0$ in $A_{m-s_0}$ as well. This contradicts our choice of $s_0$. Hence this case cannot happen and Claim I is true in degree $m$. Claim II in degree $m$ follows from Claim I since the map $x_i\mapsto y_i$ sends all relations of degree $m$ to $0$. Then $\dim A_m\ge \dim B_m$. By symmetry, the inverse isomorphism $\phi^{-1}: B\to A$ implies that $\dim B_m\ge \dim A_m$, so $\dim A_m=\dim B_m$. In conclusion, if a Poisson algebra isomorphism $\phi: A\to B$ satisfies Equation \eqref{eq.iso}, then the modified map $\widetilde{\phi}: A\to B$ via $x_i\mapsto y_i$ for $1\le i\le d$ is a well-defined algebra isomorphism. 

It remains to show that $\widetilde{\phi}(\{p,q\})=\{\widetilde{\phi}(p),\widetilde{\phi}(q)\}$ for all homogeneous elements $p,q\in A$. We will proceed by induction on the total degree $n=\text{deg}(p)+\text{deg}(q)$. If $n\le 1$, it is trivial. If $n=2$, without loss of generality, we can assume $p=x_i,q=x_j$. Because the Poisson bracket is homogeneous, we can write 
$$\{x_i,x_j\}=r(x_1,\dots,x_{d-1})+\left(\sum_{1\le k\le d-1} b_kx_k\right)x_d+cx_d^2$$
for some $b_k,c\in \kk$ and $r(x_1,\dots,x_{d-1})\in A_2$. Applying the Poisson isomorphism $\phi$ in \eqref{eq.iso} to the above equation, we get 
\begin{align*}
\{y_i+y_i',y_j+y_j'\}
    &= r(y_1+y_1',\dots,y_{d-1}+y_{d-1}')
        +\left(\sum_{1\le k\le d-1} (b_ky_k+b_ky_k')\right)(\alpha_d+y_d++y_d') \\ 
        &\qquad +c(\alpha_d+y_d+y_d')^2.
\end{align*}
Since the left hand side is of degree $\ge 2$, the degree 0 part of the right hand side yields that $c\alpha_d^2=0$. A similar argument suggests that we can assume $\alpha_d\neq 0$ and $c=0$. Then the degree 1 part of the right hand side gives that $\alpha_d(\sum_{1\le k\le d-1} b_ky_k)=0$. So we have all $b_k=0$ and $\{x_i,x_j\}=r(x_1,\dots,x_{d-1})$. Then by comparing the degree 2 parts in both sides, we get 
$$\widetilde{\phi}(\{x_i,x_j\})=r(y_1,\dots,y_{d-1})=\{y_i,y_j\}=\{\widetilde{\phi}(x_i),\widetilde{\phi}(x_j)\}.$$
Finally, suppose that $n\ge 2$. We can write $p=x_ib$ with $\text{deg}(b)+\text{deg}(q)<n$. Note that $\widetilde{\phi}$ is an algebra map and by induction hypothesis we have
\begin{align*}
\widetilde{\phi}(\{x_ib,q\})&\ =\widetilde{\phi}(x_i\{b,q\}+b\{x_i,q\})\\
&\ =\widetilde{\phi}(x_i)\widetilde{\phi}(\{b,q\})+\widetilde{\phi}(b)\widetilde{\phi}(\{x_i,q\})\\
&\ =\widetilde{\phi}(x_i)\{\widetilde{\phi}(b),\widetilde{\phi}(q)\}+\widetilde{\phi}(b)\{\widetilde{\phi}(x_i),\widetilde{\phi}(q)\}\\
&\ =\{\widetilde{\phi}(x_i)\widetilde{\phi}(b),\widetilde{\phi}(q)\}\\
&\ =\{\widetilde{\phi}(x_ib),\widetilde{\phi}(q)\}\\
&\ =\{\widetilde{\phi}(p),\widetilde{\phi}(q)\}.
\end{align*}
This completes our proof. 
\end{proof}

The following corollaries are immediate consequences of the isomorphism lemma for connected graded Poisson algebras. 
\begin{corollary}
\label{cor.poisdecom}
Suppose that a Poisson algebra $A$ has two graded Poisson algebra decompositions such that 
\[ A~=~\bigoplus_{i\ge 0} A_i~=~\bigoplus_{i\ge 0} A'_i\] where 
\begin{enumerate}
\item $A_0=A'_0=\kk$,
\item $A$ is generated by $A_1$ (respectively by $A_1'$) as an algebra, and
\item either $A_1$ or $A_1'$ is finite dimensional over $\kk$.
\end{enumerate}
Then there is a Poisson automorphism $\phi: A\to A$ such that $\phi(A_i)=A_i'$ for all integers $i\ge 0$.
\end{corollary}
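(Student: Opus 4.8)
The plan is to deduce Corollary~\ref{cor.poisdecom} from Theorem~\ref{thm.iso} by producing an intermediary \emph{ungraded} Poisson isomorphism and then invoking the theorem to upgrade it. Concretely, I would consider the two gradings on $A$ as two connected graded Poisson algebra structures, call them $A^{(1)} = \bigoplus_i A_i$ and $A^{(2)} = \bigoplus_i A_i'$, each finitely generated in degree one (using hypotheses (1) and (2), and noting that hypothesis (3) together with Lemma~\ref{lem.igp} forces \emph{both} degree-one pieces to be finite dimensional, since an ungraded isomorphism between the two structures preserves tangent dimension). The key observation is that $A^{(1)}$ and $A^{(2)}$ are isomorphic as \emph{ungraded} Poisson algebras in a trivial way: they are literally the same Poisson algebra $A$, just equipped with two different gradings, so the identity map $\id_A \colon A^{(1)} \to A^{(2)}$ is an ungraded Poisson isomorphism.

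Now I would apply Theorem~\ref{thm.iso} to this ungraded isomorphism $\id_A$. The theorem guarantees the existence of a \emph{graded} Poisson isomorphism $\psi \colon A^{(1)} \to A^{(2)}$. But here one must be careful about what ``graded isomorphism'' delivers: inspecting the proof of Theorem~\ref{thm.iso}, the graded isomorphism it produces is the map $\widetilde\phi$ built from a fixed ungraded isomorphism $\phi$ by sending a chosen basis $x_1, \dots, x_d$ of $(A^{(1)})_1$ to a chosen basis $y_1, \dots, y_d$ of $(A^{(2)})_1$; in particular $\psi\bigl((A^{(1)})_i\bigr) = (A^{(2)})_i$ for all $i$, i.e.\ $\psi(A_i) = A_i'$ for all $i \ge 0$. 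Since both $A^{(1)}$ and $A^{(2)}$ have underlying Poisson algebra equal to $A$, the map $\psi$ is precisely a Poisson automorphism $\phi \colon A \to A$ of the underlying (ungraded) Poisson algebra satisfying $\phi(A_i) = A_i'$ for all $i$, which is exactly the assertion.

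The main obstacle — and the point that needs the most care in writing — is the bookkeeping in the previous paragraph: Theorem~\ref{thm.iso} as stated only asserts the \emph{existence} of some graded isomorphism between two graded Poisson algebras, and a priori that abstract statement does not say the isomorphism intertwines the two gradings \emph{as decompositions of one fixed underlying algebra}. So the honest argument is to rerun (or cite in detail) the construction inside the proof of Theorem~\ref{thm.iso}: start from $\phi = \id_A$, apply a change of basis on $(A^{(2)})_1$ to bring $\phi$ into the normal form \eqref{eq.iso}, observe that the resulting modified map $\widetilde\phi$ is the desired graded Poisson isomorphism, and note that by construction $\widetilde\phi$ carries $A_i$ onto $A_i'$ degreewise. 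One should also remark that the finite-dimensionality hypothesis (3) is exactly what is needed to legitimately invoke Lemma~\ref{lem.igp} and hence Theorem~\ref{thm.iso}, and that the conclusion is symmetric in the two gradings so there is no loss of generality in which of $A_1, A_1'$ is assumed finite dimensional. Beyond this, the proof is a direct corollary with no further computation required.
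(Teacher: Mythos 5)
Your proposal is correct and is exactly the route the paper intends: the paper gives no separate proof, stating only that the corollary is an immediate consequence of Theorem~\ref{thm.iso}, and your derivation via the identity map as the ungraded Poisson isomorphism (with Lemma~\ref{lem.igp} supplying finite dimensionality of the other degree-one piece) is precisely that deduction. Your extra care about what ``graded isomorphism'' delivers is sound, though it is already settled by the definition: a graded isomorphism $\psi\colon A^{(1)}\to A^{(2)}$ satisfies $\psi(A_i)=A_i'$ by definition, and since both sides share the underlying Poisson algebra $A$, it is automatically the desired automorphism.
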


We will use $\text{Aut}_P(A)$ (resp. $\text{Aut}_{grP}(A)$) to denote the group of all (graded) Poisson automorphisms of a (graded) Poisson algebra $A$.

\begin{corollary}
Retain the hypotheses of Corollary \ref{cor.poisdecom}. If $\text{Aut}_P(A)=\text{Aut}_{grP}(A)$, then $A_i=A_i'$ for all integers $i\ge 0$.
\end{corollary}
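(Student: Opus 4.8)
The plan is to leverage Corollary~\ref{cor.poisdecom} directly, since the extra hypothesis $\Aut_P(A)=\Aut_{grP}(A)$ is exactly what converts the abstract automorphism it produces into an equality of graded pieces. First I would apply Corollary~\ref{cor.poisdecom} to the two decompositions $A=\bigoplus_{i\ge 0}A_i=\bigoplus_{i\ge 0}A_i'$, which are assumed to satisfy conditions (1)--(3). This yields a Poisson automorphism $\phi:A\to A$ with $\phi(A_i)=A_i'$ for all $i\ge 0$. Under the standing hypothesis $\Aut_P(A)=\Aut_{grP}(A)$, this $\phi$ is in fact a \emph{graded} Poisson automorphism with respect to (at least) one of the two gradings; I would need to be slightly careful about which grading is meant, but the natural reading is that $\phi$ preserves the grading $A=\bigoplus_i A_i$, i.e.\ $\phi(A_i)=A_i$ for all $i$.

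Granting that, the conclusion is immediate: $A_i'=\phi(A_i)=A_i$ for every $i\ge 0$. So the core of the argument is just composing these two facts. The one genuine subtlety — and the step I expect to be the main (minor) obstacle — is pinning down the sense in which ``$\phi$ is graded'' when $A$ carries two a priori different gradings, and confirming that $\Aut_{grP}(A)$ in the hypothesis refers to automorphisms graded for the first decomposition. If instead one wants symmetry, one can run the same argument with the roles of the two gradings swapped (applying Corollary~\ref{cor.poisdecom} to get $\psi$ with $\psi(A_i')=A_i$) and note that $\psi^{-1}$ plays the role of $\phi$; either way, a graded automorphism sending $A_i$ to $A_i'$ must send $A_i$ to $A_i$, forcing $A_i=A_i'$.

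Concretely, I would write: by Corollary~\ref{cor.poisdecom} there is $\phi\in\Aut_P(A)$ with $\phi(A_i)=A_i'$ for all $i$. By hypothesis $\phi\in\Aut_{grP}(A)$, so $\phi(A_i)=A_i$ for all $i$. Comparing the two descriptions of $\phi(A_i)$ gives $A_i=A_i'$ for all $i\ge 0$, as desired. No induction or calculation is required beyond what is already packaged in Corollary~\ref{cor.poisdecom} (hence ultimately in Theorem~\ref{thm.iso}), so the proof is a one-paragraph deduction.

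\begin{proof}
By Corollary~\ref{cor.poisdecom}, there is a Poisson automorphism $\phi:A\to A$ such that $\phi(A_i)=A_i'$ for all $i\ge 0$. Since $\Aut_P(A)=\Aut_{grP}(A)$, the automorphism $\phi$ is graded with respect to the decomposition $A=\bigoplus_{i\ge 0}A_i$, that is, $\phi(A_i)=A_i$ for all $i\ge 0$. Comparing the two expressions for $\phi(A_i)$ yields $A_i=A_i'$ for all integers $i\ge 0$.
\end{proof}
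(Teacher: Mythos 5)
Your proof is correct and is exactly the intended one-line deduction: the paper states this corollary without proof as an immediate consequence of Corollary \ref{cor.poisdecom}, and your argument (obtain $\phi$ with $\phi(A_i)=A_i'$, then use $\Aut_P(A)=\Aut_{grP}(A)$ to force $\phi(A_i)=A_i$) is the same approach. Your side remark about which of the two gradings $\Aut_{grP}(A)$ refers to is a fair observation, and your resolution is fine.
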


Now we can state the main result regarding the \textbf{PZCP} in the connected graded case with a restriction on the Poisson center being generated in degree at least $2$. It can be viewed as a Poisson version of \cite[Theorem 9]{BZ1}. 

\begin{theorem}
\label{thm.zariski1}
Let $A$ and $B$ be two connected graded Poisson algebras finitely generated in degree one. Suppose either $\pcnt(A)$ or $\pcnt(B)$ is generated in degree at least $2$. If $A[t_1,\dots,t_n]\cong B[s_1,\dots,s_n]$ as ungraded Poisson algebras, then $A\cong B$ as connected graded Poisson algebras.
\end{theorem}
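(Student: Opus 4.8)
\noindent The plan is to promote the hypothesized ungraded isomorphism to a graded one and then cancel the polynomial variables inside the Poisson center, following the strategy of \cite[Theorem~9]{BZ1}. First I would grade $A[t_1,\dots,t_n]$ and $B[s_1,\dots,s_n]$ by declaring $\deg t_i=\deg s_j=1$, extending the given gradings of $A$ and $B$. Since each $t_i$ is Poisson central and the bracket on $A$ is graded, a short verification shows this is a Poisson $\NN$-grading, and both algebras are connected graded and generated in degree one, by the finite-dimensional spaces $A_1\oplus\bigoplus_i\kk t_i$ and $B_1\oplus\bigoplus_j\kk s_j$ respectively. Hence Theorem~\ref{thm.iso} turns $A[t_1,\dots,t_n]\iso B[s_1,\dots,s_n]$ into a \emph{graded} Poisson isomorphism $\phi\colon A[t_1,\dots,t_n]\xrightarrow{\sim}B[s_1,\dots,s_n]$.

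Next I would locate the variables $t_i$ inside the Poisson center. By Lemma~\ref{lem.cnt}, $\pcnt(A[t_1,\dots,t_n])=\pcnt(A)[t_1,\dots,t_n]$ and similarly for $B$, and these are graded subalgebras, so $\phi$ restricts to a graded isomorphism $\pcnt(A)[t_1,\dots,t_n]\xrightarrow{\sim}\pcnt(B)[s_1,\dots,s_n]$. The Poisson center of a connected graded Poisson algebra is again connected graded, so $\pcnt(B)_0=\kk$; combined with the hypothesis that $\pcnt(B)$ is generated in degree at least $2$ (whence $\pcnt(B)_1=0$), the degree-one component of $\pcnt(B)[s_1,\dots,s_n]$ is exactly $\bigoplus_j\kk s_j$. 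As each $t_i$ is a degree-one element of $\pcnt(A[t_1,\dots,t_n])$, we get $\phi(t_i)\in\bigoplus_j\kk s_j$, and injectivity of $\phi$ forces $\phi\bigl(\bigoplus_i\kk t_i\bigr)=\bigoplus_j\kk s_j$. If it is $\pcnt(A)$ rather than $\pcnt(B)$ that is generated in degree at least $2$, the same argument applied to $\phi^{-1}$ works, since the desired conclusion $A\iso B$ is symmetric in $A$ and $B$.

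Finally I would cancel the variables. The previous step shows $\phi$ carries the ideal $(t_1,\dots,t_n)$ of $A[t_1,\dots,t_n]$ onto the ideal $(s_1,\dots,s_n)$ of $B[s_1,\dots,s_n]$; both are graded Poisson ideals because the $t_i$ and $s_j$ bracket trivially against everything, so $\phi$ induces a graded Poisson isomorphism on the quotients. Since $A[t_1,\dots,t_n]/(t_1,\dots,t_n)\iso A$ and $B[s_1,\dots,s_n]/(s_1,\dots,s_n)\iso B$ as connected graded Poisson algebras, this yields $A\iso B$ as connected graded Poisson algebras, as required.

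The essential inputs here are Theorem~\ref{thm.iso} and Lemma~\ref{lem.cnt}, both already established, so the present argument is essentially formal; the only point I would treat carefully is the Poisson bookkeeping, namely that the degree-one grading on $A[t_1,\dots,t_n]$ really is a Poisson grading and that $(t_1,\dots,t_n)$ really is a graded Poisson ideal with $A[t_1,\dots,t_n]/(t_1,\dots,t_n)\iso A$. I do not anticipate a genuine obstacle there; the substantive difficulty of the whole package lies in Theorem~\ref{thm.iso}, which is already in hand.
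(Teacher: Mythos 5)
Your proposal is correct and follows essentially the same route as the paper's proof: apply Theorem \ref{thm.iso} with $\deg t_i=\deg s_j=1$ to get a graded isomorphism, use the degree-$\geq 2$ hypothesis on the Poisson center to identify the degree-one central elements with the span of the adjoined variables (the paper runs this through $\phi^{-1}(s_j)$ rather than $\phi(t_i)$, which is immaterial), and then pass to the quotient by $(t_1,\dots,t_n)$ and $(s_1,\dots,s_n)$. No gaps.
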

\begin{proof}
If we set $\deg(t_i)=\deg(s_i)=1$ for all $i$, both $C:=A[t_1,\dots,t_n]$ and $D:=B[s_1,\dots,s_n]$ are connected graded Poisson algebras that are finitely generated in degree one. Since $C\cong D$, by Theorem \ref{thm.iso}, there is a graded Poisson algebra isomorphism $\phi: C\xrightarrow{\sim} D$. Without loss of generality, we can take $\pcnt(A)\cap A_1=\{0\}$ which implies $\pcnt(C)\cap C_1=\bigoplus_{1\le i\le n}\kk\, t_i$. The $s_j$'s are in the Poisson center of $D$. Therefore $\phi^{-1}(s_j)\in \bigoplus_{1\le i\le n}\kk\, t_i$ for all $j$. By a dimension argument, $\phi^{-1}(\bigoplus_{1\le i\le n}\kk\, s_i)=\bigoplus_{1\le i\le n}\kk\, t_i$. Modulo $s_i$ and $t_i$ we obtain an induced connected graded Poisson algebra isomorphism $\phi: A\cong C/(t_i)\cong D/(s_i)\cong B$.
\end{proof}

As an application, we can solve the isomorphism problem between skew quadratic Poisson algebras. It can be viewed as the semiclassical limit version of the isomorphism problem for skew polynomial algebras, which was studied in \cite{BZ1}.

\begin{theorem}
\label{thm.skewP}
Suppose $(\lambda_{ij})_{1\le i,j\le n}$ is some skew-symmetric matrix with $\lambda_{ij}\neq 0$ for all $i\neq j$. Let $A$ be the quotient of a skew quadratic Poisson algebra $\kk_{\lambda_{ij}}[x_1,\dots,x_n]/M$ where $\{x_i,x_j\}=\lambda_{ij}x_ix_j$ and $M$ is a graded Poisson ideal in $\kk_{\lambda_{ij}}[x_1,\dots,x_n]_{\ge 3}$. Let $B$ be another graded Poisson algebra $\kk_{\lambda'_{ij}}[x_1,\dots,x_m]/N$ where $N$ is a graded Poisson ideal in $\kk_{\lambda'_{ij}}[x_1,\dots,x_m]_{\ge 2}$. If $A$ is isomorphic to $B$ as ungraded Poisson algebras, then $n=m$ and there is a permutation $\sigma\in S_n$ such that $\lambda_{ij}=\lambda'_{\sigma(i)\sigma(j)}$ for all $i,j$. Furthermore, after an elementary change of generators in $A$, $A=B$.  
\end{theorem}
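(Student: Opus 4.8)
The plan is to deduce everything from the graded isomorphism theorem (Theorem \ref{thm.iso}) together with a careful analysis of the degree-two part of the Poisson bracket. First I would check the hypothesis of Theorem \ref{thm.iso}: both $A$ and $B$ are connected graded Poisson algebras finitely generated in degree one, since $M\subseteq \kk_{\lambda_{ij}}[x_1,\dots,x_n]_{\ge 3}$ and $N\subseteq \kk_{\lambda'_{ij}}[x_1,\dots,x_m]_{\ge 2}$ kill nothing in degrees $0,1$ (and for $A$ nothing in degree $2$ either). Hence an ungraded Poisson isomorphism $A\cong B$ upgrades to a \emph{graded} Poisson isomorphism $\psi\colon A\xrightarrow{\sim} B$. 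Comparing degree-one components gives $n=\dim A_1=\dim B_1=m$ by Lemma \ref{lem.igp}, and $\psi$ restricts to a linear isomorphism $A_1\to B_1$; write $\psi(x_i)=\sum_k c_{ik} x_k$ for an invertible matrix $C=(c_{ik})$.

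Next I would exploit that $A$ has \emph{no} relations in degree $\le 2$, so $A_2$ is the full space of quadratic monomials $x_ix_j$ ($i\le j$), and the bracket $\{x_i,x_j\}=\lambda_{ij}x_ix_j$ is a ``diagonal'' bilinear form on $A_1$ with values in $A_2$. Applying $\psi$ and using that it is a Poisson map of degree zero, one gets in $B_2$ the identity
\begin{equation}
\label{eq.skewPbracket}
\Bigl\{\sum_k c_{ik}x_k,\ \sum_\ell c_{j\ell}x_\ell\Bigr\}
 = \sum_{k,\ell} c_{ik}c_{j\ell}\,\lambda'_{k\ell}\,x_kx_\ell
 \ \equiv\ \lambda_{ij}\sum_{k,\ell} c_{ik}c_{j\ell}\,x_kx_\ell \pmod{N},
\end{equation}
and since $N$ has no degree-two part this is an honest equality of quadratic forms. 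The key step is to extract from \eqref{eq.skewPbracket}, together with the hypothesis that all $\lambda_{ij}\ne 0$ (and all $\lambda'_{k\ell}\ne 0$), that $C$ must be a monomial matrix, i.e.\ a permutation matrix times a diagonal matrix. The standard argument: fix $i$ and look at the coefficient of $x_k^2$ on both sides of the ``diagonal'' part — since the left side of \eqref{eq.skewPbracket} is $\sum_{k<\ell}(c_{ik}c_{j\ell}+c_{i\ell}c_{jk})\lambda'_{k\ell}x_kx_\ell$ (no $x_k^2$ terms ever appear because $\lambda'_{kk}=0$), matching against $\lambda_{ij}\sum c_{ik}c_{j\ell}x_kx_\ell$ forces $c_{ik}c_{ik}=0$ unless $i=j$, hence $c_{ik}c_{jk}=0$ for all $k$ whenever $i\ne j$; combined with invertibility of $C$ this yields that each column of $C$ has exactly one nonzero entry, so $C$ is monomial. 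Writing $C=PD$ with $P$ the permutation matrix of some $\sigma\in S_n$ and $D$ diagonal invertible, substitute back into \eqref{eq.skewPbracket} and compare coefficients of the single surviving monomial $x_{\sigma(i)}x_{\sigma(j)}$ to conclude $\lambda_{ij}=\lambda'_{\sigma(i)\sigma(j)}$.

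Finally, for the last sentence I would note that the elementary change of generators $x_i\mapsto d_i^{-1}x_{\sigma^{-1}(i)}$ (rescaling and relabelling), which is a graded Poisson automorphism of the free skew quadratic Poisson algebra, transports the presentation of $A$ onto exactly that of $B$: it sends $\kk_{\lambda_{ij}}[\bx]$ isomorphically to $\kk_{\lambda'_{ij}}[\bx]$ and carries $M$ to $N$ (the latter because $\psi$ already identifies the two algebras, and $N$ is recovered as the kernel of the presentation). After this change of generators the two presentations literally coincide, i.e.\ $A=B$. The main obstacle I anticipate is the monomial-matrix step: one has to be slightly careful because the Poisson bracket is \emph{antisymmetric}, so $\{x_i,x_i\}=0$ automatically and the diagonal entries carry no information; the argument must therefore run entirely through the off-diagonal coefficients and use the non-vanishing of \emph{all} off-diagonal $\lambda$'s and $\lambda'$'s to rule out any cancellation among the $c_{ik}c_{j\ell}$, together with invertibility of $C$ to pin down the combinatorial structure.
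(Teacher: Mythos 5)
Your proposal is correct in substance but takes a genuinely different route from the paper for the key step. Both arguments begin identically, invoking Theorem \ref{thm.iso} to upgrade the ungraded isomorphism to a graded one and concluding $n=m$. From there the paper argues ideal-theoretically: it first characterizes the principal Poisson ideals of $A$ generated by a degree-one element, showing (using $M\subseteq \kk_{\lambda_{ij}}[x_1,\dots,x_n]_{\ge 3}$ and $\lambda_{ij}\neq 0$ for $i\neq j$) that $(f)$ is a Poisson ideal for $f\in A_1$ if and only if $f=cx_i$; since each $(x_i)$ is a Poisson ideal of $B$ and a graded isomorphism $B\to A$ carries Poisson ideals to Poisson ideals, the images of the generators are forced to be scalar multiples of the $x_{\sigma(i)}$. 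Your argument instead works directly with the matrix $C=(c_{ik})$ of the degree-one component and extracts the monomial shape of $C$ by comparing coefficients of $x_k^2$ in the two expressions for $\{\psi(x_i),\psi(x_j)\}$, using the antisymmetry $\lambda'_{kk}=0$ and the nonvanishing of $\lambda_{ij}$. The paper's version is more intrinsic (it identifies the coordinate hyperplanes as the only degree-one principal Poisson ideals, which is a statement of independent interest), while yours is more elementary linear algebra and produces the permutation and the identity $\lambda_{ij}=\lambda'_{\sigma(i)\sigma(j)}$ in one computation. The final rescaling/relabelling step is handled the same way in both.

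One point in your write-up needs justification: you assert that ``$N$ has no degree-two part,'' but the hypothesis only places $N$ in degree $\ge 2$, so a priori $N_2$ may be nonzero and your coefficient comparison would only hold modulo $N_2$. The claim is nevertheless true, and the fix is one line: since $\psi$ is a \emph{graded} isomorphism and $M_2=0$, one has
\[
\binom{n+1}{2}=\dim A_2=\dim B_2=\binom{m+1}{2}-\dim N_2=\binom{n+1}{2}-\dim N_2,
\]
whence $N_2=0$ and $B_2$ is freely spanned by the quadratic monomials, so coefficients may be compared honestly. With that sentence inserted, your argument is complete.
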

\begin{proof}
First of all, we show under the restrictions on $\lambda_{ij}$ that for any $f\in A_1$, $(f)$ is a Poisson ideal if and only if $f=cx_i$ for some scalar $c\in \kk$. One direction is clear. Conversely, let $f=\sum a_ix_i\in A_1$ where the coefficients for at least two variables are not zero. We need to show that $(f)$ is not a Poisson ideal. Without loss of generality, we can take $a_1=a_2=1$. Suppose it is not true. Then for all $i$ we can write 
$$\{x_i,f\}=\left(\sum_{1\le j\le n} b_jx_j\right)f$$
for some scalars $b_j\in\kk$. Taking the above identity in the quotient Poisson algebra $A'=A/(x_i)$ we get $0=(\sum_{j\neq i} b_jx_j)f$. Since the relation ideal $M$ of $A$ is contained in $\kk_{\lambda_{ij}}[x_1,\dots,x_n]_{\ge 3}$ and $a_1=a_2=1$ in $f$, then $f\neq 0$ in $A'$ and $b_j=0$ for all $j\neq i$. Back in $A$, 
$\{x_i,f\}=b_ix_if$ and 
\[ x_i\left(\sum_{1\le j\le n} (\lambda_{ij}-b_i)a_jx_j\right)=0.\]
The same argument as before shows that $(\lambda_{ij}-b_i)a_j=0$ for all $i,j$. Let $j=1,2$. Then $\lambda_{i1}=\lambda_{i2}=b_i$ for all $i$. In particular, $\lambda_{12}=\lambda_{11}=0$, which contradicts the assumptions on $\lambda_{ij}$. 

Next, by Theorem \ref{thm.iso}, $A$ is isomorphic to $B$ as graded Poisson algebras. In particular, $n=m$. Let $\phi: B\to A$ be such a graded Poisson isomorphism. Since the $(\phi(x_i))$ are Poisson ideals in $A$ generated by one degree one element, by the above discussion, $\phi(x_i)=c_ix_{\sigma(i)}$ for some $c_i\in\kk$ and some permutation $\sigma\in S_n$. Up to an elementary change of basis of $A$, $\phi$ sends $x_i$ to $x_i$ for all $i$. The result follows. 
\end{proof}

\section{Artinian center and detectability}
\label{sec.artcnt}
In this section, we prove that if a Poisson algebra $A$ has artinian Poisson center, then $A$ is strongly Poisson cancellative. As an application, we will show Poisson domains of Krull dimension two with nontrivial Poisson bracket are universally Poisson cancellative. 

\begin{lemma}\label{lemm.dectext}
Let $A$ be a Poisson algebra over $\kk$ and let $\kk'/\kk$ be any field extension. 
\begin{enumerate}
    \item If $A\otimes_\kk \kk'$ is (strongly) Poisson detectable, then so is $A$.
    \item If $A\otimes_\kk \kk'$ is (strongly) $\pcnt$-detectable, then so is $A$.
\end{enumerate}
\end{lemma}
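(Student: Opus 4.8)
The plan is to descend a detectability statement from $A' := A \otimes_\kk \kk'$ to $A$ by tensoring a given isomorphism with $\kk'$ and applying the hypothesis, then pulling the conclusion back down to $A$. I will prove (1) in detail; part (2) follows by the identical argument after replacing every occurrence of an algebra by its Poisson center, using Lemma \ref{lem.cnt} to identify $\pcnt(A \otimes_\kk \kk') = \pcnt(A) \otimes_\kk \kk'$ and the fact that $\kk'$ has trivial Poisson bracket, so that the field-extension operation commutes with taking Poisson centers. (Strictly one should note $\kk'$ need not be finitely generated, but $\pcnt(A)\otimes_\kk\kk'$ still makes sense and Lemma \ref{lem.cnt}'s proof only uses linear independence over $\kk$, so it applies verbatim.)

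For (1), suppose $\phi : A[t_1,\dots,t_n] \xrightarrow{\sim} B[s_1,\dots,s_n]$ is a Poisson isomorphism for some Poisson algebra $B$ and some $n \geq 1$ (take $n=1$ in the non-strong case). Applying $-\otimes_\kk \kk'$ gives a Poisson isomorphism
\[
\phi \otimes \id : A'[t_1,\dots,t_n] \xrightarrow{\sim} B'[s_1,\dots,s_n],
\]
where $B' = B \otimes_\kk \kk'$, since $-\otimes_\kk \kk'$ is exact and compatible with the polynomial-ring and tensor-product constructions and carries the bracket \eqref{eq.cext} to the corresponding bracket over $\kk'$. Because $A'$ is (strongly) Poisson detectable, we conclude $B'[s_1,\dots,s_n] = B'[(\phi\otimes\id)(t_1),\dots,(\phi\otimes\id)(t_n)]$, equivalently $s_i \in B'[\phi(t_1),\dots,\phi(t_n)]$ for every $i$ (identifying $(\phi\otimes\id)(t_j)$ with $\phi(t_j)\otimes 1$).

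It remains to descend the membership $s_i \in B'[\phi(t_1),\dots,\phi(t_n)]$ to $s_i \in B[\phi(t_1),\dots,\phi(t_n)]$. Here I use that $\kk'$ is a faithfully flat $\kk$-module, hence admits a $\kk$-linear splitting $\kk' \cong \kk \oplus W$ as $\kk$-vector spaces with $W$ a complement; tensoring with the $\kk$-algebra $B[s_1,\dots,s_n]$ gives a $\kk$-linear (indeed $B[s_1,\dots,s_n]$-module) projection $\rho : B'[s_1,\dots,s_n] \to B[s_1,\dots,s_n]$ which is the identity on $B[s_1,\dots,s_n]$ and which carries the $B$-subalgebra $B[\phi(t_1),\dots,\phi(t_n)] \otimes_\kk \kk'$ onto $B[\phi(t_1),\dots,\phi(t_n)]$. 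Since $s_i \in B[\phi(t_1),\dots,\phi(t_n)]\otimes_\kk\kk' = B'[\phi(t_1),\dots,\phi(t_n)]$ and $\rho(s_i) = s_i$, we get $s_i \in B[\phi(t_1),\dots,\phi(t_n)]$, as required. Thus $A$ is (strongly) Poisson detectable. The main point to be careful about — and the only genuine obstacle — is this last descent step: one must check that the $\kk'$-coefficient expressions for the $s_i$ really do specialize to $B$-coefficient expressions, which is exactly what the splitting/projection $\rho$ guarantees, no Poisson structure being needed for that part.
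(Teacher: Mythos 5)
Your proposal is correct and follows essentially the same route as the paper: base-change the isomorphism along $\kk \to \kk'$, apply the detectability of $A\otimes_\kk\kk'$, and descend the resulting equality of subalgebras using flatness of $\kk'$ over $\kk$. The only difference is cosmetic — the paper simply cites flatness for the descent, whereas you make it explicit with the splitting $\kk'\cong\kk\oplus W$ and the induced projection $\rho$, which is a valid (and slightly more detailed) justification of the same step.
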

\begin{proof}
(1) Let $\phi: A[t_1,\dots,t_n]\xrightarrow{\sim} B[s_1,\dots,s_n]$ be an isomorphism for some Poisson algebra $B$ over $\kk$ and some $n\ge 1$. Write $A'=A\otimes_\kk \kk'$ and $B'=B\otimes_\kk \kk'$. Then $\phi$ induces an isomorphism $\phi': A'[t_1,\dots,t_n]\cong B'[s_1,\dots,s_n]$. Since $A'$ is (strongly) Poisson detectable, we have 
\[
(B[\phi(t_1),\dots,\phi(t_n)])\otimes_\kk \kk'\cong B'[\phi'(t_1),\dots,\phi'(t_n)]=B'[s_1,\dots,s_n]\cong (B[s_1,\dots,s_n])\otimes_\kk \kk'.
\]
Because $\kk'/\kk$ is flat, we get $B[\phi(t_1),\dots,\phi(t_n)]=B[s_1,\dots,s_n]$ and $A$ is (strongly) Poisson detectable.

(2) can be proved similarly by noting that $\pcnt(A\otimes_\kk \kk')=\pcnt(A)\otimes_\kk \kk'$.
\end{proof}

A Poisson ideal that is also a prime ideal is called {\sf Poisson prime}. Recall that the nilradical $N(A)$ of a commutative algebra $A$ is the intersection of all its prime ideals which consists of all nilpotent elements. In particular, for any noetherian Poisson algebra $A$, \cite[Lemma 1.1(d)]{GoodPoisson} shows that any prime ideal of $A$ contains a Poisson prime. This implies that the nilradical of $A$ is the intersection of all Poisson prime ideals, which turns out to be a Poisson ideal. We denote the {\sf reduced Poisson algebra} of $A$ by  $A_{red}=A/N(A)$.

\begin{lemma}\label{lemm.nilrad}
Let $A$ be a noetherian Poisson algebra. If $A/N(A)$ is (strongly) Poisson detectable, so is $A$.
\end{lemma}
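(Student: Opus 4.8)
The plan is to reduce the detectability of $A$ to that of $A_{red} = A/N(A)$ by passing an isomorphism $\phi: A[t_1,\dots,t_n] \xrightarrow{\sim} B[s_1,\dots,s_n]$ through the nilradical functor. First I would observe that since $A$ is noetherian, so is $A[t_1,\dots,t_n]$ by the Hilbert basis theorem, hence so is the isomorphic algebra $B[s_1,\dots,s_n]$, and therefore $B$ is noetherian as well (a polynomial ring over a non-noetherian ring is non-noetherian). Thus the discussion preceding the lemma applies to $B$: the nilradical $N(B)$ is a Poisson ideal, and $B_{red} = B/N(B)$ is a reduced Poisson algebra. The key point is that the nilradical is functorial under Poisson isomorphisms and compatible with polynomial extensions: $N(A[t_1,\dots,t_n]) = N(A)[t_1,\dots,t_n]$ (the nilradical of a polynomial ring is the nilradical of the base ring extended, since $\kk[t_1,\dots,t_n]$ is reduced — indeed this holds for any reduced coefficient extension), and the analogous statement holds for $B$. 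Moreover $N(A)[t_1,\dots,t_n]$ is a Poisson ideal of $A[t_1,\dots,t_n]$ because $N(A)$ is a Poisson ideal of $A$ and the $t_i$ are Poisson central.

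Next I would push $\phi$ down to the reduced algebras. Since $\phi$ is a Poisson isomorphism it carries $N(A[t_1,\dots,t_n])$ onto $N(B[s_1,\dots,s_n])$, so it induces a Poisson isomorphism
\[
\overline{\phi}: A_{red}[t_1,\dots,t_n] \cong A[t_1,\dots,t_n]/N(A)[t_1,\dots,t_n] \xrightarrow{\sim} B[s_1,\dots,s_n]/N(B)[s_1,\dots,s_n] \cong B_{red}[s_1,\dots,s_n],
\]
with $\overline{\phi}(\overline{t_i}) = \overline{\phi(t_i)}$ where the bars denote images modulo the respective nilradicals. By hypothesis $A_{red}$ is (strongly) Poisson detectable, so (for $n=1$, or all $n \ge 1$ in the strong case) we get
\[
B_{red}[\overline{s_1},\dots,\overline{s_n}] = B_{red}[\overline{\phi(t_1)},\dots,\overline{\phi(t_n)}],
\]
i.e. each $\overline{s_i}$ lies in the subalgebra of $B_{red}[\overline{s_1},\dots,\overline{s_n}]$ generated by $B_{red}$ and the $\overline{\phi(t_j)}$.

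The final step is to lift this containment back through $N(B)$. Write $p_j = \phi(t_j) \in B[s_1,\dots,s_n]$. The relation $\overline{s_i} \in B_{red}[\overline{p_1},\dots,\overline{p_n}]$ means there is a polynomial expression $s_i = g_i(p_1,\dots,p_n)$ with coefficients in $B$, modulo an element of $N(B[s_1,\dots,s_n]) = N(B)[s_1,\dots,s_n]$; that is, $s_i - g_i(p_1,\dots,p_n) \in N(B)[s_1,\dots,s_n]$. The point is now that $N(B)[s_1,\dots,s_n]$ is a nil ideal contained in $B[p_1,\dots,p_n]$: indeed $N(B) \subseteq B \subseteq B[p_1,\dots,p_n]$, and since $B[s_1,\dots,s_n] = B[\phi(t_1),\dots,\phi(t_n)]$ would be exactly what we want — but more carefully, one shows $s_i^N \in B[p_1,\dots,p_n]$ for some $N$ by expanding $(g_i(p) + \nu_i)^N$ where $\nu_i = s_i - g_i(p) \in N(B)[s_1,\dots,s_n]$ is nilpotent, and then argues that since the $s_i$ are polynomial-ring variables over $B$ while $s_i^N \in B[p_1,\dots,p_n]$, integrality of $s_i$ over $B[p_1,\dots,p_n]$ inside the polynomial ring forces $s_i \in B[p_1,\dots,p_n]$ (using that $B[s_1,\dots,s_n]$ is a finitely generated module or a polynomial extension — here a direct monomial-degree argument on the top-degree term works). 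Hence $B[s_1,\dots,s_n] = B[\phi(t_1),\dots,\phi(t_n)]$, which is exactly (strong) Poisson detectability of $A$. The main obstacle is this last lifting step: knowing $s_i$ is in the subalgebra only "modulo nilpotents" does not immediately give membership, and one must exploit the rigidity of the polynomial grading (the $s_i$ are genuine indeterminates over $B$) to upgrade the nilpotent-error containment to an exact one — essentially showing that a variable of a polynomial ring that becomes integral over a subalgebra after killing nilpotents must already lie in that subalgebra. The $\pcnt$-detectable variant is identical, using $N(\pcnt(B)) = \pcnt(B) \cap N(B)$ and $\pcnt(A_{red})$ in place of $\pcnt(A)$, together with Lemma \ref{lemm.retrdc}.
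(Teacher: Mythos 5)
Your reduction to $A_{red}$ is exactly the paper's: $N(A[t_1,\dots,t_n])=N(A)[t_1,\dots,t_n]$, the induced isomorphism on reduced algebras, and the conclusion $s_i - g_i(\phi(t_1),\dots,\phi(t_n)) \in N(B)[s_1,\dots,s_n]$, i.e.\ $B[s_1,\dots,s_n]=B[\phi(t_1),\dots,\phi(t_n)]+N(B)[s_1,\dots,s_n]$. But your final lifting step has a genuine gap, which you yourself flag as ``the main obstacle'' without closing it. Two specific problems: (i) expanding $(g_i(p)+\nu_i)^N$ does \emph{not} show $s_i^N\in B[p_1,\dots,p_n]$, because the cross terms $\binom{N}{k}g_i(p)^{N-k}\nu_i^k$ with $0<k<N$ involve positive powers of $\nu_i$ that need not lie in $B[p_1,\dots,p_n]$ (what the identity $(s_i-g_i(p))^N=0$ actually gives is a monic relation, i.e.\ integrality, not membership of a power); and (ii) the principle ``$s_i$ integral over the subalgebra $B[p_1,\dots,p_n]$ forces $s_i\in B[p_1,\dots,p_n]$'' is false in general --- already $s$ is integral over $\kk[s^2,s^3]\subset\kk[s]$ but does not belong to it. The nilpotency of $\nu_i$, not integrality, is what must carry the argument, and a one-variable-at-a-time treatment of $s_i$ does not exploit it.

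The paper closes the gap by iterating the containment globally rather than elementwise: since $N(B)\subseteq B\subseteq B[\phi(t_1),\dots,\phi(t_n)]$, substituting $B[s_1,\dots,s_n]=B[\phi(t_1),\dots,\phi(t_n)]+N(B)[s_1,\dots,s_n]$ into itself gives
\[
B[s_1,\dots,s_n]=B[\phi(t_1),\dots,\phi(t_n)]+N(B)^m[s_1,\dots,s_n]\quad\text{for all }m\ge 1,
\]
and then Lemma \ref{lemm.invariso}(2) gives that $B$ is noetherian, so $N(B)$ is nilpotent and the error term vanishes for large $m$. This is where the noetherian hypothesis enters essentially; your write-up uses noetherianity only to invoke the existence of Poisson primes, not at the point where it is actually needed. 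If you replace your integrality argument with this iteration, the proof is complete and coincides with the paper's.
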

\begin{proof}
Suppose there is an isomorphism $\phi: A[t_1,\dots,t_n]\xrightarrow{\sim} B[s_1,\dots,s_n]$ for some Poisson algebra $B$ and some $n\ge 1$. 
Since $N(A[t_1,\dots,t_n]) = N(A)[t_1,\hdots,t_n]$ and similarly for $B$, then $\phi$ induces an isomorphism $\phi': (A/N(A))[t_1,\dots,t_n]\cong (B/N(B))[s_1,\dots,s_n]$. As $A/N(A)$ is (strongly) Poisson detectable, then $(B/N(B))[s_1,\dots,s_n]=(B/N(B))[\phi'(s_1),\dots,\phi'(s_n)]$. In another way, we have 
\[B[s_1,\dots,s_n]=B[\phi(s_1),\dots,\phi(s_n)]+N(B)[s_1,\dots,s_n].\]
Repeating this we get 
\[B[s_1,\dots,s_n]=B[\phi(s_1),\dots,\phi(s_n)]+N(B)^m[s_1,\dots,s_n],\ \text{for any}\ m\ge 1.\]
By Lemma \ref{lemm.invariso}(2), $B$ is noetherian and its nilradical $N(B)$ is nilpotent. Therefore, we get $B[s_1,\dots,s_n]=B[\phi(s_1),\dots,\phi(s_n)]$ and hence $A$ is (strongly) Poisson detectable.
\end{proof}

\begin{theorem}\label{thm.dectcan}
Let $A$ be a noetherian Poisson algebra. 
\begin{enumerate}
  \item If $A$ or $A_{red}$ is (strongly) Poisson detectable, then $A$ is (strongly) Poisson cancellative.
    \item If $A$ is (strongly) $\pcnt$-detectable, then $A$ is (strongly) Poisson cancellative. 
    \item If $A$ is (strongly) $\pcnt$-retractable, then $A$ is (strongly) Poisson cancellative. 
\end{enumerate}
\end{theorem}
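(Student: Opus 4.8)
The plan is to reduce each of the three cases to the definition of (strong) Poisson cancellativity, i.e.\ to show that an isomorphism $\phi:A[t_1,\dots,t_n]\xrightarrow{\sim} B[s_1,\dots,s_n]$ of Poisson algebras (with $n=1$ in the non-strong case) forces $A\cong B$. For part (1), the hypothesis on $A_{red}$ is already handled by Lemma \ref{lemm.nilrad}, which upgrades detectability of $A_{red}$ to detectability of $A$; so it suffices to treat the case that $A$ itself is (strongly) Poisson detectable. Then detectability gives $B[s_1,\dots,s_n]=B[\phi(t_1),\dots,\phi(t_n)]$, i.e.\ the images $\phi(t_1),\dots,\phi(t_n)$ generate the same polynomial extension of $B$ as $s_1,\dots,s_n$ do. The key point is that $\phi(t_1),\dots,\phi(t_n)$ are then algebraically independent over $B$ and generate $B[s_1,\dots,s_n]$ as a $B$-algebra (this is where one invokes the standard fact, e.g.\ from \cite{AEH} or \cite{BZ1}, that a polynomial ring in $n$ variables over $B$ which is generated over $B$ by $n$ elements is a polynomial ring on those elements). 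Consequently $\phi$ restricts to an isomorphism $A\cong A[t_1,\dots,t_n]/(t_1,\dots,t_n)\xrightarrow{\sim} B[s_1,\dots,s_n]/(\phi(t_1),\dots,\phi(t_n))=B$ of Poisson algebras, since $(t_1,\dots,t_n)$ is a Poisson ideal of $A[t_1,\dots,t_n]$ (the bracket of the $t_i$ with everything is zero) and $\phi$ carries it to the Poisson ideal $(\phi(t_1),\dots,\phi(t_n))$.

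For part (2), $\pcnt$-detectability gives the statement at the level of Poisson centers, $\pcnt(B)[s_1,\dots,s_n]=\pcnt(B)[\phi(t_1),\dots,\phi(t_n)]$, using $\pcnt(A[t_1,\dots,t_n])=\pcnt(A)[t_1,\dots,t_n]$ and likewise for $B$ (Lemma \ref{lem.cnt}). By Lemma \ref{lemm.retrdc}(3) this in particular makes $A$ (strongly) Poisson detectable, so part (2) follows from part (1). For part (3), Lemma \ref{lemm.retrdc}(2) shows that (strong) $\pcnt$-retractability implies (strong) $\pcnt$-detectability, so part (3) follows from part (2). Thus the whole theorem reduces to part (1), and indeed only to the case of $A$ Poisson detectable.

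The main obstacle is purely the bookkeeping in part (1): one must verify that the purely algebraic cancellation conclusion $\phi(A)=B$ can be arranged to respect the Poisson bracket. This is automatic once one observes that $(t_1,\dots,t_n)\subset A[t_1,\dots,t_n]$ is a Poisson ideal with quotient the Poisson algebra $A$, that $\phi$ of a Poisson ideal is a Poisson ideal, and that $(\phi(t_1),\dots,\phi(t_n))=(s_1,\dots,s_n)$ as ideals of $B[s_1,\dots,s_n]$ by detectability. Modding out and using that $\phi$ is a Poisson homomorphism yields the desired Poisson isomorphism $A\cong B$. Note that noetherianity of $A$ is used only through Lemma \ref{lemm.nilrad} (so that the reduction from $A_{red}$ to $A$ is valid, invoking nilpotence of $N(B)$ via Lemma \ref{lemm.invariso}(2)); the detectability-to-cancellativity implication itself does not require it.
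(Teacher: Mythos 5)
Your proposal is correct and follows essentially the same route as the paper: reduce (3) to (2) and (2) to (1) via Lemma \ref{lemm.retrdc}, reduce the $A_{red}$ case to $A$ via Lemma \ref{lemm.nilrad}, and then use detectability to identify $B[s_1,\dots,s_n]$ with $B[\phi(t_1),\dots,\phi(t_n)]$ and pass to the quotient by the Poisson ideal $(\phi(t_1),\dots,\phi(t_n))$. One correction, though: your closing remark that noetherianity enters only through Lemma \ref{lemm.nilrad} is not accurate. The ``standard fact'' you invoke --- that a polynomial ring in $n$ variables over $B$ which is generated over $B$ by $n$ elements is a polynomial ring on those elements --- is exactly the point where the paper uses noetherianity a second time: one considers the $B$-algebra endomorphism $f$ of $B[s_1,\dots,s_n]$ with $f(s_i)=\phi(t_i)$ (well defined and Poisson because each $\phi(t_i)$ lies in $\pcnt(B)[s_1,\dots,s_n]$), observes that detectability makes $f$ surjective, and then concludes that $f$ is an automorphism because $B[s_1,\dots,s_n]$ is noetherian, hence Hopfian (here $B$ is noetherian by Lemma \ref{lemm.invariso}(2)). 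For a general non-noetherian $B$ the algebraic independence of the $\phi(t_i)$ is not free, so you should either restrict the claim to the noetherian setting of the theorem or supply the Hopfian argument explicitly.
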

\begin{proof}
(1) By Lemma \ref{lemm.nilrad}, $A$ is always (strongly) Poisson detectable. Let $\phi: A[t_1,\dots,t_n]\xrightarrow{\sim} B[s_1,\dots,s_n]$ be an isomorphism for some Poisson algebra $B$ and some $n\ge 1$. Since $\phi$ sends $t_1,\dots,t_n$ to $\pcnt(B)[s_1,\dots,s_n]$, there is a natural map 
$$f: B[s_1,\dots,s_n]\twoheadrightarrow B[\phi(t_1),\dots,\phi(t_n)]\subseteq B[s_1,\dots,s_n],\ \text{via}\ f(s_i)=\phi(t_i)\ \text{for all}\ 1\le i\le n.$$
By the fact that $A$ is (strongly) detectable, $B[\phi(t_1),\dots,\phi(t_n)]=B[s_1,\dots,s_n]$. Then $f$ is a surjective endomorphism of $B[s_1,\dots,s_n]$. By Lemma \ref{lemm.invariso}(2), $B$ is noetherian and so is $B[s_1,\dots,s_n]$. Since noetherian rings are Hopfian, $f$ is an automorphism by \cite[Proposition(i)]{Hir}. Therefore
\begin{align*}
    A\cong A[t_1,\dots,t_n]/(t_1,\dots,t_n) 
    &\xrightarrow{\phi} B[s_1,\dots,s_n]/(\phi(t_1),\dots,\phi(t_n)) \\ 
    &\xrightarrow{f}B[\phi(t_1),\dots,\phi(t_n)]/(\phi(t_1),\dots,\phi(t_n))\cong B.
\end{align*}
So, $A$ is (strongly) Poisson cancellative.

(2) is from (1) and Lemma \ref{lemm.retrdc}(3). 

(3) is from (3) and Lemma \ref{lemm.retrdc}(2).
\end{proof}

\begin{corollary}\label{lemm.artincan}
Let $A$ be a noetherian Poisson algebra. If $A$ or $A_{red}$ has artinian Poisson center, then $A$ is strongly Poisson cancellative. As a consequence, any artinian Poisson algebra is strongly Poisson cancellative. 
\end{corollary}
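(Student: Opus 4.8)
The plan is to reduce the statement of Corollary \ref{lemm.artincan} to Theorem \ref{thm.dectcan} by showing that a noetherian Poisson algebra with artinian Poisson center is (strongly) $\pcnt$-detectable, and in fact (strongly) $\pcnt$-retractable. The workhorse will be the behavior of the Poisson center under polynomial extensions together with the structure theory of commutative artinian rings. Recall that $\pcnt(A[t_1,\dots,t_n]) = \pcnt(A)[t_1,\dots,t_n]$ by Lemma \ref{lem.cnt}, so a Poisson isomorphism $\phi\colon A[t_1,\dots,t_n]\xrightarrow{\sim} B[s_1,\dots,s_n]$ restricts to an algebra isomorphism $\pcnt(A)[t_1,\dots,t_n]\xrightarrow{\sim}\pcnt(B)[s_1,\dots,s_n]$. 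Thus $\pcnt(B)[s_1,\dots,s_n]$ is a polynomial ring in $n$ variables over the artinian ring $\pcnt(A)$, so by Lemma \ref{lemm.invariso}(4) (or a direct Krull-dimension count) $\pcnt(B)$ is itself artinian; hence we may assume at the outset that both Poisson centers are artinian commutative $\kk$-algebras.

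First I would handle the case where the Poisson centers are local artinian: $Z:=\pcnt(A)$ has a nilpotent maximal ideal $\fm$ with $Z/\fm$ a field, and likewise for $W:=\pcnt(B)$. The point is that $Z[t_1,\dots,t_n]\cong W[s_1,\dots,s_n]$ forces $Z\cong W$ and forces the images $\phi(t_i)$ to generate the polynomial part. Concretely, the units of $Z[t_1,\dots,t_n]$ are exactly $Z^{\times} + N$ where $N$ is the nilradical, the nilradical is $\fm[t_1,\dots,t_n]$, and modding out by it gives a polynomial ring over the residue field; an element of $W[s_1,\dots,s_n]$ that maps to a degree-one element of this residue polynomial ring must be a variable up to a unit and a nilpotent shift, so one can show $W[s_1,\dots,s_n] = W[\phi(t_1),\dots,\phi(t_n)]$, i.e. $A$ is (strongly) $\pcnt$-detectable. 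This is precisely the argument that an artinian commutative ring is strongly detectable/retractable as an algebra, so really the cleanest route is to invoke Lemma \ref{lemm.retrdc}(4)--(5): it suffices to prove that an artinian commutative $\kk$-algebra is (strongly) retractable (equivalently detectable) as an ordinary algebra, and then $A$ inherits (strong) $\pcnt$-detectability and therefore, by Theorem \ref{thm.dectcan}(2), (strong) Poisson cancellativity.

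So the real content is the commutative-algebra fact: \emph{if $Z$ is artinian commutative and $Z[t_1,\dots,t_n]\cong W[s_1,\dots,s_n]$ as $\kk$-algebras, then the isomorphism carries $W$ onto a subring with $s_i$ expressible polynomially over the image of $Z$}. Decompose $Z = \prod_k Z_k$ into a finite product of local artinian rings; the idempotents are preserved by the isomorphism, so this reduces to the local case, which is the paragraph above. The $A_{red}$ variant follows from Lemma \ref{lemm.nilrad}: if $A_{red}$ has artinian Poisson center then $A_{red}$ is (strongly) Poisson detectable by the above, hence so is $A$, hence $A$ is (strongly) Poisson cancellative by Theorem \ref{thm.dectcan}(1). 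Finally, the "as a consequence" clause is immediate: if $A$ is artinian then $A$ is noetherian and $\pcnt(A)\subseteq A$ is a subalgebra of an artinian ring, but more simply $\pcnt(A)$ is itself artinian (it is a $\kk$-subalgebra of the finite-dimensional — or at least artinian — ring $A$, and a subring of an artinian ring that arises as a Poisson center is artinian; or just note $A$ artinian $\Rightarrow \Kdim A = 0 \Rightarrow \Kdim \pcnt(A) = 0$ together with noetherianity of $\pcnt(A)$), so the first part applies with $A$ in place of the center hypothesis.

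The main obstacle I anticipate is the commutative fact that an artinian ring is retractable/detectable as a polynomial coefficient ring — specifically, controlling the images $\phi(t_i)$ inside $W[s_1,\dots,s_n]$ when $W$ has nilpotents. The degree filtration argument needs care: one must argue that $\phi(t_i)$, though possibly not literally linear in the $s_j$, has "leading part" a unit combination of the $s_j$ modulo $W$-coefficients and nilpotents, so that the $W$-subalgebra they generate recovers all of $W[s_1,\dots,s_n]$. This is where the nilpotency of the maximal ideal and the Hopfian/noetherian machinery (as already used in the proof of Theorem \ref{thm.dectcan}) do the heavy lifting; once detectability is in hand, cancellativity is a formal consequence of Theorem \ref{thm.dectcan}.
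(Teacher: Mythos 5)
Your overall skeleton matches the paper's: reduce to showing $A$ is (strongly) $\pcnt$-detectable, feed this into Theorem \ref{thm.dectcan}, handle the nilradical of the center by the lifting argument of Lemma \ref{lemm.nilrad}, and split the semisimple part into pieces via idempotents (Lemma \ref{lemm.directprop}). The one genuine difference is how the semisimple quotient is treated: the paper passes to a base field extension so that $\pcnt/N$ becomes a finite direct sum of copies of $\kk$ (using Lemma \ref{lemm.dectext} to descend detectability back down), whereas you decompose the artinian ring into local pieces and argue with the residue fields directly. Your route works, and in fact your worry about controlling ``leading parts'' of the $\phi(t_i)$ is unnecessary: once you are in the reduced local case the center is a field $K$, the induced map $K[t_1,\dots,t_n]\to L[s_1,\dots,s_n]$ carries $K$ onto $L$ (units go to units), and surjectivity alone gives $L[s_1,\dots,s_n]=L[\bar\phi(t_1),\dots,\bar\phi(t_n)]$; no degree bookkeeping is needed. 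Note also that detectability, not retractability, is all you need here --- an artinian center is generally not retractable as an algebra, so aiming for Lemma \ref{lemm.retrdc}(4) is the wrong target; (5) is the right one.

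There is one soft spot: in the ``as a consequence'' clause you assert that $A$ artinian implies $\pcnt(A)$ artinian, justified by ``$\pcnt(A)$ is a subalgebra of an artinian ring'' or by an unproven noetherianity of $\pcnt(A)$. Subrings of artinian rings need not be artinian, and the Poisson center of a noetherian (even artinian) algebra is not automatically noetherian, so this step is not established as written. The paper avoids the issue entirely: for artinian $A$ it runs the same argument on $A$ itself ($A/N(A)$ is a finite product of fields, hence strongly detectable; lift through the nilradical by Lemma \ref{lemm.nilrad}; conclude by Theorem \ref{thm.dectcan}(1)), never invoking the center. You should do the same, or else supply an actual proof that the Poisson center of an artinian Poisson algebra is artinian.
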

\begin{proof}
Suppose $A$ has artinian Poisson center $\pcnt$ with nilradical $N$. In the artinian case, $N$ is just the Jacobson radical of $\pcnt$. By a possible base field extension, we can assume that $\pcnt/N$ is finite direct sum of $\kk$. By Lemma \ref{lemm.dectext} and Lemma \ref{lemm.directprop}(4), $\pcnt/N$ is (strongly) detectable. Hence $\pcnt$ is (strongly) detectable by Lemma \ref{lemm.nilrad}. Then by Lemma \ref{lemm.retrdc}(5) $A$ is (strongly) $\pcnt$-detectable. Thus $A$ is (strongly) Poisson cancellative by Theorem \ref{thm.dectcan}(3). Similarly, if $A_{red}$ has artinian Poisson center, we can prove that $A_{red}$ is strongly Poisson detectable. So the result follows by Theorem \ref{thm.dectcan}(1). 

The same argument applies to any artinian Poisson algebra. 
\end{proof}

When the Poisson center of a Poisson algebra is trivial, we can push our result a little bit further in terms of being universally Poisson cancellative. This result and its proof are Poisson analogues of \cite[Proposition 1.3]{BZ2}.

\begin{theorem}
\label{thm.univ}
Let $A$ be a Poisson algebra with trivial Poisson center.
Then $A$ is universally Poisson cancellative.
\end{theorem}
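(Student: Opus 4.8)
The plan is to mimic the proof of \cite[Proposition 1.3]{BZ2}, using the Poisson center in place of the associative center, and exploiting Lemma \ref{lem.cnt} together with Lemma \ref{lemm.invariso} throughout. Suppose $A \tensor R \iso B \tensor R$ as Poisson algebras, where $R$ is a finitely generated commutative domain with trivial Poisson bracket admitting a Poisson ideal $I$ with $\kk \to R \to R/I$ an isomorphism. Write $\phi : A \tensor R \xrightarrow{\sim} B \tensor R$. First I would record that by Lemma \ref{lem.cnt},
\[
\pcnt(A \tensor R) = \pcnt(A) \tensor R = \kk \tensor R = R,
\qquad
\pcnt(B \tensor R) = \pcnt(B) \tensor R,
\]
since $A$ has trivial Poisson center. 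Hence $\phi$ restricts to a Poisson algebra isomorphism $R = \pcnt(A \tensor R) \xrightarrow{\sim} \pcnt(B) \tensor R$; in particular $\pcnt(B) \tensor R$ is isomorphic (as a commutative algebra, and with trivial bracket) to the domain $R$, so $\pcnt(B) \tensor R$ is a domain, forcing $\pcnt(B)$ to be a domain as well. The key point, as in \cite{BZ2}, is that the composite $R \cong \pcnt(B) \tensor R$ must identify the two copies of $R$ closely enough that we may pass to a common quotient: using the Poisson ideal $I \subset R$ with $R/I \iso \kk$, the ideal $\phi(I (A\tensor R))$ is a Poisson ideal of $B \tensor R$ lying inside $\pcnt(B)\tensor R$, and one checks $(B \tensor R)/\phi(I(A\tensor R)) \iso (A \tensor R)/I(A\tensor R) \iso A$.

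The remaining task is to identify that same quotient of $B \tensor R$ with $B$. Here I would argue that the contraction of $\phi(I(A\tensor R))$ along the embedding $R \hookrightarrow \pcnt(B)\tensor R$ corresponds, under the isomorphism $\pcnt(B)\tensor R \cong R$, to a Poisson ideal $J$ of $\pcnt(B)$; more precisely, the composite $\kk \to R \xrightarrow{\phi} \pcnt(B)\tensor R$ composed with the projection $\pcnt(B)\tensor R \to \pcnt(B)\tensor (R/I) = \pcnt(B)$ is a $\kk$-algebra map whose kernel we analyze. The crucial structural fact — exactly the heart of the Bell--Zhang argument — is that in $\pcnt(B) \tensor R$ the subalgebra $\phi(R)$ and the subalgebra $\pcnt(B) \tensor \kk$ together with the relation $R/I \iso \kk$ force $\pcnt(B) \cong \kk$, i.e. $B$ itself has trivial Poisson center. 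Granting this, $\pcnt(B)\tensor R = R$, so $\phi$ restricts to an automorphism of $R$, whence $\phi$ descends modulo $I$ to a Poisson isomorphism $A \iso A \tensor R / I(A\tensor R) \xrightarrow{\sim} B \tensor R / I(B\tensor R) \iso B$, as desired.

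To see why $\pcnt(B) = \kk$: the isomorphism $\phi|$ gives $R \iso \pcnt(B)\tensor R$, so $\GKdim R = \GKdim \pcnt(B) + \GKdim R$ by additivity of GK dimension over tensor products of affine commutative algebras, forcing $\GKdim \pcnt(B) = 0$; combined with $\pcnt(B)$ being a domain (shown above) and affine — it is affine since it is a direct summand of the affine-looking factor, or more safely one uses that $\pcnt(B)\tensor R$, being $\iso R$, is noetherian and hence $\pcnt(B)$ is noetherian and zero-dimensional, so artinian — we get $\pcnt(B)$ is a finite field extension of $\kk$, and after the harmless base-field extension permitted by Lemma \ref{lemm.dectext}-type reductions (or by noting $R$ geometrically connected forces $\pcnt(B) = \kk$ directly from $R/I \iso \kk$) we conclude $\pcnt(B) = \kk$.

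\textbf{Main obstacle.} The delicate step is the last one — showing $\pcnt(B) = \kk$ rather than merely a finite extension of $\kk$, and making rigorous the claim that the two embeddings of $R$ (the structural one $R = \kk \tensor R$ and the image $\phi(R)$) inside $\pcnt(B)\tensor R$ can be reconciled so that reduction modulo $I$ really does land one on $B$. In \cite{BZ2} this is handled by a careful localization/completion argument at the ideal $I$; I expect the Poisson version to go through verbatim once one checks that every ideal and map in sight is automatically a Poisson ideal or Poisson map — which it is, because $R$ and hence $\pcnt(B)\tensor R$ carry the trivial bracket, so Poisson-compatibility imposes no extra constraint on this part of the argument. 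Everything else (GK-dimension bookkeeping, the identification of quotients, descent of $\phi$ modulo $I$) is routine.
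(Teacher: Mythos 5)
Your overall strategy coincides with the paper's: restrict $\phi$ to Poisson centers via Lemma \ref{lem.cnt}, deduce that $\pcnt(B)$ is a domain of Krull dimension zero and hence a field, show $\pcnt(B)=\kk$, and then reduce modulo the $\kk$-point $I$. The step you explicitly leave open --- why $\pcnt(B)=\kk$ rather than merely an algebraic field extension of $\kk$ --- is a genuine gap as written, and neither of your proposed repairs is the right one. A base-field extension in the style of Lemma \ref{lemm.dectext} does not help: that lemma transfers detectability, not cancellativity, and an isomorphism obtained after extending scalars would not descend to $A\iso B$. Likewise no localization or completion at $I$ is needed. The missing observation is elementary and is exactly where the hypothesis ``$\kk\to R\to R/I$ is an isomorphism'' from Definition \ref{def.can} gets used: composing
\[
\pcnt(B)\;\hookrightarrow\; \pcnt(B)\tensor R \;\xrightarrow{\ \phi^{-1}\ }\; R \;\twoheadrightarrow\; R/I=\kk
\]
gives a $\kk$-algebra homomorphism from the field $\pcnt(B)$ to $\kk$. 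Its kernel is a proper ideal of a field, hence zero, so $\pcnt(B)$ embeds into $\kk$ over $\kk$ and therefore equals $\kk$. This one line closes the argument; it is how the paper's proof proceeds.

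Once $\pcnt(B)=\kk$ is established, your concluding reduction is essentially correct, modulo one slip: $\phi$ carries $I$ to an ideal $\phi(I)$ of $R=\pcnt(B\tensor R)$ that need not equal $I$, so the relevant quotient is $(B\tensor R)/\phi(I)(B\tensor R)\iso B\tensor(R/\phi(I))$, not $(B\tensor R)/I(B\tensor R)$; one still gets $B$ because $R/\phi(I)\iso R/I=\kk$. Your intermediate claims (that $\pcnt(B)$ is a domain of GK dimension zero, hence a field) agree with the paper's proof and are fine.
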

\begin{proof}
Let $R$ be an affine commutative domain equipped with a trivial Poisson bracket. Assume $R/I = \kk$ for some (Poisson) ideal $I \subset R$ and suppose $\phi:A \tensor R \xrightarrow{\sim} B \tensor R$ is a Poisson algebra isomorphism for some Poisson algebra $B$.
Because $\pcnt(A)=\kk$, then $\pcnt(A \tensor R) = R$ and $\pcnt(B \tensor R) = \pcnt(B) \tensor R$. Since $\phi$ restricts to an isomorphism of the Poisson centers, we have $R \iso \pcnt(B) \tensor R$. Thus, $\pcnt(B)$ is a domain with $\Kdim(\pcnt(B))=0$. It follows that $\pcnt(B)$ is a field and since $I$ is a Poisson ideal such that $R/I = \kk$, then $\pcnt(B)=\kk$ and so $\pcnt(B \tensor R) = R$. Then the induced isomorphism $R=\pcnt(A \tensor R) \xrightarrow{\sim} \pcnt(B \tensor R)=R$ implies that $R/\phi(I)=\kk$. So $A \iso A \tensor (R/I) \iso B \tensor (R/\phi(I)) \iso B$.
\end{proof}

\begin{corollary}
\label{cor.cancel1}
Let $\text{char}(\kk)=0$ and let $A$ be an affine Poisson domain of Krull dimension two. If $A$ has nontrivial Poisson bracket, then $A$ is strongly Poisson cancellative. Moreover, if $\kk$ is algebraically closed, then $A$ is universally Poisson cancellative.
\end{corollary}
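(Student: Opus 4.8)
The plan is to reduce the statement of Corollary \ref{cor.cancel1} to Theorem \ref{thm.univ} (for the algebraically closed case) and to Corollary \ref{lemm.artincan} (for the strongly cancellative case), by showing that an affine Poisson domain $A$ of Krull dimension two with nontrivial Poisson bracket has trivial (or at least artinian) Poisson center. First I would observe that $\pcnt(A)$ is a subalgebra of $A$, hence a domain, and since the bracket on $A$ is nontrivial we must have $\pcnt(A) \subsetneq A$, so $\Kdim(\pcnt(A)) \leq 1$. If $\Kdim(\pcnt(A)) = 0$, then $\pcnt(A)$ is a field that is also affine (one should check it is affine, or pass to the field of fractions argument) and finite-dimensional over $\kk$; since $\kk$ is algebraically closed and $\ch(\kk)=0$, this forces $\pcnt(A) = \kk$, and Theorem \ref{thm.univ} gives universal Poisson cancellativity. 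So the crux is to rule out $\Kdim(\pcnt(A)) = 1$.

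To eliminate the case $\Kdim(\pcnt(A)) = 1$: suppose $z \in \pcnt(A)$ is transcendental over $\kk$. Then $A$ is an affine domain of Krull dimension two containing the central polynomial subring $\kk[z]$, and $\{z, -\} = 0$. The idea is that $A$ is then "fibered" over $\Spec \kk[z]$ with one-dimensional Poisson fibers, and a one-dimensional affine domain over a field carries only the trivial Poisson bracket (any derivation of a curve that is a Poisson bracket of the form $\{a,-\}$ with the Jacobi identity in one variable forces vanishing — more precisely, on $\kk(z)[A]$, localizing at $\kk[z]\setminus\{0\}$ yields a Poisson algebra of Krull dimension one over the field $\kk(z)$, and such algebras have trivial bracket). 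Concretely, localize $A$ at $S = \kk[z] \setminus \{0\}$ to get $A_S$, a Poisson domain of Krull dimension one over the field $\kk(z) = \pcnt(A)_S$ (using that $z$ is Poisson central so the bracket extends to the localization). A Poisson bracket on a one-dimensional affine domain over a field must be trivial: for a domain of Krull dimension one, after normalizing/going to the smooth locus, $\Omega$ is rank one, so $\bigwedge^2$ of the tangent sheaf vanishes and there are no nonzero bivectors, hence no nontrivial bracket. Therefore $\{A_S, A_S\} = 0$, and since $A \hookrightarrow A_S$, we get $\{A,A\} = 0$, contradicting the nontriviality hypothesis.

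For the strongly Poisson cancellative statement over an arbitrary $\ch(\kk)=0$ field (not assumed algebraically closed), I would use the same dichotomy: the only surviving case is $\Kdim(\pcnt(A)) = 0$, so $\pcnt(A)$ is a field, finitely generated as a $\kk$-algebra, hence a finite field extension of $\kk$; in particular it is artinian. Then $A$ is noetherian (affine over a field) with artinian Poisson center, so Corollary \ref{lemm.artincan} applies and gives that $A$ is strongly Poisson cancellative.

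The main obstacle is the claim that a Poisson bracket on an affine domain of Krull dimension one over a field is necessarily trivial — this is the geometric heart of the argument and needs care when the curve is singular. One clean way around the singularity issue: if $A_S$ had a nontrivial bracket $\{a,b\} \neq 0$, then $\{a,-\}$ would be a nonzero derivation of the one-dimensional domain $A_S$ over $\kk(z)$; pass to the normalization $\widetilde{A_S}$ (a Dedekind domain, finite over $A_S$ since $\ch = 0$), to which the derivation extends, and then to the fraction field $K = \operatorname{Frac}(A_S)$, which is a finite extension of $\kk(z)$; a derivation of $K$ trivial on $\kk(z)$ must be zero because $K/\kk(z)$ is separably algebraic (characteristic zero), so $\Omega_{K/\kk(z)} = 0$ and $\der_{\kk(z)}(K) = 0$. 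Hence $\{a,-\} = 0$ for all $a$, i.e. the bracket on $A_S$ is trivial, a contradiction. This derivation-theoretic reformulation avoids having to argue directly about bivectors on singular curves and should be the smoothest route.
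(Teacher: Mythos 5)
Your overall strategy --- reduce to Theorem \ref{thm.univ} and Corollary \ref{lemm.artincan} by showing that the Poisson center is algebraic over $\kk$, using the fact that a Poisson domain of relative dimension one over a Poisson-central subfield must have trivial bracket --- is essentially the paper's (the paper localizes at all of $\pcnt(A)$ and invokes the inequality $\GKdim(A)\ge \GKdim_F(A_Z)+\GKdim(Z)$, whereas you localize at $\kk[z]$ for a single transcendental central element $z$; either reduction would be fine). But the step you call the geometric heart is argued incorrectly. Your $K=\operatorname{Frac}(A_S)=\operatorname{Frac}(A)$ has transcendence degree $2$ over $\kk$, hence transcendence degree $1$ over $\kk(z)$: it is \emph{not} a finite or separably algebraic extension of $\kk(z)$. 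Consequently $\Omega_{K/\kk(z)}$ is one-dimensional over $K$ rather than zero, and $\der_{\kk(z)}(K)\neq 0$ --- a curve over $\kk(z)$ has plenty of vector fields (your claim would imply that $\kk(z)[w]$ admits no nonzero $\kk(z)$-derivation, contradicting $d/dw$). So the ``clean derivation-theoretic route'' collapses: one cannot conclude $\{a,-\}=0$ merely from its being a $\kk(z)$-derivation of a one-dimensional domain.

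What actually kills the bracket is antisymmetry, i.e.\ the fact that $\{-,-\}$ is an alternating \emph{bi}derivation. Pick $w\in K$ transcendental over $\kk(z)$, so that $K$ is algebraic over $\kk(z)(w)$. Since $\{w,w\}=0$, the derivation $\{w,-\}$ annihilates $\kk(z)(w)$, and applying it to the minimal polynomial of any $x\in K$ over $\kk(z)(w)$ forces $\{w,x\}=0$ (this is exactly the minimal-polynomial computation in the paper's proof); then $w$ is Poisson central in $K$, so every $\{y,-\}$ annihilates $\kk(z)(w)$ and the same argument gives $\{y,x\}=0$ for all $x$. Equivalently, your first (bivector) sketch is the correct one: an alternating $\kk(z)$-bilinear biderivation of $K$ lies in $\operatorname{Hom}_K\bigl(\bigwedge^2\Omega_{K/\kk(z)},K\bigr)=0$ because $\Omega_{K/\kk(z)}$ has rank one, and working over the fraction field makes the singularity worry moot. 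Two smaller points: $\pcnt(A)\subsetneq A$ does not imply $\Kdim\pcnt(A)\le 1$ (consider $\kk[x^2,xy,y^2]\subsetneq\kk[x,y]$), though this is not load-bearing since your localization argument applies to any transcendental central element; and $\pcnt(A)$ need not be finitely generated over $\kk$, but any field is artinian, so Corollary \ref{lemm.artincan} still applies once you know $\pcnt(A)$ is a field.
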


\begin{proof}
We first show that the Poisson bracket of any affine Poisson integral domain $B$ of Krull dimension at most one has to be trivial. Take $D$ to be the quotient field of $B$, which is a Poisson field after extending the Poisson bracket of $B$ to $D$. Therefore, $D$ has transcendence degree at most one by \cite[Chapter 5, \S14]{MA}. We treat the case when $D=\kk(z)(x_1,\dots,x_n)$ has transcendence degree one. The transcendence degree zero case can be proved similarly. Suppose $\{z,x_i\}=d\neq 0$ for some $x_i$. Consider the minimal polynomial of $x_i$ over $\kk(z)$ as $x^m+a_{m-1}x^{m-1}+\cdots+ a_0=0$ for some $m\ge 1$ and $a_j\in \kk(z)$. Applying the derivation $\{z,-\}$ to the above minimal polynomial and noticing that $\{z,a_j\}=0$, we get
\[(x^{m-1}+(m-1)a_{m-1}/ m\,x^{m-2}+\cdots+a_1/m)d=0.\] 
Since $d\neq 0$, $x^{m-1}+(m-1)a_{m-1}/ m\,x^{m-2}+\cdots+a_1/m=0$, contradicting the minimality of $m$. So, $\{z,x_1\}=\cdots=\{z,x_n\}=0$ and $z\in \pcnt(D)$. It remains to show that $x_1,\dots,x_n\in \pcnt(D)$. 
If $\{y,x_i\}\neq 0$ for some $y\in D$ and $x_i$ we still apply the derivation $\{y,-\}$ to the minimal polynomial of $x_i$ over $k(z)$. Since $\{y,z\}=0$, we get a similar contradiction of the minimal degree as before. So, $D$ has trivial Poisson bracket and hence $B$ has trivial Poisson bracket.   

Now let $Z=\pcnt(A)$. Denote by $A_Z$ the localization of $A$ at $Z$ and by $F$ the quotient field of $Z$. Since $A$ is an integral domain, then $A_Z \neq 0$. As a consequence, $\GKdim(A) \geq \GKdim_F(A_Z) + \GKdim(Z)$ \cite[Corollary 2]{SZ}. As $A$ is affine over $\kk$, one sees that $A_Z$ is affine over $F$. Hence $\GKdim(A)=\Kdim(A)=2$ and $\GKdim_F(A_Z)=\Kdim(A_Z)$ by \cite[Theorem 4.5]{GL}. By previous discussion, one sees that $\Kdim(A_Z)\ge 2$ since it has nontrivial Poisson bracket. Hence, $\GKdim(Z)=0$. So $\pcnt(A)=Z$ is an algebraic field extension of $\kk$ and is artinian. Hence $A$ is strongly Poisson cancellative by Corollary \ref{lemm.artincan}(1). 

In particular, if $\kk$ is algebraically closed the above argument shows that $\pcnt(A)=\kk$. We now can apply Theorem \ref{thm.univ} to conclude the proof.
\end{proof}

\begin{example}
\label{ex.cancel1}
As applications of Corollary \ref{lemm.artincan} and Theorem \ref{thm.univ}, we have the following examples. 
\begin{enumerate}
\item Let $\text{char}(\kk)=0$. The {\sf $n$th Poisson symplectic algebra} is $P_n=\kk[x_1,\dots,x_n,y_1,\dots,y_n]$
with the Poisson bracket
$$\{x_i,y_j\}=\delta_{ij},\ \{x_i,x_j\}=0,\ \{y_i,y_j\}=0.$$
A straightforward check shows $\pcnt(P_n)=\kk$ so $P_n$ is universally Poisson cancellative. 
\item Let $\text{char}(\kk)=0$ and $q\in \kk^\times$. Let $A=\kk[x_1,\dots,x_n]$ be the quadratic Poisson polynomial algebra with the Poisson bracket given by $\{x_i,x_j\}=qx_ix_j$ for all $1\le i<j\le n$. If $n\ge 2$, then $\pcnt(A)=\kk$ and $A$ is universally Poisson cancellative.
\item Let $\text{char}(\kk)=p>0$ and $\mathfrak g$ be a finite-dimensional restricted Lie algebra over $\kk$. Write $\mathfrak g=\text{span}(x_1,\dots,x_n)$ and $A=\kk[x_1,\dots,x_n]/(x_1^p,\dots,x_n^p)$. Then $A$ is a finite-dimensional Poisson algebra with the Kirillov-Kostant Poisson bracket such that $\{x_i,x_j\}=[x_i,x_j]$ for all $1\le i,j\le n$. So $A$ is strongly Poisson cancellative. 
\end{enumerate}
\end{example}

\section{The Makar-Limanov invariant and retractability}
\label{sec.ML}
In order to study the retractable property of Poisson algebras, we introduce an analogue of the Makar-Limanov invariant related to (higher) locally nilpotent Poisson derivations.

\begin{definition}
Let $A$ be a Poisson algebra with Poisson center $\pcnt$, let $0\neq d\in \pcnt$, and let $*$ be either blank or $\H$.
\begin{enumerate}
   \item For a higher Poisson derivation $\partial=(\partial_i)_{i=0}^\infty$, the kernel of $\partial$ is defined to be 
    \[ \ker \partial=\bigcap_{i\ge 1} \ker \partial_i.\]
  \item  The set of all {\sf (higher) locally nilpotent Poisson derivations} of $A$ with respect to $d$ is denoted 
    \[\plndds(A) = \left\{ \partial \in \plnds(A) : d\in \ker \partial\right\}.\]
\item The {\sf Poisson Makar-Limanov\,$\!^{*}_{d}$ invariant} of $A$ is defined to be
\[ \pmlds(A) = \bigcap_{\partial \in \plndds(A)} \ker(\partial).\]
     \item We say $A$ is {\sf $\plndds$-rigid} if $A=\pmlds(A)$, or equivalently, $\plndds(A)=\{0\}$. Moreover, we say $A$ is {\sf strongly $\plndds$-rigid} if $A\subseteq \pmlds(A[t_1,\dots,t_n])$, for all $n\ge 1$. 
     \item We say $A$ is {\sf $\plnddps$-rigid} if $\pcnt\subseteq\pmlds(A)$. Moreover, we say $A$ is {\sf strongly $\plnddps$-rigid} if $\pcnt\subseteq \pmlds(A[t_1,\dots,t_n])$ for all $n\ge 1$.
\end{enumerate}
\end{definition}

\begin{remark}
\begin{enumerate}
\item Throughout, $*$ will be either blank or $\H$. When $*$ is blank, we will further assume that $\text{char}(\kk)=0$. Note that there is an embedding from $\plndd(A)$ to $\plnddh(A)$ given by $\partial\mapsto (\partial^i/i!)_{i=0}^\infty$ when $\text{char}(\kk)=0$.
\item Note that $1\in \ker \partial$ for any $\partial\in \plnds(A)$. Hence when $d=1$ we will simply omit it and write $\plnds$, $\pmls$, (strongly) $\plnds$-rigid, or (strongly) $\plndps$-rigid. 
    \item When $A$ is a commutative algebra with trivial Poisson bracket, then $\pcnt=A$ and we simply write $\lndds$, $\mlds$, or (strongly) $\lndds$-rigid instead of $\plndds$, $\pmlds$, (strongly) $\plndds$ or, $\plnddps$-rigid, respectively. 
\end{enumerate}
\end{remark}

\begin{example}
\label{ex.plnd}
Let $A=\kk[x,y]$ and let $\delta=y \frac{\partial}{\partial x}$.
Clearly, $\delta \in \lnd(A)$. Now consider the Poisson bracket on $A$ given by $\{x,y\}=qxy$ for some $q\in \kk^\times$. One can check that $\delta\notin \pder(A)$. Hence, $\delta \notin \plnd(A)$. Furthermore, one can show that when $\text{char}(\kk)=0$ no nontrivial locally nilpotent derivation of $A$ is a Poisson derivation. It follows that $\plnd(A)=A$ and so $A$ is $\plnd$-rigid.
\end{example}

\begin{lemma}\label{lem:iterative}
Let $\partial:=\{\partial_i\}_{i=0}^\infty$ be a higher Poisson derivation of a Poisson algebra $A$.
\begin{enumerate}
\item Suppose $\partial$ is locally nilpotent. For every $c\in \kk$
there is a Poisson algebra automorphism of $A$, $G_{c\partial}: A\to A$, defined by $a\mapsto \sum_{i=0}^\infty c^i\partial_i(a)$.
\item If $\partial$ is iterative and if for any $a\in A$ there exists $n\ge 0$ such that $\partial_i(a)=0$ for all $i\ge n$, then $G_{\partial,t}: A[t]\to A[t]$ defined by $a\mapsto \sum_{i=0}^\infty \partial_i(a)t^i$ and $t\mapsto t$ is a Poisson algebra automorphism. As a a consequence, $\partial$ is locally nilpotent.
\item Let $G: A[t]\to A[t]$ be a Poisson $\kk[t]$-algebra automorphism. If $G(a)\equiv a \mod(t)$ for all $a\in A$, then $G=G_{\partial,t}$ for some $\partial\in \plndh(A)$.
\item If $\partial\in \plnddh(A)$ for some $0\neq d\in \pcnt(A)$, then $G_{c\partial}(d)=G_{\partial,t}(d)=d$.
\end{enumerate}
\end{lemma}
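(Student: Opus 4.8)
The plan is to prove each of the four parts directly from the definitions, leaning on the fact that a higher derivation satisfying the Leibniz-type rules automatically respects both the algebra and (when it is a higher Poisson derivation) the Poisson structure, so the only substantive points are that the relevant maps are well-defined and bijective.

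For part (1), I would observe that $G_{c\partial}$ is simply the composition of $G_{\partial,t}\colon A[t]\to A[t]$ with the evaluation $t\mapsto c$, i.e.\ $G_{c\partial} = \mathrm{ev}_{t=c}\circ G_{\partial,t}$ restricted to $A\subset A[t]$; since local nilpotence guarantees $G_{\partial,t}$ is a Poisson algebra automorphism of $A[t]$ and the substitution $t\mapsto c$, $t\mapsto 0$ descends appropriately, $G_{c\partial}$ is a Poisson algebra endomorphism of $A$. Its inverse is $G_{(-c)\partial'}$ where $\partial'$ is the "inverse" higher derivation coming from $G_{\partial,t}^{-1}$; more cleanly, one checks $G_{c\partial}$ is the specialization of a $\kk[t]$-automorphism and hence invertible. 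The Leibniz rule $\partial_n(ab)=\sum \partial_i(a)\partial_{n-i}(b)$ collapses, after multiplying by $c^n$ and summing, to multiplicativity of $G_{c\partial}$, and the higher-Poisson-derivation identity collapses the same way to $G_{c\partial}(\{a,b\})=\{G_{c\partial}(a),G_{c\partial}(b)\}$.

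For part (2), the hypothesis is that $\partial$ is iterative with the pointwise finiteness condition but \emph{not} assumed locally nilpotent; I want to show $G_{\partial,t}$ is a Poisson algebra automorphism, which then certifies local nilpotence by definition. Well-definedness and the algebra-homomorphism property follow from the finiteness condition and the Leibniz rule exactly as above, and the higher-Poisson-derivation identity gives compatibility with the bracket on $A[t]$ (recalling $\{t,a\}=0$). For bijectivity I would use iterativity: the condition $\partial_i\partial_j=\binom{i+j}{i}\partial_{i+j}$ is precisely what makes $G_{\partial,t}$ a "one-parameter" map, so that $G_{\partial,s}\circ G_{\partial,t} = G_{\partial,s+t}$ on $A[s,t]$; setting $s=-t$ shows $G_{\partial,t}$ has inverse $G_{\partial,-t}$ (using $\partial_0=\mathrm{id}$), hence it is an automorphism. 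This identity $G_{\partial,s}\circ G_{\partial,t}=G_{\partial,s+t}$ is where iterativity is genuinely used and is the main computational point of the lemma.

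For part (3), given a Poisson $\kk[t]$-algebra automorphism $G$ with $G(a)\equiv a \pmod t$, I would define $\partial_i(a)$ by expanding $G(a)=\sum_{i\ge 0}\partial_i(a)t^i$ (a finite sum for each $a$ since $G(a)\in A[t]$), noting $\partial_0=\mathrm{id}$ by the congruence hypothesis; that $G$ is a $\kk[t]$-algebra map forces the $\partial_i$ to satisfy the higher-derivation Leibniz rule, that $G$ is a Poisson map forces the higher-Poisson-derivation identity, and since $G=G_{\partial,t}$ is by construction an automorphism, $\partial\in\plndh(A)$. Part (4) is immediate: $d\in\ker\partial$ means $\partial_i(d)=0$ for all $i\ge 1$, so $G_{\partial,t}(d)=\partial_0(d)=d$ and likewise $G_{c\partial}(d)=d$ by the defining formulas. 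The only place I expect any friction is the bijectivity in parts (2) and (3) — everywhere else is a mechanical unwinding of the defining identities — so I would record the composition formula $G_{\partial,s}\circ G_{\partial,t}=G_{\partial,s+t}$ once and reuse it; note that for $*=\H$ we do not and cannot appeal to $\exp$, so iterativity is the substitute for the characteristic-zero exponential trick.
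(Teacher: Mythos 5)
Your proposal is correct and follows essentially the same route as the paper: part (1) via descending the automorphism $G_{\partial,t}$ to $A[t]/(t-c)\cong A$ (your evaluation at $t=c$ is the same device), part (2) via the Leibniz rule for multiplicativity and iterativity for invertibility, part (3) by reading off $\partial_i$ from the $t$-expansion of $G(a)$, and part (4) from the definitions. Your packaging of the inverse in (2) as the composition law $G_{\partial,s}\circ G_{\partial,t}=G_{\partial,s+t}$ specialized at $s=-t$ is just a mild reorganization of the paper's direct computation of $G_{\partial,t}\circ G_{\partial,-t}=\mathrm{id}$ via the alternating binomial identity.
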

\begin{proof}
(1) By definition, $G_{\partial,t}$ is a Poisson algebra automorphism of $A[t]$ and $G_{\partial,t}(t-c)=t-c$. Note that $(t-c)$ is a Poisson ideal of $A[t]$. Then it yields a Poisson algebra automorphism of $A[t]/(t-c)$. One sees that the induced automorphism is exactly $G_{c\partial}$. 

(2) First we show that $G_{\partial,t}$ is an algebra automorphism with inverse given by $G_{\partial,-t}$. Since $G_{\partial,t}$ is $\kk[t]$-linear, it suffices to show that, for all $a,b\in A$, 
\begin{align*}
G_{\partial,t}(ab)
    &= \sum_{i=0}^\infty \partial_i(ab)t^i
    = \sum_{i=0}^\infty t^i\left(\sum_{j=0}^i \partial_j(a)\partial_{i-j}(b)\right) \\
    &= \sum_{j=0}^\infty \partial_j(a)t^j\left(\sum_{i=j}^\infty  \partial_{i-j}(b)t^{i-j}\right)
    = G_{\partial,t}(a)G_{\partial,t}(b),
\end{align*}
where all interchanging of summations can be justified by the fact that the sums are actually finite. Moreover, we have
\begin{align*}
G_{\partial,t}\circ G_{\partial,-t}(a)&=G_{\partial,t}\left(\sum_{i=0}^\infty \partial_i(a)(-t)^i\right)=\sum_{i=0}^\infty G_{\partial,t}(\partial_i(a))(-t)^i=\sum_{i=0}^\infty \left(\sum_{j=0}^\infty \partial_j\partial_i(a)t^j\right)(-t)^i\\
&=\sum_{i=0}^\infty \left(\sum_{j=0}^\infty \binom{i+j}{i} \partial_{i+j}(a)t^j\right)(-t)^i=\sum_{i=0}^\infty\sum_{n=i}^\infty \binom{n}{i}(-1)^i \partial_n(a)t^n\\
&=\sum_{n=0}^\infty \partial_n(a)t^n\left(\sum_{i=0}^n (-1)^i \binom{n}{i}\right)=\partial_0(a)=a.
\end{align*}

It remains to show that $G_{\partial,t}$ is a Poisson algebra homomorphism, which follows from, for any $a,b\in A$,
\begin{align*}
G_{\partial,t}(\{a,b\})
&=\sum_{i=0}^\infty \partial_i(\{a,b\})t^i
=\sum_{i=0}^\infty \sum_{j=0}^i \{\partial_j(a),\partial_{i-j}(b)\}t^i
=\sum_{i=0}^\infty \sum_{j=0}^i \{\partial_j(a)t^j,\partial_{i-j}(b)t^{i-j}\} \\
&=\left\{\sum_{i=0}^\infty \partial_i(a)t^i,\sum_{j=0}^\infty \partial_j(b)t^j\right\}
=\{G_{\partial,t}(a),G_{\partial,t}(b)\}.
\end{align*}

(3) Write $G(a)=\sum_{i\ge 0} \partial_i(a)t^i$ for all $a\in A$. In a similar way to the proof of (2), we have that $\partial:=\{\partial_i\}_{i=0}^\infty$ is in $\plndh(A)$.

(4) is clear from the definitions of $G_{c\partial}$ and $G_{\partial,t}$.
\end{proof}

\begin{lemma}
\label{lem.ddx}
Let $A$ be a Poisson algebra, and  $0\neq d\in \pcnt(A)$. 
Then $\pmlds(A[t_1,\dots,t_n])\subseteq A$ for any $n\ge 1$.
\end{lemma}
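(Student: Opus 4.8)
The plan is to exploit the known structure of locally nilpotent (higher) Poisson derivations under adjoining commuting variables. Fix $0 \neq d \in \pcnt(A)$ and $n \geq 1$; we must show that every element of the intersection $\pmlds(A[t_1,\dots,t_n])$ already lies in $A$. First I would observe that $A[t_1,\dots,t_n]$ carries, for each $1 \leq j \leq n$, a distinguished locally nilpotent higher Poisson derivation: namely the one associated to $\partial / \partial t_j$ when $* $ is blank (valid since $\ch \kk = 0$), or more generally the iterative higher derivation $\partial^{(j)}$ with $\partial^{(j)}_i(f) = \tbinom{\deg_{t_j}}{i}$-style coefficients, concretely the higher derivation induced by the Poisson $\kk[t]$-automorphism $t_j \mapsto t_j + t$, fixing $A$ and the other $t_k$. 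One checks directly from \eqref{eq.cext} and the rule $\{t_j, a\} = 0$ that this is a Poisson derivation, that it is locally nilpotent (Lemma \ref{lem:iterative}(2)), and crucially that it kills $d$, since $d \in A \subseteq \ker \partial^{(j)}$. Hence $\partial^{(j)} \in \plndds(A[t_1,\dots,t_n])$ for each $j$.

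Next I would compute the kernel of $\partial^{(j)}$. The automorphism $G_{\partial^{(j)},t}$ acts by $t_j \mapsto t_j + t$ and fixes everything else, so the composite argument of Lemma \ref{lem:iterative} shows $\ker \partial^{(j)} = \bigcap_{i \geq 1}\ker\partial^{(j)}_i$ consists exactly of those elements of $A[t_1,\dots,t_n]$ whose image is independent of $t_j$ after the substitution — equivalently, the polynomials in which $t_j$ does not appear, i.e. $\ker \partial^{(j)} = A[t_1,\dots,\widehat{t_j},\dots,t_n]$. (In the blank case this is just: $\ker(\partial/\partial t_j)$ in characteristic zero is the subring with no $t_j$.) Therefore
\[
\pmlds(A[t_1,\dots,t_n]) \subseteq \bigcap_{j=1}^n \ker \partial^{(j)} = \bigcap_{j=1}^n A[t_1,\dots,\widehat{t_j},\dots,t_n] = A,
\]
which is the desired conclusion.

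The only genuinely delicate point is the identification $\ker \partial^{(j)} = A[t_1,\dots,\widehat{t_j},\dots,t_n]$ in the higher-derivation case when $\ch \kk = p > 0$: there $\partial^{(j)}_i$ for $i$ a power of $p$ does not reduce to iterated first derivatives, so one must argue via the automorphism $G_{\partial^{(j)},t}$ directly rather than through a single $\partial_1$. Concretely, $f \in \ker\partial^{(j)}$ iff $G_{\partial^{(j)},t}(f) = f$ in $A[t_1,\dots,t_n][t]$, and writing $f = \sum_m f_m t_j^m$ with $f_m \in A[t_1,\dots,\widehat{t_j},\dots,t_n]$, the substitution $t_j \mapsto t_j + t$ gives $\sum_m f_m (t_j+t)^m = \sum_m f_m t_j^m$; comparing top-degree-in-$t$ terms forces all $f_m = 0$ for $m \geq 1$. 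This is the step to write out carefully; everything else is a direct application of Lemma \ref{lem:iterative} and the definition of $\pmlds$.
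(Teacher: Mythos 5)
Your proposal is correct and follows essentially the same route as the paper: in both cases one exhibits, for each $j$, the partial Hasse derivative in $t_j$ (equivalently the higher derivation attached to the substitution $t_j\mapsto t_j+t$, with components $t_j^m\mapsto\binom{m}{i}t_j^{m-i}$) as an element of $\plndds(A[t_1,\dots,t_n])$ and intersects the kernels to land in $A$. Your explicit identification of $\ker\partial^{(j)}$ via the fixed points of $G_{\partial^{(j)},t}$ is a slightly more detailed version of the paper's closing ``one can check directly'' step, and is a welcome addition.
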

\begin{proof}
First suppose that $*$ is blank. That $\partial_i=\partial/\partial t_i$ is a $\kk$-derivation of $A[t_1,\hdots,t_n]$ such that $\partial_i(d)=0$ is clear. We claim it is a Poisson derivation.
By induction, it suffices to show that $\delta=\frac{d}{dt}$ is a Poisson derivation of $A[t]$.

Let $p = \sum_{i=0}^n a_i t^i$ and $q = \sum_{j=0}^n b_j t^j$ be in $A[t]$. Then
\begin{align*}
\delta( \{p,q\} )
	&= \sum_{i=0}^n \sum_{j=0}^n \delta( \{ a_i t^i, b_j t^j \} )
	= \sum_{i=0}^n \sum_{j=0}^n \delta( \{ a_i, b_j \} t^{i+j})
	= \sum_{i=0}^n \sum_{j=0}^n (i+j) \{ a_i, b_j \} t^{i+j-1} \\
	&= \sum_{i=1}^n \sum_{j=0}^n i \{ a_i, b_j \} t^{i+j-1}
		+ \sum_{i=0}^n \sum_{j=1}^n j \{ a_i, b_j \} t^{i+j-1} \\
	&= \left\lbrace \sum_{i=1}^n i a_i t^{i-1},\, q \right\rbrace 
		+ \left\lbrace p,\, \sum_{j=1}^n j b_j t^{j-1} \right\rbrace
	= \{ \delta(p), q \} + \{ p, \delta(q) \}.
\end{align*}

Now suppose $*=\H$. For each $1\le i\le n$, define the series of $A$-linear operators $\{\Delta_i^m\}_{m=0}^\infty$, where
\[
\Delta_i^m: t_1^{p_1}\cdots t_n^{p_n}\mapsto 
\begin{cases}
\binom{p_i}{m}t_1^{p_1}\cdots t_i^{p_i-m}\cdots t_n^{p_n} & \text{if}\ p_i\ge m\\ 
0 & \text{otherwise}.
\end{cases}
\]
We show that $\{\Delta_i^m\}_{m=0}^\infty$ belongs to $\plnddh(A[t_1,\dots,t_n])$ for all $1\le i\le n$.
Again, it suffices to show this for $n=1$.
Set $\delta=(\delta_n)_{n=0}^\infty$ for $A[t]$, where $\delta_n(at^m)=\binom{m}{n}at^{m-n}$ for $m\ge n$ and $\delta_n(at^m)=0$ if $n>m$. Then $(\delta_n)_{n=0}^\infty$ is clearly a higher derivation of $A[t]$ and 
\begin{align*}
\delta_n\left(\{at^p, bt^q\}\right)&=\delta_n\left(\{a,b\}t^{p+q}\right)
=\binom{p+q}{n}\{a,b\} t^{p+q-n}
=\left(\sum_{i=0}^n \binom{p}{i}\binom{q}{n-i}\right) \{a,b\} t^{p+q-n}\\
&=\sum_{i=0}^n\left\{ \binom{p}{i}at^{p-i},\ \binom{q}{n-i}bt^{p-(n-i)}\right\}
=\sum_{i=0}^n\{\delta_i(at^p),\ \delta_{n-i}(bt^q)\}.
\end{align*}
Thus, $(\delta_n)_{n=0}^\infty$ is a higher Poisson derivation of $A[t]$.
Moreover, one can easily check that $\{\delta_n\}_{n=0}^\infty$ is iterative and $\delta_n(d)=0$ for all $n\ge 1$. Hence by Lemma \ref{lem:iterative}(2) $\delta=\{\delta_n\}_{n=0}^\infty$ belongs to $\plnddh(A)$.

Finally, in all cases one can check directly $\pmlds(A[t_1,\dots,t_n])\subseteq \bigcap_{1\le i\le n} \ker \delta_i \subseteq A$.
\end{proof}

\begin{lemma}
\label{lem.subalg}
Let $Y= \bigoplus_{i=0}^\infty Y_i$ be an $\NN$-graded affine commutative domain.
If $Z$ is an affine subalgebra of $Y$ containing $Y_0$ such that
$\Kdim Z = \Kdim Y_0 < \infty$, then $Z=Y_0$.
\end{lemma}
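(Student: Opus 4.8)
The plan is to argue by contradiction: if $Z$ properly contains $Y_0$, I will produce inside $Z$ a polynomial ring over $Y_0$ in one variable, which already forces $\Kdim Z > \Kdim Y_0$. The one subtlety to keep in mind is that $Z$ is \emph{not} assumed to be a graded subalgebra of $Y$, so we cannot take homogeneous components inside $Z$; instead we must exploit the top-degree homogeneous component of a single element. I will take $\kk$ to be a field, the case of a commutative domain following the usual reduction to its fraction field.

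First I would record a preliminary point needed at the end: $Y_{\ge 1}:=\bigoplus_{i\ge 1}Y_i$ is an ideal of $Y$ with $Y/Y_{\ge 1}\cong Y_0$, so $Y_0$ is a quotient of the affine algebra $Y$ and hence is itself affine over $\kk$; consequently $\GKdim Y_0=\Kdim Y_0$ by \cite[Theorem 4.5]{GL}, and likewise $\GKdim Z=\Kdim Z$.

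Next, suppose for contradiction that $Z\neq Y_0$, and pick $z\in Z\setminus Y_0$. Write $z=z_0+z_1+\cdots+z_n$ with $z_i\in Y_i$ and $z_n\neq 0$; since $z\notin Y_0$ we have $n\ge 1$. The key claim is that $z$ is transcendental over $Y_0$. Indeed, given any nonzero $p(T)=\sum_{i=0}^m c_iT^i\in Y_0[T]$ with $c_m\neq 0$, the homogeneous component of $p(z)=\sum_i c_iz^i$ in degree $mn$ is exactly $c_mz_n^m$: each summand $c_iz^i$ with $i<m$ has all its homogeneous components in degrees $\le in\le (m-1)n<mn$, while the top homogeneous component of $c_mz^m$ is $c_mz_n^m$. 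Since $Y$ is a domain and $c_m,z_n\neq 0$, we get $c_mz_n^m\neq 0$, hence $p(z)\neq 0$. Therefore the subalgebra $Y_0[z]\subseteq Z$ is a polynomial ring in one variable over $Y_0$.

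Finally, since $\GKdim$ is monotone under passage to subalgebras and $\GKdim(Y_0[z])=\GKdim(Y_0)+1$, we obtain
\[\Kdim Z=\GKdim Z\ \ge\ \GKdim(Y_0[z])=\GKdim(Y_0)+1=\Kdim Y_0+1,\]
contradicting $\Kdim Z=\Kdim Y_0$; hence $Z=Y_0$. (One can equivalently finish via transcendence degrees, using that $\Kdim$ equals $\operatorname{trdeg}_\kk$ of the fraction field for affine domains over a field.) The only genuine obstacle is the degree bookkeeping in the third paragraph — making the ungraded element $z$ yield an honest polynomial subring by isolating its leading term; once that is secured, the remaining steps are routine dimension theory.
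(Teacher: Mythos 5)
Your proof is correct and follows essentially the same route as the paper, which simply cites \cite[Lemma 3.2]{BZ2} (with $\GKdim$ replaced by $\Kdim$); that cited argument is precisely the leading-homogeneous-component trick you use to show an element of $Z\setminus Y_0$ is transcendental over $Y_0$, followed by the dimension count. Your write-up just makes explicit the details the paper leaves to the reference.
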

\begin{proof}
This can be proved using a similar argument as in \cite[Lemma 3.2]{BZ2} after replacing $\GKdim$ with $\Kdim$.
\end{proof}

\begin{lemma}
\label{lemm.cancel}
Let $A$ be a Poisson algebra.
\begin{enumerate}
\item Suppose $A$ is an affine domain. If $\pmls(A[t])=A$, then $A$ is Poisson retractable.
\item Suppose $A$ is an affine domain. If $A$ is strongly $\plnds$-rigid, then $A$ is strongly Poisson retractable. 
\item Suppose $\pcnt(A)$ is an affine domain. If $\pcnt(A)\subseteq \pmls(A[t])$, then $A$ is $\pcnt$-retractable.
\item Suppose $\pcnt(A)$ is an affine domain. If  $A$ is strongly $\plndps$-rigid, then $A$ is strongly $\pcnt$-retractable.
\end{enumerate}
\end{lemma}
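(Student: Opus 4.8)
The plan is to prove all four parts by one uniform argument. Given a Poisson isomorphism $\phi$ between polynomial extensions of $A$ and $B$, the strategy is: (a) produce on the target ring the standard locally nilpotent (higher) Poisson derivations whose common kernel is the coefficient subalgebra; (b) transport them back along $\phi$ to get elements of $\plnds$ of the source polynomial ring, and use the appropriate rigidity hypothesis to get the ``easy'' inclusion $\phi(A)\subseteq B$ (resp.\ $\phi(\pcnt(A))\subseteq \pcnt(B)$); (c) grade the target ring by the degree in the \emph{images} of the new variables and invoke Lemma~\ref{lem.subalg} to upgrade that inclusion to an equality.

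Here is part~(1) in detail; the others are variants. Let $\phi\colon A[t]\xrightarrow{\sim}B[s]$ be a Poisson isomorphism. By (the proof of) Lemma~\ref{lem.ddx}, the operator $\partial/\partial s$ (when $*$ is blank) or the divided-power higher derivation $\{\delta_n\}_{n\ge 0}$ with $\delta_n(bs^m)=\binom{m}{n}bs^{m-n}$ (when $*=\H$) is a locally nilpotent (higher) Poisson derivation of $B[s]$ whose kernel is $B$. Transporting it along $\phi$ gives an element of $\plnds(A[t])$ with kernel $\phi^{-1}(B)$, so $\pmls(A[t])=A$ forces $A\subseteq\phi^{-1}(B)$, i.e.\ $\phi(A)\subseteq B$. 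Since $A[t]$ is a polynomial ring over $A$, the ring $B[s]=\phi(A[t])=\phi(A)[\phi(t)]$ is a polynomial ring over $\phi(A)$ in the single variable $\phi(t)$; grading it by the $\phi(t)$-degree makes $\phi(A)$ its degree-zero component. Moreover $B$ is a retract of $B[s]$ via $s\mapsto 0$, hence an affine domain, and $\Kdim B=\Kdim A=\Kdim\phi(A)<\infty$ because $\Kdim A[t]=\Kdim A+1$ and $A[t]\cong B[s]$. As $\phi(A)\subseteq B\subseteq B[s]$ with $\phi(A)$ the degree-zero part and the Krull dimensions matching, Lemma~\ref{lem.subalg} yields $B=\phi(A)$, so $A$ is Poisson retractable. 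Part~(2) is verbatim the same with $A[t_1,\dots,t_n]\xrightarrow{\sim}B[s_1,\dots,s_n]$: transport the $n$ derivations $\partial/\partial s_j$ (or the $n$ higher derivations $\{\Delta_j^m\}_m$ of Lemma~\ref{lem.ddx}), whose common kernel is $B$, to get $\phi(A)\subseteq B$ from strong $\plnds$-rigidity, then apply Lemma~\ref{lem.subalg} to $B[s_1,\dots,s_n]=\phi(A)[\phi(t_1),\dots,\phi(t_n)]$ graded by total degree in the $\phi(t_j)$.

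For parts~(3) and~(4) — with $n=1$ for~(3) and arbitrary $n$ for~(4) — let $\phi\colon A[t_1,\dots,t_n]\xrightarrow{\sim}B[s_1,\dots,s_n]$ be a Poisson isomorphism. The transport argument above, together with $\pcnt(A)\subseteq\pmls(A[t_1,\dots,t_n])$, gives $\phi(\pcnt(A))\subseteq B$. In addition, $\phi$ carries Poisson centre to Poisson centre and, by Lemma~\ref{lem.cnt}, $\pcnt(A[t_1,\dots,t_n])=\pcnt(A)[t_1,\dots,t_n]$, so $\phi(\pcnt(A))\subseteq\pcnt(B)[s_1,\dots,s_n]$; intersecting and using that $B\cap\pcnt(B)[s_1,\dots,s_n]=\pcnt(B)$ inside $B[s_1,\dots,s_n]$ gives $\phi(\pcnt(A))\subseteq\pcnt(B)$. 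Finally $\pcnt(B)[s_1,\dots,s_n]=\phi(\pcnt(A))[\phi(t_1),\dots,\phi(t_n)]$ is a polynomial ring over $\phi(\pcnt(A))$, and $\pcnt(B)$ is an affine domain (a retract of it) of the same finite Krull dimension as $\pcnt(A)$; so Lemma~\ref{lem.subalg} applied with the $\phi(t_j)$-grading gives $\pcnt(B)=\phi(\pcnt(A))$.

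The main obstacle is the reverse inclusion, and the crux is the choice of grading in step~(c): one must grade the target polynomial ring by the degree in the images $\phi(t_j)$ of the new variables — not the obvious $s_j$-grading — so that its degree-zero part is exactly $\phi(A)$ (resp.\ $\phi(\pcnt(A))$), after which Lemma~\ref{lem.subalg} does the work. The only points needing care are that $B$ (resp.\ $\pcnt(B)$) really is a finitely generated $\kk$-algebra of the expected Krull dimension, which follows from the retraction $s_j\mapsto 0$ and from $B[s_1,\dots,s_n]\cong A[t_1,\dots,t_n]$ (resp.\ $\pcnt(B)[s_1,\dots,s_n]\cong\pcnt(A)[t_1,\dots,t_n]$), and the routine verification that a locally nilpotent (higher) Poisson derivation transports along a Poisson isomorphism; the $*=\H$ case introduces no new ideas but uses the higher-derivation constructions of Lemma~\ref{lem.ddx} in place of ordinary partials.
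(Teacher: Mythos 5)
Your proposal is correct and follows essentially the same route as the paper: the forward inclusion $\phi(A)\subseteq B$ (resp.\ $\phi(\pcnt(A))\subseteq\pcnt(B)$) comes from transporting locally nilpotent (higher) Poisson derivations across $\phi$ together with Lemma \ref{lem.ddx} and the rigidity hypothesis, and the reverse inclusion comes from Lemma \ref{lem.subalg} applied to a polynomial-degree grading. The only cosmetic difference is that you grade the target ring by the $\phi(t_j)$-degree, whereas the paper grades the source ring $A[t_1,\dots,t_n]$ by the $t_j$-degree and takes $Z=\phi^{-1}(B)$; these are equivalent under $\phi$.
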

\begin{proof}
(2) Let $\phi:A[t_1,\hdots,t_n] \xrightarrow{\sim} B[s_1,\hdots,s_n]$ be an isomorphism for some Poisson algebra $B$ and some $n \geq 1$.
By Lemma \ref{lemm.invariso}(3), $\Kdim A =\Kdim B < \infty$. For any locally nilpotent (higher) Poisson derivation $\partial$ of $A[t_1,\dots,t_n]$, 
$\phi \circ \delta \circ \phi\inv$ is again a locally
nilpotent (higher) Poisson derivation of $B[s_1,\cdots,s_n]$.
Reversing this argument shows that $\phi$ induces an isomorphism
\[\phi: A=\pmls(A[t_1,\hdots,t_n]) \xrightarrow{\sim} \pmls(B[s_1,\hdots,s_n])\subseteq B.\]
The last inclusion is due to Lemma \ref{lem.ddx} since $A$ has finite Krull dimension. It follows that $\phi(A) \subseteq B$.
Let $Y=A[t_1,\hdots,t_n]$ with $\deg t_i=1$, $Y_0 = A$, and $Z=\phi\inv(B)$.
Then Lemma \ref{lem.subalg} implies that $\phi\inv(B)=A$.
Thus, $\phi(A)=B$ and $A$ is strongly Poisson retractable. 
The proof of (1) is analogous. 

(4) Let $\phi:A[t_1,\hdots,t_n] \xrightarrow{\sim} B[s_1,\hdots,s_n]$ be an isomorphism for some Poisson algebra $B$ and some $n \geq 1$. Since $\phi$ preserves the Poisson center, it induces an isomorphism 
\[\phi: \pmls(A[t_1,\hdots,t_n])\bigcap \pcnt(A)[t_1,\dots,t_n] \xrightarrow{\sim} \pmls(B[s_1,\hdots,s_n])\bigcap \pcnt(B)[s_1,\dots,s_n].\]
Note that $\pmls(B[s_1,\hdots,s_n])\bigcap \pcnt(B)[s_1,\dots,s_n]\subseteq \pcnt(B)$.
Because $A$ is strongly $\plndps$-rigid, we get $\pcnt(A)=\pmls(A[t_1,\hdots,t_n])\bigcap \pcnt(A)[t_1,\dots,t_n]$. Hence $\phi(\pcnt(A))\subseteq \pcnt(B)$. As in (2), we get $\phi(\pcnt(A))=\pcnt(B)$ and $A$ is strongly $\pcnt$-retractable.
The proof of (3) is analogous. 
\end{proof}

Our next main result shows that $\plnd$-rigidity implies Poisson cancellation, which suggests that the Poisson Makar-Limanov invariant is a useful invariant in the \textbf{PZCP}.

\begin{theorem}\label{thm.plndcan}
Let $A$ be a Poisson algebra. 
\begin{enumerate}
\item Suppose $A$ is an affine domain. If $\pmls(A[t])=A$, then $A$ is Poisson cancellative.
\item Suppose $A$ is an affine domain. If $A$ is strongly $\plnds$-rigid, then $A$ is strongly Poisson cancellative.
\item Suppose $A$ is noetherian and $\pcnt(A)$ is an affine domain. If $\pcnt(A)\subseteq \pmls(A[t])$, then $A$ is Poisson cancellative.
\item Suppose $A$ is noetherian and $\pcnt(A)$ is an affine domain. If $A$ is strongly $\plndps$-rigid, then $A$ is strongly Poisson cancellative.
\end{enumerate}
\end{theorem}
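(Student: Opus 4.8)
The plan is simply to chain together the retractability results of Lemma \ref{lemm.cancel} with the implications already recorded in Lemma \ref{lemm.retrdc} and Theorem \ref{thm.dectcan}; no genuinely new argument is needed, the substance having been front-loaded into those lemmas.

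For (1), I would observe that an affine domain is in particular noetherian, invoke Lemma \ref{lemm.cancel}(1) to deduce from $\pmls(A[t])=A$ that $A$ is Poisson retractable, and then conclude by Lemma \ref{lemm.retrdc}(1) that $A$ is Poisson cancellative. Part (2) is the strong analogue: strong $\plnds$-rigidity gives strong Poisson retractability by Lemma \ref{lemm.cancel}(2), whence strong Poisson cancellativity by Lemma \ref{lemm.retrdc}(1).

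For (3) and (4) the argument passes through the Poisson center rather than through $A$ itself. Assuming $\pcnt(A)$ is an affine domain, Lemma \ref{lemm.cancel}(3) converts the hypothesis $\pcnt(A)\subseteq\pmls(A[t])$ into $\pcnt$-retractability of $A$, and Lemma \ref{lemm.cancel}(4) converts strong $\plndps$-rigidity into strong $\pcnt$-retractability. Since $A$ is assumed noetherian, I would then apply Theorem \ref{thm.dectcan}(3) to conclude that $A$ is, respectively, Poisson cancellative or strongly Poisson cancellative.

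There is no real obstacle here; the only point that demands care is the bookkeeping of hypotheses. Parts (1)--(2) use the direct implication ``(strongly) Poisson retractable $\Rightarrow$ (strongly) Poisson cancellative'' of Lemma \ref{lemm.retrdc}(1), which needs no finiteness assumption, whereas parts (3)--(4) only produce ($\pcnt$-)retractability and must therefore route through Theorem \ref{thm.dectcan}(3), whose proof invokes that noetherian rings are Hopfian --- this is precisely why ``noetherian'' appears in (3)--(4) but not in (1)--(2). The genuine content of the theorem was already absorbed into Lemma \ref{lemm.cancel}, which in turn rests on the explicit (higher) Poisson derivations $\partial/\partial t_i$ of Lemma \ref{lem.ddx} together with the graded-subalgebra dimension count of Lemma \ref{lem.subalg}.
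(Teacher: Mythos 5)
Your proposal matches the paper's proof essentially verbatim: parts (1)--(2) follow from Lemma \ref{lemm.cancel}(1)--(2) together with Lemma \ref{lemm.retrdc}(1), and parts (3)--(4) from Lemma \ref{lemm.cancel}(3)--(4) together with Theorem \ref{thm.dectcan}(3). Your added remark about why the noetherian hypothesis is needed only in (3)--(4) is accurate and consistent with how the paper routes those cases through the Hopfian argument in Theorem \ref{thm.dectcan}.
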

\begin{proof}
(1)-(2) are from Lemma \ref{lemm.cancel}(1)-(2) and Lemma \ref{lemm.retrdc}(1). 

(3)-(4) are from Lemma \ref{lemm.cancel}(3)-(4) and Theorem \ref{thm.dectcan}(3). 
\end{proof}

Our last result in this section is a Poisson analogue of \cite[Theorem 3.6]{BZ2}. We adapt the arguments in \cite[Lemma 3.4 and Lemma 3.5]{BZ2} and make the necessary changes for Poisson algebras. 

\begin{lemma}
\label{lem.embed}
Let $\text{char}(\kk)=0$ and let $A$ be an affine Poisson domain. Denote by $Q(A)$ the fractional quotient of $A$.
Suppose that $A$ is endowed with a nonzero locally nilpotent Poisson derivation $\alpha$. Then the following hold.
\begin{enumerate}
\item $A$ is embedded in the Poisson-Ore extension $E[x;0,\delta_0]_P$ and $E[x;0,\delta_0]_P$ 
is embedded in $Q(A)$, where $E= \{ a \in Q(A) : \alpha(a)=0 \}$
and $\delta_0$ is a Poisson derivation of $E$.
\item $Q(A) = Q(E[x;0,\delta_0]_P)$.
\item The Poisson derivation $\alpha$ can be extended to a locally nilpotent 
Poisson derivation of $E[x;0,\delta_0]_P$
by declaring that $\alpha(E)=0$ and $\alpha(x)=1$.
\end{enumerate}
\end{lemma}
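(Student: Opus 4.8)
The plan is to carry out the classical ``local slice'' construction for locally nilpotent derivations (Makar-Limanov, van den Essen) inside the Poisson category, following \cite[Lemma 3.4 and Lemma 3.5]{BZ2}. First I would set up the ambient objects. Since $A$ is a Poisson domain, its bracket extends uniquely to a Poisson bracket on $Q(A)$ (each $\{a,-\}$ extends by the quotient rule, and antisymmetry, the Jacobi identity and the biderivation property all persist on the localization), so $A\hookrightarrow Q(A)$ is an embedding of Poisson algebras; likewise $\alpha$ extends uniquely to a Poisson derivation of $Q(A)$, still written $\alpha$. This extension need not be locally nilpotent on $Q(A)$, but we only ever use local nilpotence on $A$. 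The set $E=\ker(\alpha)\subseteq Q(A)$ is then a Poisson subfield: it is a subfield because the kernel of a derivation of a field is a field, and it is closed under $\{,\}$ because $\alpha(\{a,b\})=\{\alpha(a),b\}+\{a,\alpha(b)\}$.

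Next I would build the slice. Because $\alpha|_A\neq 0$ is locally nilpotent, choose $a\in A$ with $\alpha(a)\neq 0$ and let $n\ge 2$ be minimal with $\alpha^n(a)=0$; then $c:=\alpha^{n-2}(a)$ satisfies $s:=\alpha(c)\neq 0$ and $\alpha(s)=\alpha^2(c)=0$, so $s\in E\cap A$. Put $x:=c/s\in Q(A)$; the quotient rule gives $\alpha(x)=1$. Since $\alpha$ kills $E$ and $\alpha(x)=1$, the operator $\alpha$ acts on $E[x]$ as $\partial/\partial x$, and as $\ch\kk=0$, applying $\alpha$ to a putative minimal-degree algebraic relation of $x$ over $E$ produces a shorter nonzero one; hence $x$ is transcendental over $E$ and $E[x]$ is a polynomial ring. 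I would then define $\delta_0\colon E\to E$ by $\delta_0(e)=\{x,e\}$; this lands in $E$ since $\alpha(\{x,e\})=\{1,e\}+\{x,0\}=0$, it is a derivation of $E$, and the Jacobi identity gives $\delta_0(\{e,e'\})=\{\delta_0(e),e'\}+\{e,\delta_0(e')\}$, so $\delta_0\in\pder(E)$ and the Poisson-Ore extension $E[x;0,\delta_0]_P$ is defined. To identify this extension with the Poisson subalgebra $E[x]$ of $Q(A)$, I would first check $\{E[x],E[x]\}\subseteq E[x]$ (by the binomial Leibniz rule for $\alpha$ applied to a bracket, a bracket of elements annihilated by powers of $\alpha$ is again annihilated by a power of $\alpha$, and $E[x]$ is exactly the set of such elements), and then note that the bracket of $Q(A)$ restricted to $E[x]$ and the Poisson-Ore bracket are biderivations of $E[x]$ agreeing on the generating set $E\cup\{x\}$, hence they coincide.

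Then I would prove $A\subseteq E[x]$ by the standard integration argument: by induction on $m$, every $u\in Q(A)$ with $\alpha^{m+1}(u)=0$ lies in $E[x]$. The case $m=0$ is the definition of $E$; for $m\ge 1$ one has $\alpha^{m}(\alpha(u))=0$, so by induction $\alpha(u)=\sum_{i=0}^{m-1}e_ix^i\in E[x]$, and then $v:=\sum_{i=0}^{m-1}\frac{e_i}{i+1}x^{i+1}$ (using $\ch\kk=0$) satisfies $\alpha(v)=\alpha(u)$, whence $u-v\in E$ and $u\in E[x]$. Since $\alpha|_A$ is locally nilpotent, every element of $A$ is killed by a power of $\alpha$, so $A\subseteq E[x]$; combined with the identification $E[x]\cong E[x;0,\delta_0]_P$ and the inclusion $E[x]\subseteq Q(A)$, this is (1). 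For (2), applying $Q(-)$ to $A\subseteq E[x]\subseteq Q(A)$ gives $Q(A)\subseteq Q(E[x;0,\delta_0]_P)\subseteq Q(A)$, hence equality, and the Poisson structures agree by the uniqueness of the bracket extension to the common fraction field. For (3), the extended $\alpha$ restricts to $E[x]$, acting as $\partial/\partial x$; this restriction is a Poisson derivation (a Poisson derivation of $Q(A)$ restricted to a Poisson subalgebra), it is locally nilpotent since $\alpha^{m+1}$ annihilates everything of $x$-degree $\le m$, and by construction $\alpha(E)=0$ and $\alpha(x)=1$.

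The main obstacle is not a single deep step but the Poisson bookkeeping: one must justify that extending both the bracket and $\alpha$ to $Q(A)$ is legitimate, and, most importantly, that the concrete Poisson subalgebra $E[x]\subseteq Q(A)$ really is a Poisson-Ore extension over $E$ — that is, that $\{x,e\}\in E$ for $e\in E$, that $\delta_0$ is a genuine Poisson derivation of $E$, and that the two biderivations on $E[x]$ coincide. The underlying commutative slice argument is classical; the only places where $\ch\kk=0$ is essential are the construction of $x=c/\alpha(c)$ and the division by integers in the inductive integration step.
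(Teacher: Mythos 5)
Your proposal is correct and follows essentially the same route as the paper: extend $\alpha$ to $Q(A)$, take $E=\ker\alpha$, produce a slice $x$ with $\alpha(x)=1$, define $\delta_0=\{x,-\}$ via the Jacobi identity, and show $A\subseteq E[x]$ by descending induction on the nilpotency index (your antiderivative step is just a rephrasing of the paper's subtraction of $rx^{n-1}/(n-1)!$), with freeness of $E[x]$ over $E$ obtained by applying $\alpha$ to a minimal relation. The only differences are cosmetic: you construct the slice explicitly as $x=c/\alpha(c)$ and spell out the identification of the two brackets on $E[x]$, where the paper cites \cite[Lemma 3.2]{OH1} and leaves these verifications implicit.
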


\begin{proof}
(1) By \cite[Lemma 3.2]{OH1}, $\alpha$ extends uniquely to a Poisson derivation of $Q(A)$.
Let $E$ denote the kernel of this extension.
Because $\alpha$ is a Poisson derivation we have
\[ \alpha( \{ u, v \}) = \{ \alpha(u), v \} + \{ u, \alpha(v) \} = 0 \quad\text{for all $u,v \in E$}.\]
Hence, $E$ is a Poisson subalgebra of $Q(A)$.

By hypothesis $\alpha$ is nonzero and locally nilpotent and so there exists $x \in Q(A)\backslash E$ such that $\alpha(x) \in E$.
Moreover, we may choose $x$ such that $\alpha(x)=1$.
Thus, for all $a \in E$,
\[
\alpha( \{x,a\} )
	= \{ \alpha(x), a\}
	= \{ 1,a \}
	= 0.
\]
In particular, $\{x,-\}$ induces a derivation $\delta_0$ of $E$. By the Jacobi identity,
\begin{align*}
\delta_0( \{u,v\} )
	&= \{ x, \{u,v\} \}
	= -\{ u, \{v,x\} \} - \{ v, \{x,u\} \} \\
	&=  \{ u, \{x,v\} \} + \{ \{x,u\}, v \}
	= \{ u, \delta_0(v)\} + \{ \delta_0(u), v\}.
\end{align*}
Hence, $\{x,-\}$ induces a Poisson derivation $\delta_0$ of $E$.

Let $W = \{ a \in Q(A) : \alpha^n(a)=0 \text{ for some } n \geq 0 \}$.
Denote by $W'$ the subalgebra of $Q(A)$ generated by $E$ and $x$.
Since $\{ x, E \} \subseteq E$, then $W' = \sum_{i \geq 0} Ex^i$ is a Poisson subalgebra of $W$.
To prove $W = W'$, it suffices to show that $W \subseteq W'$.
Let $a \in W$ and let $n$ be the smallest integer such that $\alpha^n(a)=0$.
When $n=0$, $a \in E$ and the claim holds. We proceed inductively.
Assume the result holds for all $j < n$. Take $\alpha^n(a)=0$ and write $\alpha^{n-1}(a)=r \in E$ with $r\neq 0$ and $\alpha(r)=0$. But $\alpha^{n-1}(rx^{n-1}/(n-1)!) = r$ so 
$\alpha^{n-1}(a-rx^{n-1}/(n-1)!) = 0$. Now $rx^{n-1}/(n-1)! \in Ex^{n-1}$ and so $W=W'$.

As $\alpha \in \plnd(A)$, then $A \subset W$.
Thus, $A$ embeds in the Poisson subalgebra $W$.
Since $ \{x,a\}=\delta_0(a)$ for all $a \in E$, then
$W$ is isomorphic to a homomorphic image of the Poisson-Ore extension $E[x;0,\delta_0]_P$.
Let $I$ denote the kernel of this image and choose a monic
element of minimal $d$-degree, say $b = x^d + b_{d-1} x^{d-1} + \cdots + b_1 x + b_0 \in I$.
Applying $\alpha$ gives 
\[ x^{d-1} + \sum_{j=1}^{d-1} j d\inv b_j x^{j-1} \in I.\]
This contradicts the minimality of $d$ unless $d=0$.
Thus, $A$ embeds in $W \iso E[x;0,\delta_0]_P$. Finally, the remaining of the statements are easy to check by our construction. 
\end{proof}

\begin{lemma}
\label{lem.indPder}
Let $A$ be a Poisson algebra and let $\alpha$ be a Poisson derivation of the Poisson algebra $A[x]$.
Suppose $\alpha(A) \subset A + Ax + \cdots +A x^m$ for some fixed $m \geq 0$.
For $a \in A$, write $\alpha(a) = \sum_{i=0}^m c_{a,i} x^i$ for $c_{a,i}\in A$.
Then the map $\beta:A \rightarrow A$ defined by $\beta(a) = c_{a,m}$ defines a Poisson derivation of $A$. 
\end{lemma}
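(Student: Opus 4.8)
The plan is to verify the two defining properties of a Poisson derivation for $\beta$ directly, extracting the top-degree coefficient in $x$ from the corresponding identities for $\alpha$ on $A[x]$. First, $\beta$ is $\kk$-linear since $a \mapsto c_{a,m}$ is the composition of $\alpha$ with the linear map picking out the degree-$m$ coefficient. For the Leibniz rule, apply $\alpha$ to a product $ab$ with $a,b \in A$: since $\alpha$ is a derivation of $A[x]$, $\alpha(ab) = \alpha(a)b + a\alpha(b) = \sum_{i=0}^m (c_{a,i}b + a c_{b,i})x^i$, so comparing the coefficient of $x^m$ gives $c_{ab,m} = c_{a,m}b + a c_{b,m}$, i.e.\ $\beta(ab) = \beta(a)b + a\beta(b)$.

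The main work is the Poisson-derivation identity $\beta(\{a,b\}) = \{\beta(a),b\} + \{a,\beta(b)\}$ for $a,b \in A$. Here I would apply $\alpha$ to $\{a,b\}_{A[x]} = \{a,b\}_A$ (recall $x$ is Poisson-central in $A[x]$, as this is the polynomial extension with trivial bracket against $x$). The Poisson-derivation property of $\alpha$ on $A[x]$ gives
\[
\alpha(\{a,b\}) = \{\alpha(a), b\}_{A[x]} + \{a, \alpha(b)\}_{A[x]}
= \sum_{i=0}^m \Bigl( \{c_{a,i}, b\}_A + \{a, c_{b,i}\}_A \Bigr) x^i,
\]
where I have used that $\{c_{a,i}x^i, b\}_{A[x]} = \{c_{a,i},b\}_A x^i$ because $\{x,b\}=0$ and $\{x,-\}$ is a derivation (or simply that the bracket on $A[x]$ is $A$-bilinear in the $x$-degree when one argument lies in $A$). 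On the other hand the left side is $\alpha(\{a,b\}_A) = \sum_{i=0}^m c_{\{a,b\},i} x^i$. Comparing coefficients of $x^m$ yields $c_{\{a,b\},m} = \{c_{a,m},b\}_A + \{a,c_{b,m}\}_A$, which is exactly $\beta(\{a,b\}) = \{\beta(a),b\} + \{a,\beta(b)\}$.

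The only genuine subtlety — and the step I expect to require the most care — is justifying the coefficient-wise manipulation of the bracket on $A[x]$: one must be sure that $\{a, c\,x^i\}_{A[x]} = \{a,c\}_A\, x^i$ for $a,c \in A$, which follows since $\{a,-\}$ is a derivation of $A[x]$ and $\{a,x\}=0$, hence $\{a,cx^i\} = \{a,c\}x^i + c\{a,x^i\} = \{a,c\}x^i$. With that in hand, the hypothesis $\alpha(A) \subseteq A + Ax + \cdots + Ax^m$ guarantees all the expansions above are valid and the degree-$m$ comparison is legitimate, completing the proof.
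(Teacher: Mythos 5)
Your proposal is correct and follows essentially the same route as the paper: apply $\alpha$ to $\{a,b\}$, use the Poisson-derivation identity for $\alpha$ on $A[x]$ together with $\{x,A\}=0$ to expand coefficient-wise, and compare the coefficients of $x^m$. The paper likewise leaves the $\kk$-linearity and Leibniz checks as routine and devotes the proof to exactly this degree-$m$ comparison.
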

\begin{proof}
It is clear that $\beta$ is $\kk$-linear because $\alpha$ is and it is easy to check that $\beta$ is a derivation of $A$. We show that $\beta$ is a Poisson derivation. Let $a,b \in A$. We use the notation $\alpha(a)_m$ to denote the coefficient of $x^m$ in $\alpha(a)$. Then
\begin{align*}
\beta(\{a,b\})
	&= \alpha(\{a,b\})_m 
	= \left( \{ \alpha(a),b \} + \{a, \alpha(b)\} \right)_m \\
	&= \left( \left\lbrace \sum_{i=0}^m c_{a,i} x^i, b \right\rbrace
		+ \left\lbrace a, \sum_{i=0}^m c_{b,i} x^i \right\rbrace \right)_m
	= \left( \sum_{i=0}^m \left( \left\lbrace c_{a,i}, b \right\rbrace
		+ \left\lbrace a, c_{b,i} \right\rbrace \right) x^i \right)_m \\
	&= \left\lbrace c_{a,m}, b \right\rbrace + \left\lbrace a, c_{b,m} \right\rbrace
	= \{ \beta(a),b \} + \{a, \beta(b)\}. 
\end{align*}
\end{proof}

\begin{lemma}
\label{lem.plnd}
Let $\text{char}(\kk)=0$ and let $A$ be a Poisson algebra with some $0\neq d\in \pcnt(A)$.
\begin{enumerate}
    \item Suppose $A$ is an affine domain. If $A$ is $\plndd$-rigid, then $\pmld(A[x])=A$.
    \item Suppose $\pcnt(A)$ is an affine domain. If $A$ is $\plnddp$-rigid, then $\pcnt(A)\subseteq \pmld(A[x])$.
\end{enumerate}
\end{lemma}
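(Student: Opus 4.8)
The plan is to prove (1) by establishing the two containments $\pmld(A[x])\subseteq A$ and $A\subseteq\pmld(A[x])$, and then to deduce (2) by running the same argument inside the Poisson centre. The first containment in (1) is immediate from Lemma~\ref{lem.ddx} (with $n=1$, $*$ blank). For the second, fix $\alpha\in\plndd(A[x])$; I must show $A\subseteq\ker\alpha$. Equip $A[x]$ with the Poisson $\NN$-filtration $F_nA[x]=\bigoplus_{0\le j\le n}Ax^j$, so that $\gr_F A[x]\cong A[x]$ as graded Poisson algebras (grading by $x$-degree). If $\alpha(A)\subseteq A$, then $\alpha|_A$ is a locally nilpotent Poisson derivation of $A$ with $d\in\ker(\alpha|_A)$, so $\alpha|_A=0$ by $\plndd$-rigidity and we are done. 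So assume $\alpha(A)\not\subseteq A$; then, since $A$ is affine, $\alpha$ is filtered of some finite order $e$ (the least integer with $\alpha(F_nA[x])\subseteq F_{n+e}A[x]$ for all $n$), and necessarily $e\ge 1$ because $\alpha(F_0A[x])\not\subseteq F_0A[x]$. I will derive a contradiction from $e\ge 1$.

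Consider the leading term $\bar\alpha:=\gr_F\alpha$, a nonzero locally nilpotent Poisson derivation of $\gr_F A[x]\cong A[x]$, homogeneous of degree $e$. First, $\bar\alpha(x)=0$: otherwise $\bar\alpha(x)$ is a nonzero homogeneous element of $x$-degree $1+e\ge 2$, hence $x\mid\bar\alpha(x)$; if $N$ is maximal with $\bar\alpha^N(x)\ne 0$ (finite, by local nilpotence), an easy induction gives $x\mid\bar\alpha^N(x)$, while $\bar\alpha^{N+1}(x)=0$ puts the nonzero element $\bar\alpha^N(x)$ into $\ker\bar\alpha$; since kernels of locally nilpotent derivations of a domain are factorially closed (here $\ch\kk=0$ is used), $x\in\ker\bar\alpha$ — contradicting $\bar\alpha(x)\ne 0$. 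Thus $\bar\alpha$ is $\kk[x]$-linear and $\bar\alpha(a)=\beta(a)x^e$ for a map $\beta\colon A\to A$. By Lemma~\ref{lem.indPder} (applied to $\bar\alpha$ with $m=e$), $\beta$ is a Poisson derivation of $A$; the identity $\bar\alpha^k(a)=\beta^k(a)x^{ke}$ shows $\beta$ is locally nilpotent, and $\beta(d)x^e=\bar\alpha(d)=0$ gives $\beta(d)=0$. Hence $\beta\in\plndd(A)$, so $\beta=0$ by $\plndd$-rigidity, forcing $\bar\alpha|_A=0$ and then $\bar\alpha=0$ — contradicting $\bar\alpha\ne 0$. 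This rules out $e\ge 1$, and (1) follows.

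For (2), since Poisson derivations preserve the Poisson centre, each $\alpha\in\plndd(A[x])$ restricts to a locally nilpotent derivation $\alpha_0$ of $\pcnt(A[x])=\pcnt(A)[x]$ — a polynomial ring over the affine domain $\pcnt(A)$ with trivial bracket — and $d\in\ker\alpha_0$. I would rerun the filtration argument of (1) for $\alpha_0$ inside $\pcnt(A)[x]$: the alternative $\bar\alpha_0(x)\ne 0$ is excluded word for word as above (factorial closedness again), leaving the case $\alpha_0(\pcnt(A))\subseteq\pcnt(A)$ together with a locally nilpotent leading‑term Poisson derivation of $\pcnt(A)$ that must be killed. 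Here $\plnddp$-rigidity of $A$ enters: one shows the relevant locally nilpotent Poisson derivation of $\pcnt(A)$ is the restriction of a locally nilpotent Poisson derivation of $A$ annihilating $d$, so it kills $\pcnt(A)$ by hypothesis; in particular $\alpha_0|_{\pcnt(A)}=0$, i.e. $\pcnt(A)\subseteq\ker\alpha$ for every $\alpha\in\plndd(A[x])$, giving $\pcnt(A)\subseteq\pmld(A[x])$.

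I expect the main obstacle to be the claim $\bar\alpha(x)=0$ — that a locally nilpotent Poisson derivation of $A[x]$ cannot strictly raise $x$-degree — where the polynomial (not merely filtered) structure of $A[x]$ and the absence of zero divisors are both essential, via factorial closedness of kernels of locally nilpotent derivations. In part~(2) the further delicate point is that $A$ itself is not assumed affine, so all of the leading‑term and Lemma~\ref{lem.indPder} bookkeeping must be carried out inside $\pcnt(A)[x]$ and then transported back to $A$ in order to invoke the $\plnddp$-rigidity hypothesis, which constrains Poisson derivations of $A$ rather than of $\pcnt(A)$.
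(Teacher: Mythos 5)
Your proof of part~(1) is correct but takes a genuinely different route from the paper's. The paper fixes the minimal $m$ with $\alpha(A)\subseteq A+Ax+\cdots+Ax^m$ and splits into three cases according to $\deg_x\alpha(x)$: the hardest case ($\deg_x\alpha(x)=m+1$) is killed by forming the leading-term derivation $\alpha'$, invoking Lemma~\ref{lem.embed} to embed $A[x]$ into a Poisson--Ore extension $E[y;0,\delta_0]_P$, and counting $y$-degrees, while the case $\deg_x\alpha(x)>m+1$ is excluded by iterating $\alpha$ on $x$. You instead pass to the associated graded derivation $\bar\alpha$ for the $x$-filtration and eliminate $\bar\alpha(x)\neq 0$ in one stroke via factorial closedness of kernels of locally nilpotent derivations of a characteristic-zero domain; this subsumes the paper's Cases 2 and 3 and avoids Lemma~\ref{lem.embed} entirely, at the cost of importing that standard fact, which the paper does not prove. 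The surviving case $\bar\alpha(x)=0$ is the paper's Case 1, and both arguments rest on Lemma~\ref{lem.ddx} for the containment $\pmld(A[x])\subseteq A$ and on Lemma~\ref{lem.indPder} to see that the leading coefficient $\beta$ is a Poisson derivation. For part~(1) your argument is complete and arguably cleaner.

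Part~(2) is where you have a genuine gap. The pivotal sentence --- ``one shows the relevant locally nilpotent Poisson derivation of $\pcnt(A)$ is the restriction of a locally nilpotent Poisson derivation of $A$ annihilating $d$'' --- is precisely the step that needs proof, and it is not clear how to carry it out. The hypothesis that $A$ is $\plnddp$-rigid constrains elements of $\plndd(A)$, that is, locally nilpotent Poisson derivations of $A$ itself; but the derivation your argument produces ($\alpha|_{\pcnt(A)}$ in the degree-zero case, or the leading coefficient on $\pcnt(A)$ when $e\ge 1$) is defined only on $\pcnt(A)$, arises by restricting a derivation of $A[x]$ that need not preserve $A$, and has no evident extension to a Poisson derivation of all of $A$; the natural candidate obtained by composing $\alpha|_A$ with the projection $x\mapsto 0$ is a Poisson derivation of $A$ killing $d$, but its local nilpotence is unclear. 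In fairness, the paper's own proof of~(2) is the single remark that Poisson derivations preserve the Poisson center and faces the same difficulty; the argument does close up verbatim under the stronger hypothesis that $\pcnt(A)$ is $\lndd$-rigid (the hypothesis used in Lemma~\ref{lemm.dretract}(3)--(4)), since then part~(1) applied to the trivial-bracket algebra $\pcnt(A)$ gives $\mld(\pcnt(A)[x])=\pcnt(A)$ and one simply restricts $\alpha$ to $\pcnt(A)[x]$. If you want statement~(2) exactly as written, the transfer from locally nilpotent derivations of $\pcnt(A)$ back to elements of $\plndd(A)$ is the point you must supply.
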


\begin{proof}
(1) By Lemma \ref{lem.ddx}, it suffices to show $A \subseteq \pmld(A[x])$. Suppose by way of contradiction that there exists a locally nilpotent Poisson derivation $\alpha$ of $A[x]$ such that $\alpha(A)\neq 0$ and $\alpha(d)=0$.
Let $a_1,\hdots,a_s$ be a generating set of $A$ as a $\kk$-algebra.
Then $\alpha(A) \subset A\alpha(a_1) + \cdots + A\alpha(a_s)$ and so there exists $m \geq 0$ minimial such that $\alpha(A) \subset A + Ax + \cdots +A x^m$.
When $m=0$ we have $\alpha(A) \subset A$ and so $\alpha(A)=0$ because
$A$ is $\plndd$-rigid, a contradiction. Thus, $m \geq 1$. Write, by Lemma \ref{lem.indPder},
\[ \alpha(a) = \beta(a)x^m + \text{ lower degree terms }\]
for some $\beta \in \pder(A)$ and $\beta(d)=0$. 
We consider three cases depending on the image of $x$ under $\alpha$.
It will follow from these cases that $\alpha(A)=0$.

{\bf Case 1} $\alpha(x) \in A + Ax + \cdots +A x^m$.

In this case we have $\alpha(x^i)\subseteq\sum_{n=0}^{i+m-1}Ax^n$ and $\alpha(Ax^i)\subseteq \sum_{n=0}^{i+m}Ax^n$ for all $i$. Thus for every $a\in A$ we have
$$\alpha^j(a)=\beta^j(a)x^{mj}+ \text{ lower degree terms}.$$
We get $\beta$ is locally nilpotent as $\alpha$ is. Hence $\beta\in \plndd(A)$ by Lemma \ref{lem.indPder}, which implies that $\beta(A)=0$ for $A$ is $\plndd$-rigid. This contradicts the minimality of $m$ so $\alpha(A)=0$.

{\bf Case 2} $\alpha(x) = bx^{m+1} + \text{ lower degree terms}$ for some $b \neq 0$ in $A$.

Since $\{x,a\}=0$, then we have
\begin{align*}
0	&= \alpha( \{x,a\})
	= \{ \alpha(x), a \} + \{ x,\alpha(a) \} \\
	&= \{ bx^{m+1} + \text{lower degree terms}, a \} + \{ x, \beta(a)x^m + \text{lower degree terms}\} \\
	&= \{ b, a \} x^{m+1}+\text{lower degree terms}.
\end{align*}
Thus, $\{b,a\}=0$ for all $a \in A$ so $b \in \pcnt(A)$.
Define $\alpha':A[x] \rightarrow A[x]$ by $\alpha'(a) = \beta(a)x^m$ for $a \in A$ and $\alpha'(x)=bx^{m+1}$.
By \cite[Lemma 4.11]{CPWZ2}, $\alpha' \in \der(A[x])$. Moreover, we can check that $\alpha'$ is a Poisson derivation.

Let $f = \sum_{i=0}^n f_i x^i$ and $g = \sum_{j=0}^k g_j x^j$ be in $A[x]$. Then
\begin{align*}
\alpha'( \{ f, g\} )	
	&= \alpha'\left( \sum_{i=0}^n \sum_{j=0}^k \{ f_i, g_j \} x^{i+j} \right)
	= \sum_{i=0}^n \sum_{j=0}^k \left( \alpha'(\{ f_i, g_j \}) x^{i+j} 
		+ \{ f_i, g_j \} \alpha'(x^{i+j}) \right) \\
	&= \sum_{i=0}^n \sum_{j=0}^k \left( \beta(\{ f_i, g_j \}) x^{m+i+j} 
		+ \{ f_i, g_j \} ((i+j)bx^{m+i+j}) \right) \\
	&= \sum_{i=0}^n \sum_{j=0}^k \left(\{ \beta(f_i), g_j \} + \{ f_i, \beta(g_j) \}
		+ (i+j)b\{ f_i, g_j \} \right) x^{m+i+j} \\
	&= \sum_{i=0}^n \sum_{j=0}^k \left( \{ \beta(f_i) + f_i(ib), g_j \}
		+ \{f_i, \beta(g_j)+g_j(jb)\} \right) x^{m+i+j} \\
	&= \sum_{i=0}^n \sum_{j=0}^k \left( \{ \beta(f_i)x^{m+i} + f_i(ibx^{m+i}), g_j \} x^j 
		+ \{f_i, \beta(g_j)x^{m+j}+g_j(jbx^{m+j})\} x^i \right) \\
	&= \sum_{i=0}^n \sum_{j=0}^k \left( \{ \alpha'(f_i)x^i + f_i\alpha'(x^i), g_j \} x^j 
		+ \{f_i, \alpha'(g_j)x^j+g_j\alpha'(x^j)\} x^i \right) \\
	&= \{ \alpha'(f), g \} + \{f, \alpha'(g)\}.
\end{align*}
Thus $\alpha' \in \pder(A[x])$. We can view $\alpha'$ as an associated graded Poisson derivation of $\alpha$. Since $\alpha$ is locally nilpotent, so is $\alpha'$ and $\alpha'\in \plndd(A[x])$. We apply Lemma \ref{lem.embed} to the Poisson algebra $A[x]$ with respective to $\alpha'$. Denote $E=\{a\in Q(A[x]): \alpha'(a)=0\}$. There is an embedding $\phi: A[x]\hookrightarrow E[y;0,\delta_0]_P$ for some $\delta_0\in \pder(E)$. Moreover, $\alpha'$ extends to a locally nilpotent Poisson derivation of $E[y;0,\delta_0]_P$ by setting $\alpha'(E)=0$ and $\alpha'(y)=1$. Under this embedding, we have $\phi(bx^{m+1})=\phi(\alpha'(x))=\alpha'(\phi(x))$. It follows that 
\[ \deg_y \phi(b)+(m+1)\deg_y \phi(x)=\deg_y \phi(x)-1\ \text{in $E[y]$}.\]
Since $\alpha'(x)\neq 0$, $\deg_y\phi(x)\ge 1$, a contradiction to the above equality.

{\bf Case 3} $\alpha(x) = bx^i + \text{ lower degree terms}$ 
for some $b \neq 0$ in $A$ and some $i > m+1$. It is easy to see 
\[
\alpha^n(x)=\left\{\prod_{s=1}^{n-1}((i-1)s+1)\right\}b^nx^{(i-1)n+1}+\text{ lower degree terms},
\]
for all $n\ge 2$. So, $\alpha$ can not be locally nilpotent, which is a contradiction.

Combining all above cases, we see that $\alpha(A)=0$. 

(2) can be proved similarly noting that any Poisson derivation of $A$ preserves its Poisson center $\pcnt(A)$. 
\end{proof}

\begin{theorem}\label{thm.strongplndcan}
Let $\text{char}(\kk)=0$ and let $A$ be a Poisson algebra.
\begin{enumerate}
      \item Suppose $A$ is an affine domain. If $A$ is (strongly) $\plnd$-rigid, then $A$ is (strongly) Poisson cancellative. 
      \item Suppose $A$ is noetherian and $\pcnt(A)$ is an affine domain. If $A$ is (strongly) $\plndp$-rigid, then $A$ is (strongly) Poisson cancellative.
      \end{enumerate}
\end{theorem}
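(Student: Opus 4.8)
The plan is to deduce Theorem~\ref{thm.strongplndcan} directly by feeding Lemma~\ref{lem.plnd} into Theorem~\ref{thm.plndcan}. All of the analytic content — extending a locally nilpotent Poisson derivation along a polynomial variable, the case analysis on $\deg_x\alpha(x)$, and the embedding of Lemma~\ref{lem.embed} into a Poisson--Ore extension — has already been absorbed into Lemmas~\ref{lem.embed}--\ref{lem.plnd}, so at this stage the argument is essentially matching hypotheses. Throughout I would specialize to $d=1$, which is legitimate since $1\in\ker\partial$ for every $\partial\in\plnd(A)$ and the subscript is then suppressed as in the Remark, and take $*$ blank, which is permitted because $\ch\kk=0$.

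For part (1) in the non-strong case: assuming $A$ is an affine Poisson domain that is $\plnd$-rigid, Lemma~\ref{lem.plnd}(1) (with $d=1$) upgrades this to $\pml(A[x])=A$, which is exactly the hypothesis of Theorem~\ref{thm.plndcan}(1); hence $A$ is Poisson cancellative. For the strong case I would simply observe that ``strongly $\plnd$-rigid'' means, by definition, $A\subseteq\pml(A[t_1,\dots,t_n])$ for every $n\ge 1$, which is precisely the hypothesis of Theorem~\ref{thm.plndcan}(2) (again with $*$ blank and $d=1$), so $A$ is strongly Poisson cancellative. It is worth noting that one cannot iterate Lemma~\ref{lem.plnd} here — $A[x]$ is never $\plnd$-rigid because of $\partial/\partial x$ — which is exactly why the stronger rigidity must be assumed outright rather than bootstrapped from $\plnd$-rigidity of $A$.

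Part (2) runs in parallel using the Poisson-center versions. If $A$ is noetherian with $\pcnt(A)$ an affine domain and $A$ is $\plndp$-rigid, then Lemma~\ref{lem.plnd}(2) gives $\pcnt(A)\subseteq\pml(A[x])$, and Theorem~\ref{thm.plndcan}(3), whose hypotheses ($A$ noetherian, $\pcnt(A)$ an affine domain) are exactly what we have, concludes that $A$ is Poisson cancellative. If $A$ is strongly $\plndp$-rigid, i.e.\ $\pcnt(A)\subseteq\pml(A[t_1,\dots,t_n])$ for all $n\ge 1$, then Theorem~\ref{thm.plndcan}(4) gives strong Poisson cancellation.

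The only points needing care are bookkeeping: confirming that the standing finiteness hypotheses (affine domain in (1); noetherian with affine Poisson center in (2)) are precisely those required by the invoked results, and keeping the $d=1$ and $*$-blank specializations straight so that ``$\plnd$-rigid'' here really is the $\plndds$- (resp.\ $\plnddps$-) rigidity of those statements. I expect no genuine obstacle at this stage; the real difficulty was already confronted in the proof of Lemma~\ref{lem.plnd}, where, via the locally nilpotent behavior of $\alpha$ together with the Poisson--Ore embedding of Lemma~\ref{lem.embed}, one must rule out that a locally nilpotent Poisson derivation of $A[x]$ is nonzero on $A$ while annihilating $d$.
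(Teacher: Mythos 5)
Your proposal is correct and is exactly the paper's argument: the published proof of Theorem~\ref{thm.strongplndcan} consists of the single line ``It follows from Theorem~\ref{thm.plndcan} and Lemma~\ref{lem.plnd},'' and your routing of Lemma~\ref{lem.plnd}(1)--(2) into the four parts of Theorem~\ref{thm.plndcan}, with the $d=1$ and $*$-blank specializations, matches how those results are set up. Your side remarks (why the strong case must be assumed rather than bootstrapped, and where the real work was done) are accurate but not needed for the deduction itself.
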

\begin{proof}
It follows from Theorem \ref{thm.plndcan} and Lemma \ref{lem.plnd}.
\end{proof}

\section{Poisson discriminant and effectiveness}
\label{sec.pdisc}
For noncommutative algebras that are module-finite over their center, the notion of the discriminant has been introduced to compute their automorphism groups \cite{BZ2,CPWZ1,CPWZ2}. In this section we study the discriminant for Poisson algebras and its relation with the \textbf{PZCP} and Poisson automorphism groups. It is a matter of fact that a Poisson algebra appearing in characteristic zero usually does not possess a large Poisson center. Therefore, we introduce the Poisson discriminant from a representation-theoretic point of view following Lu, Wu, and Zhang
\cite[\S 2]{LWZ2}. 

Let $A$ be a Poisson algebra. We denote by $A^\times$ the set of units of $A$. By a {\sf property $\cP$} we mean a property that is invariant under isomorphism within a class of algebras. Herein we generally assume this is the class of Poisson algebras.
\begin{definition}
\label{def.P}
Let $A$ be a Poisson algebra and let $\pcnt = \pcnt(A)$. Let $\cP$ be a property defined for Poisson algebras. We define the following terms for sets/ideals in $A$.
\begin{enumerate}
\item ($\cP$-locus) 
$L_{\cP}(A) := \{ \fm \in \Maxspec(\pcnt) : A/\fm A \text{ has property $\cP$}\}$.
\item ($\cP$-discriminant set) 
$D_{\cP}(A) := \Maxspec(\pcnt) \backslash L_{\cP}(A)$.
\item ($\cP$-discriminant ideal)
$I_{\cP}(A) := 
\bigcap_{\fm \in D_{\cP}(A)} \fm \subset \pcnt$.
\end{enumerate}
In the case that $I_{\cP}(A)$ is a principal ideal, generated by $d \in \pcnt$, then $d$ is called the $\cP$-discriminant of $A$, denoted by $d_{\cP}(A)$. Observe that, if $\pcnt$ is a domain, $d_{\cP}(A)$ is unique up to an element of $\pcnt^\times$.
\end{definition}

\begin{example}
\label{ex.weyl}
Let $A$ be the first Poisson Weyl algebra. It is well-known that $A$ is Poisson simple. Now let $H$ be the Poisson homogenization of $A$ such that $H=\kk[x,y,t]$ with Poisson bracket $\{x,t\}=\{y,t\}=0$ and $\{x,y\}=t^2$. Suppose $\kk$ is algebraically closed of characteristic zero. The Poisson center of $H$ is $\pcnt(H)=\kk[t]$. For each $\alpha \in \kk$, let $m_{\alpha}=(t-\alpha)$ denote the corresponding maximal ideal of $\pcnt(H)$. Then $H/m_{\alpha}H \iso A$ for all $\alpha \in \kk^\times$. On the other hand, $H/m_0 H \iso \kk[x,y]$ with the trivial Poisson bracket, whence not Poisson simple. It follows that if $\cS$ is the property of being Poisson simple, then the $\cS$-locus is $L_\cS(H)=\{m_\alpha : \alpha \neq 0\}$, the $\cS$-discriminant set is $D_{\cS}(H)=\{m_0\}$, and the $\cS$-discriminant exists, that is $d_{\cS}(H)=t$.
\end{example}

\begin{definition}
Let $\cC$ be a class of Poisson $\kk$-algebras.
We say that a property $\cP$ is {\sf $\cC$-stable} if for every Poisson algebra $A$ in $\cC$ and every $n \geq 1$, $I_{\cP}(A \tensor \kk[t_1,\hdots,t_n]) =  I_{\cP}(A) \tensor \kk[t_1,\hdots,t_n]$ as ideals of $\pcnt(A) \tensor \kk[t_1,\hdots,t_n]$.
If $\cC$ is a singleton $\{A\}$, we say $\cP$ is {\sf $A$-stable}.
On the other hand, if $\cC$ is the collection of all Poisson $\kk$-algebras with affine Poisson center over $\kk$, we say $\cP$ is {\sf stable}.
\end{definition}

The following lemma is obvious from Definition \ref{def.P}.

\begin{lemma}
\label{lem.prop}
Let $\cP$ be a property.
If $\phi:A \rightarrow B$ is an isomorphism of Poisson algebras,
then $\phi$ preserves the $\cP$-locus, $\cP$-discriminant set,
$\cP$-discriminant ideal, and, if it exists,
the $\cP$-discriminant.
\end{lemma}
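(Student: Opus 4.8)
The plan is to verify the claim for each of the four invariants in turn, using the defining formulas from Definition~\ref{def.P} and the hypothesis that $\phi\colon A\to B$ is a Poisson isomorphism. The first thing I would record is that $\phi$ restricts to an isomorphism of Poisson centers $\phi|_{\pcnt}\colon \pcnt(A)\xrightarrow{\sim}\pcnt(B)$, hence induces a homeomorphism $\phi^*\colon \Maxspec(\pcnt(B))\xrightarrow{\sim}\Maxspec(\pcnt(A))$ sending a maximal ideal $\fm$ of $\pcnt(B)$ to $\phi^{-1}(\fm)\cap\pcnt(A)$ (equivalently $(\phi|_{\pcnt})^{-1}(\fm)$). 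This is the one structural fact on which everything else rests.

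Next I would check that $\phi$ carries the ideal $\fm A$ to $\phi(\fm)B$ for $\fm\in\Maxspec(\pcnt(A))$: indeed $\phi(\fm A)=\phi(\fm)\phi(A)=\phi(\fm)B$ since $\phi$ is surjective, and $\phi(\fm)$ is a maximal ideal of $\pcnt(B)$. Consequently $\phi$ descends to a Poisson algebra isomorphism $A/\fm A\xrightarrow{\sim} B/\phi(\fm)B$. Since $\cP$ is a property invariant under isomorphism within the class of Poisson algebras, $A/\fm A$ has property $\cP$ if and only if $B/\phi(\fm)B$ does. Reading off Definition~\ref{def.P}(1), this says exactly that $\phi^*$ identifies $L_{\cP}(B)$ with $L_{\cP}(A)$, i.e. $\fm\in L_{\cP}(A)$ iff $\phi(\fm)\in L_{\cP}(B)$. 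Taking complements in $\Maxspec$ gives the corresponding statement for the discriminant set $D_{\cP}$, which is Definition~\ref{def.P}(2).

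For the discriminant ideal, I would use that the bijection $\fm\mapsto\phi(\fm)$ from $D_{\cP}(A)$ to $D_{\cP}(B)$ is induced by the ring isomorphism $\phi|_{\pcnt}$, so
\[
\phi\Bigl(\bigcap_{\fm\in D_{\cP}(A)}\fm\Bigr)=\bigcap_{\fm\in D_{\cP}(A)}\phi(\fm)=\bigcap_{\fn\in D_{\cP}(B)}\fn,
\]
that is $\phi(I_{\cP}(A))=I_{\cP}(B)$; here the first equality uses that an isomorphism commutes with arbitrary intersections of subspaces. Finally, if $I_{\cP}(A)$ is principal, say $I_{\cP}(A)=d\pcnt(A)$ with $d\in\pcnt(A)$, then $I_{\cP}(B)=\phi(d)\pcnt(B)$ is principal with generator $\phi(d)$, so the discriminant exists for $B$ and $d_{\cP}(B)=\phi(d_{\cP}(A))$ up to a unit of $\pcnt(B)$. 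This is the last assertion.

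There is no real obstacle here; the statement is essentially a bookkeeping exercise once one observes that every object in Definition~\ref{def.P} is built functorially out of $\pcnt(A)$ and the quotients $A/\fm A$, both of which transport along $\phi$. The only point requiring a word of care is the principal-ideal clause, where one must note that the generator is only well-defined up to a unit, and so the equality $d_{\cP}(B)=\phi(d_{\cP}(A))$ is an equality of elements of $\pcnt(B)$ modulo $\pcnt(B)^\times$ --- consistent with the uniqueness caveat already stated in Definition~\ref{def.P} when $\pcnt$ is a domain.
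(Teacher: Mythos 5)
Your proposal is correct and simply writes out in full the verification that the paper dismisses as ``obvious from Definition \ref{def.P}'': transport everything along the induced isomorphism $\pcnt(A)\to\pcnt(B)$ and the quotient isomorphisms $A/\fm A\to B/\phi(\fm)B$, then use isomorphism-invariance of $\cP$. There is no divergence from the paper's (omitted) argument, and your extra care about the discriminant being defined only up to a unit of $\pcnt(B)^\times$ is consistent with the caveat in Definition \ref{def.P}.
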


Most of our results will require that $\cP$ is a stable property.
The next lemma, which generalizes \cite[Lemma 6.1]{LWZ2}, 
allows us to work with any property when $\kk$ is algebraically closed.

\begin{lemma}\label{lemm.properP}
Let $\kk$ be algebraically closed. Then any property $\cP$ is stable. 
\end{lemma}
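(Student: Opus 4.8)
The plan is to reduce the asserted identity $I_{\cP}(A\tensor\kk[t_1,\dots,t_n]) = I_{\cP}(A)\tensor\kk[t_1,\dots,t_n]$ to an explicit description of $\maxspec$ of the relevant Poisson center together with its fiber algebras. Fix a Poisson $\kk$-algebra $A$ with affine Poisson center $Z:=\pcnt(A)$ and an integer $n\ge 1$, and set $A':=A[t_1,\dots,t_n]$. By Lemma \ref{lem.cnt} we have $\pcnt(A') = Z[t_1,\dots,t_n]$, which is again affine over $\kk$. First I would identify $\maxspec(Z[t_1,\dots,t_n])$ with $\maxspec(Z)\times\kk^n$: since $\kk$ is algebraically closed and $Z[t_1,\dots,t_n]$ is affine, Zariski's lemma forces $Z[t_1,\dots,t_n]/M\iso\kk$ for every maximal ideal $M$, so $M$ is the kernel of an evaluation homomorphism $Z[t_1,\dots,t_n]\to\kk$; restricting that homomorphism to $Z$ and reading off the images $\lambda_i\in\kk$ of the $t_i$ shows
\[ M \;=\; M_{\fm,\lambda}\;:=\;\fm\,Z[t_1,\dots,t_n] + (t_1-\lambda_1,\dots,t_n-\lambda_n) \]
for a unique $\fm\in\maxspec(Z)$ and $\lambda=(\lambda_1,\dots,\lambda_n)\in\kk^n$, and conversely each such $M_{\fm,\lambda}$ is maximal.

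Next I would compute the fiber algebras $A'/M_{\fm,\lambda}A'$. Writing out generators, $M_{\fm,\lambda}A' = \fm A' + (t_1-\lambda_1,\dots,t_n-\lambda_n)A'$. Each $t_i-\lambda_i$ is Poisson central in $A'$, so $(t_1-\lambda_1,\dots,t_n-\lambda_n)A'$ is a Poisson ideal, and the substitution $t_i\mapsto\lambda_i$ is a Poisson isomorphism $A'/(t_1-\lambda_1,\dots,t_n-\lambda_n)A'\iso A$ carrying the image of $\fm A'$ onto $\fm A$; here $\fm A$ is a Poisson ideal of $A$ because $z\in\fm\subseteq Z$ gives $\{za,b\}=z\{a,b\}\in\fm A$. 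Hence $A'/M_{\fm,\lambda}A'\iso A/\fm A$ as Poisson algebras, and since $\cP$ is invariant under isomorphism, $A'/M_{\fm,\lambda}A'$ has property $\cP$ exactly when $A/\fm A$ does. Consequently, under the identification above, $L_{\cP}(A')$ corresponds to $L_{\cP}(A)\times\kk^n$ and $D_{\cP}(A')$ corresponds to $D_{\cP}(A)\times\kk^n$.

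Finally I would carry out the intersection. For fixed $\fm\in\maxspec(Z)$ all the ideals $M_{\fm,\lambda}$ contain $\fm Z[t_1,\dots,t_n]$, and their images in $(Z/\fm)[t_1,\dots,t_n]\iso\kk[t_1,\dots,t_n]$ are the ideals $(t_1-\lambda_1,\dots,t_n-\lambda_n)$, whose intersection over all $\lambda\in\kk^n$ is $0$ because $\kk$ is infinite; hence $\bigcap_{\lambda\in\kk^n}M_{\fm,\lambda}=\fm Z[t_1,\dots,t_n]$. Intersecting now over $\fm\in D_{\cP}(A)$ and using that an element of $Z[t_1,\dots,t_n]$ lies in $J\,Z[t_1,\dots,t_n]$ if and only if all of its coefficients lie in $J$ — so that $J\mapsto J\,Z[t_1,\dots,t_n]$ commutes with arbitrary intersections of ideals of $Z$ — I obtain
\[ I_{\cP}(A') \;=\; \bigcap_{\fm\in D_{\cP}(A)}\fm\,Z[t_1,\dots,t_n] \;=\; \Bigl(\,\bigcap_{\fm\in D_{\cP}(A)}\fm\Bigr)Z[t_1,\dots,t_n] \;=\; I_{\cP}(A)\tensor\kk[t_1,\dots,t_n], \]
and the degenerate case $D_{\cP}(A)=\emptyset$, where both sides equal $\pcnt(A')$, is immediate. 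The main obstacle — in fact essentially the only input beyond routine bookkeeping — is the spectrum identification in the first step: it relies on the Nullstellensatz (Zariski's lemma) to see that every maximal ideal of the affine algebra $\pcnt(A')$ is $\kk$-rational of the stated form, together with the elementary fact that a polynomial over the infinite field $\kk$ vanishing at every $\kk$-point is zero. Once these are granted, compatibility with the Poisson structure and the manipulation of discriminant ideals are straightforward.
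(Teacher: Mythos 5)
Your proof is correct and follows essentially the same route as the paper's: use the Nullstellensatz to identify each maximal ideal of $\pcnt(A)[t_1,\dots,t_n]$ with a pair $(\fm,\lambda)$ and show the fiber $A'/M_{\fm,\lambda}A'$ is Poisson-isomorphic to $A/\fm A$, then compute the intersection of the discriminant set. The paper delegates the final intersection computation to \cite[Lemma 6.1]{LWZ2}, whereas you carry it out explicitly (using that $\kk$ is infinite and that extension of ideals along $Z \to Z[t_1,\dots,t_n]$ commutes with arbitrary intersections), which is a welcome filling-in of detail rather than a different method.
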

\begin{proof}
Let $A$ be any Poisson algebra over $\kk$ with affine Poisson center $\pcnt$. By Hilbert's Nullstellensatz, there is a natural projection $\pi: \Maxspec(\pcnt[t_1,\dots,t_n])\twoheadrightarrow \Maxspec(\pcnt)$ such that 
$A[t_1,\dots,t_n]/\fm A[t_1,\dots,t_n]\cong A/\pi(\fm)A$
as Poisson algebras over $\kk$ for any ideal $\fm \in \Maxspec(\pcnt[t_1,\dots,t_n])$. 
It now follows in a similar way to \cite[Lemma 6.1]{LWZ2} that $I_{\cP}(A \tensor \kk[t_1,\hdots,t_n]) =  I_{\cP}(A) \tensor \kk[t_1,\hdots,t_n]$ and so $\cP$ is a stable property.
\end{proof}

Our next result is a generalization of Lemma \ref{lemm.cancel} using the Poisson discriminant.

\begin{lemma}
\label{lemm.dretract}
Let $A$ be a Poisson algebra with affine Poisson center $\pcnt(A)$. Let $\cP$ be a stable property and assume that the $\cP$-discriminant $d = d_{\cP}(A)$ of $A$ exists. 
\begin{enumerate}
\item Suppose $A$ is an affine domain. If $\pmlds(A[t])=A$, then $A$ is Poisson retractable.
\item Suppose $A$ is an affine domain. If $A$ is strongly $\plndds$-rigid, then $A$ is strongly Poisson retractable.
\item Suppose $\pcnt(A)$ is a domain. If $\mlds(\pcnt(A)[t])=\pcnt(A)$, then $A$ is $\pcnt$-retractable.
\item Suppose $\pcnt(A)$ is a domain. If $\pcnt(A)$ is strongly $\lndds$-rigid, then $A$ is strongly $\pcnt$-retractable.
\end{enumerate}
\end{lemma}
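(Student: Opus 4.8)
The plan is to reduce each of (1)--(4) to the corresponding part of Lemma \ref{lemm.cancel} by showing that the existence of the $\cP$-discriminant makes the subscript $d$ redundant in the relevant Makar--Limanov data. The crux is the following: for every $n\ge1$, every $\partial\in\plnds(A[t_1,\dots,t_n])$ satisfies $d\in\ker\partial$, so that $\plnds(A[t_1,\dots,t_n])=\plndds(A[t_1,\dots,t_n])$ and hence $\pmls(A[t_1,\dots,t_n])=\pmlds(A[t_1,\dots,t_n])$. We may assume $d\ne0$, since otherwise these identities are automatic and the statements are exactly those of Lemma \ref{lemm.cancel}. To prove the crux, note that $\pcnt(A)$ is a domain (it lies inside $A$ in (1)--(2) and is assumed to be a domain in (3)--(4)); hence by Lemma \ref{lem.cnt} the Poisson center of $C:=A[t_1,\dots,t_n][s]$ is $\pcnt(A)[t_1,\dots,t_n,s]$, a domain with unit group $\pcnt(A)^\times$, and since $\cP$ is stable, $I_{\cP}(C)=I_{\cP}(A)\otimes\kk[t_1,\dots,t_n,s]=d\,\pcnt(C)$, so $d$ is the $\cP$-discriminant of $C$. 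Now fix $\partial\in\plnds(A[t_1,\dots,t_n])$; when $*$ is blank (and $\ch\kk=0$) replace $\partial$ by the iterated higher Poisson derivation $(\partial^i/i!)_{i\ge0}$, so that in either case $G_{\partial,s}\colon C\to C$, $a\mapsto\sum_{i\ge0}\partial_i(a)s^i$, $s\mapsto s$, is a Poisson algebra automorphism. By Lemma \ref{lem.prop}, $G_{\partial,s}$ preserves the $\cP$-discriminant ideal $d\,\pcnt(C)$ of $C$, so $G_{\partial,s}(d)=u\,d$ for some $u\in\pcnt(C)^\times=\pcnt(A)^\times$; in particular $u$ is free of $s$. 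Comparing coefficients of $s^i$ in $\sum_{i\ge0}\partial_i(d)s^i=u\,d$ gives $\partial_0(d)=d=ud$ (whence $u=1$) and $\partial_i(d)=0$ for all $i\ge1$, which is the crux.

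With this in hand, parts (1) and (2) are immediate: the hypothesis $\pmlds(A[t])=A$ (resp.\ strong $\plndds$-rigidity of $A$) becomes $\pmls(A[t])=A$ (resp.\ strong $\plnds$-rigidity of $A$), and Lemma \ref{lemm.cancel}(1) (resp.\ (2)) gives that $A$ is (strongly) Poisson retractable. For (3) and (4), in view of Lemma \ref{lemm.cancel}(3)--(4) it suffices to verify $\pcnt(A)\subseteq\pmls(A[t_1,\dots,t_n])$ for the relevant range of $n$ (only $n=1$ for (3)). A Poisson derivation of $A[t_1,\dots,t_n]$ preserves its Poisson center $\pcnt(A)[t_1,\dots,t_n]$ (for $*=\H$ because the automorphism $G_{\partial,s}$ preserves the center), so any $\partial\in\plnds(A[t_1,\dots,t_n])$ restricts to some $\bar\partial\in\lnds(\pcnt(A)[t_1,\dots,t_n])$; by the crux $d\in\ker\bar\partial$, i.e.\ $\bar\partial\in\lndds(\pcnt(A)[t_1,\dots,t_n])$. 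The hypothesis ($\mlds(\pcnt(A)[t])=\pcnt(A)$, resp.\ strong $\lndds$-rigidity of $\pcnt(A)$) then forces $\bar\partial$, and hence $\partial$, to vanish on $\pcnt(A)$. Thus $A$ is (strongly) $\plndps$-rigid and Lemma \ref{lemm.cancel}(3) (resp.\ (4)) applies.

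The main obstacle is the crux, and inside it the point that the unit $u$ with $G_{\partial,s}(d)=ud$ belongs to $\pcnt(A)^\times$, so that it does not involve the auxiliary variable $s$; this is exactly where the stability of $\cP$ (so that $d$ remains a discriminant after adjoining the polynomial variables) and the domain hypothesis on $\pcnt(A)$ (pinning down the unit group of the extended Poisson center and guaranteeing $d$ is a non-zero-divisor) are both indispensable. A minor additional verification is that a locally nilpotent (higher) Poisson derivation of $A[t_1,\dots,t_n]$ restricts to a locally nilpotent (higher) derivation of $\pcnt(A)[t_1,\dots,t_n]$, which again follows from the fact that the associated automorphism $G_{\partial,s}$ preserves the Poisson center.
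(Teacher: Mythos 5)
Your proof is correct, but it takes a genuinely different route from the paper's. The paper works directly with an arbitrary Poisson isomorphism $\phi:A[t_1,\dots,t_n]\xrightarrow{\sim}B[s_1,\dots,s_n]$: stability of $\cP$ together with Lemma \ref{lem.prop} forces $\phi(d)$ to agree with the $\cP$-discriminant $d'$ of $B$ up to a unit of $\pcnt(B)$, so $\phi$ carries $\pmlds$ (resp. $\mlds$) of the left-hand side onto $\ml^*_{d'}$ of the right-hand side; the rigidity hypothesis then identifies the left-hand invariant with $A$ (resp. $\pcnt(A)$), Lemma \ref{lem.ddx} places the image inside $B$ (resp. $\pcnt(B)$), and Lemma \ref{lem.subalg} upgrades the inclusion to equality — in effect a re-run of the proof of Lemma \ref{lemm.cancel} with the $d$-relative invariants. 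You instead prove once and for all that the subscript $d$ is vacuous: the automorphism $G_{\partial,s}$ of $A[t_1,\dots,t_n][s]$ must preserve the discriminant ideal $d\,\pcnt(A)[t_1,\dots,t_n,s]$, and since the extended Poisson center is a domain with unit group $\pcnt(A)^\times$, comparison of $s$-coefficients (using $\partial_0=\id$) yields $\partial_i(d)=0$ for $i\ge1$; hence $\plnds=\plndds$ and $\pmls=\pmlds$ on every polynomial extension, and Lemma \ref{lemm.cancel} applies verbatim. Your version buys a cleaner modular structure — Lemma \ref{lemm.cancel} is used as a black box rather than re-proved — and it isolates exactly the statement that the paper's Remark \ref{rem.d} asserts ("applying the arguments in Lemma \ref{lemm.dretract}") but never writes out; the paper's version, by transporting $d$ to $d'$ across $\phi$, avoids having to discuss $G_{\partial,s}$ at all and stays closer to the template of \cite[Lemma 3.4]{BZ2}. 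The only points to be careful about in your argument, which you do flag, are that the passage from $*$ blank to the iterative higher derivation $(\partial^i/i!)$ requires $\ch\kk=0$, and that the restriction of $\partial$ to the Poisson center in parts (3)--(4) remains locally nilpotent in the higher-derivation sense because $G_{\partial,s}$ restricts to an automorphism of the center.
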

\begin{proof}
We prove (4). The proofs of (1)-(3) are similar.

Let $\phi:A[t_1,\hdots,t_n] \xrightarrow{\sim} B[s_1,\hdots,s_n]$ be an isomorphism for some Poisson algebra $B$ and some $n \geq 1$. Note that the property $\cP$ is stable and the $\cP$-discriminant ideal is invariant under any Poisson isomorphism by Lemma \ref{lem.prop}.
Write $1$ as the identity element in $\kk[t_1,\dots,t_n]$. Hence,
\begin{align*}
\phi(d \tensor 1) 
	&= \phi( (d) \tensor \kk[t_1,\hdots,t_n] )
	= \phi( I_{\cP}(A[t_1,\hdots,t_n]) ) \\
    &= I_{\cP}(B[s_1,\hdots,s_n])
    = I_{\cP}(B) \tensor \kk[s_1,\hdots,s_n].
\end{align*}
Thus, $I_{\cP}(B)=(d')$ where $d'$ is the $\cP$-discriminant of $B$.
Moreover, the above computation implies that
$\phi(d \tensor 1) =_{\pcnt(B[s_1,\hdots,s_n])^\times} d' \tensor 1'$,
where $1'$ is the identity element in $\kk[s_1,\hdots,s_n]$.
However, 
$$\pcnt(B[s_1,\hdots,s_n])\cong \pcnt(A[t_1,\dots,t_n])\cong \pcnt(A)[t_1,\dots,t_n]$$ 
is a domain, so $\pcnt(B[s_1,\hdots,s_n])^\times=\pcnt(B)^\times$.
It follows that $\phi(d)=_{\pcnt(B)^\times}d'$. Since $\pcnt$ is $\lndds$-rigid, we have
\[\phi(\pcnt(A)) 
	= \phi(\mlds(\pcnt(A) \tensor \kk[t_1,\hdots,t_n]))
	= \ml_{d'}^*(\pcnt(B) \tensor \kk[s_1,\hdots,s_n])
    \subseteq \pcnt(B),
\]
where the last inclusion follows from Lemma \ref{lem.ddx}. By Lemma \ref{lem.subalg}, $\phi(\pcnt(A))=\pcnt(B)$ and $A$ is strongly $\pcnt$-retractable.
\end{proof}

Now we can generalize our previous Theorem \ref{thm.plndcan} by applying the Poisson discriminant. 
\begin{theorem}
\label{thm.ZLPNDretract}
Let $A$ be a Poisson algebra with affine Poisson center $\pcnt=\pcnt(A)$.
Let $\cP$ be a stable property and assume that the $\cP$-discriminant $d = d_{\cP}(A)$ of $A$ exists. 
\begin{enumerate}
\item Suppose $A$ is an affine domain. If $\pmlds(A[t])=A$, then $A$ is Poisson cancellative.
\item Suppose $A$ is an affine domain. If $A$ is strongly $\plndds$-rigid, then $A$ is strongly Poisson cancellative.
\item Suppose $A$ is noetherian and $\pcnt$ is a domain. If $\mlds(\pcnt[t])=\pcnt$, then  $A$ is Poisson cancellative.
\item Suppose $A$ is noetherian and $\pcnt$ is a domain. If $\pcnt$ is strongly $\lndds$-rigid, then $A$ is strongly Poisson cancellative.
\end{enumerate}
\end{theorem}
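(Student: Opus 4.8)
The plan is to deduce Theorem \ref{thm.ZLPNDretract} from the retractability statement in Lemma \ref{lemm.dretract} together with the bridge between retractability/detectability and cancellation already established in Section \ref{sec.artcnt}, exactly mirroring how Theorem \ref{thm.plndcan} was obtained from Lemma \ref{lemm.cancel}. Concretely, parts (1) and (2) follow immediately: under their hypotheses Lemma \ref{lemm.dretract}(1)--(2) gives that $A$ is (strongly) Poisson retractable, and then Lemma \ref{lemm.retrdc}(1) upgrades this to $A$ being (strongly) Poisson cancellative. No noetherian hypothesis is needed here because Poisson retractability implies Poisson cancellativity unconditionally.

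For parts (3) and (4), the hypotheses of Lemma \ref{lemm.dretract}(3)--(4) yield that $A$ is (strongly) $\pcnt$-retractable. Now invoke Theorem \ref{thm.dectcan}(3): since $A$ is noetherian and (strongly) $\pcnt$-retractable, $A$ is (strongly) Poisson cancellative. This is the same chain of implications used in the proof of Theorem \ref{thm.plndcan}(3)--(4), with Lemma \ref{lemm.cancel} replaced by its discriminant-enhanced version Lemma \ref{lemm.dretract}. So the full proof is just:

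\begin{proof}
Parts (1) and (2) follow from Lemma \ref{lemm.dretract}(1)--(2) and Lemma \ref{lemm.retrdc}(1). Parts (3) and (4) follow from Lemma \ref{lemm.dretract}(3)--(4) and Theorem \ref{thm.dectcan}(3).
\end{proof}

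In short, this theorem is a formal corollary and there is no real obstacle; all the work has been front-loaded into Lemma \ref{lemm.dretract} (where the $\cP$-discriminant is used to force $\phi$ to map the discriminant element of $A$ to that of $B$ up to a unit, so that the $\ml_d^*$-invariants match up under $\phi$) and into the detectability-to-cancellation machinery of Theorem \ref{thm.dectcan}. The only point requiring a moment's care is making sure the noetherian hypothesis is invoked precisely where Hopficity is needed — that is, in (3) and (4) via Theorem \ref{thm.dectcan}(3) — and is correctly omitted in (1) and (2), where retractability alone suffices.
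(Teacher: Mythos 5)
Your proof is correct and coincides with the paper's own argument: parts (1)--(2) from Lemma \ref{lemm.dretract}(1)--(2) together with Lemma \ref{lemm.retrdc}(1), and parts (3)--(4) from Lemma \ref{lemm.dretract}(3)--(4) together with Theorem \ref{thm.dectcan}(3). Your remark about where the noetherian hypothesis enters is also consistent with how the paper structures these implications.
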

\begin{proof}
(1)-(2) are from Lemma \ref{lemm.dretract}(1)-(2) and Lemma \ref{lemm.retrdc}(1).

(3)-(4) are from Lemma \ref{lemm.dretract}(3)-(4) and Theorem \ref{thm.dectcan}(3).
\end{proof}

\begin{corollary}
Let $\kk$ be algebraically closed of characteristic zero and let $A$ be a Poisson algebra with affine Poisson center $\pcnt=\pcnt(A)$. Let $\cP$ be any property and assume that the $\cP$-discriminant $d = d_{\cP}(A)$ of $A$ exists. 
\begin{enumerate}
\item Suppose $A$ is an affine domain. If $A$ is (strongly) $\plndds$-rigid, then $A$ is (strongly) Poisson cancellative.
\item Suppose $A$ is noetherian and $\pcnt$ is a domain. If $\pcnt$ is (strongly) $\lndds$-rigid, then $A$ is (strongly) Poisson cancellative.
\end{enumerate}
\end{corollary}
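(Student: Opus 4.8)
The plan is to read this corollary off from Theorem~\ref{thm.ZLPNDretract} by observing that over an algebraically closed field the standing ``stable property'' hypothesis there is automatic. Write $d = d_{\cP}(A)$ for the $\cP$-discriminant, which exists by assumption and lies in the Poisson center $\pcnt$; note that in part~(1) the center $\pcnt$ is a domain since $A$ is, and in part~(2) this is assumed outright. All Makar--Limanov invariants below are taken with respect to this $d$.

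First I would apply Lemma~\ref{lemm.properP}: as $\kk$ is algebraically closed, every property $\cP$ is stable, so both hypotheses of Theorem~\ref{thm.ZLPNDretract} (stability of $\cP$ and existence of $d_{\cP}(A)$) hold, and all four of its conclusions are available for $A$. For the ``strongly'' half of part~(1), strong $\plndds$-rigidity of $A$ is exactly the hypothesis of Theorem~\ref{thm.ZLPNDretract}(2), giving strong Poisson cancellativity. For the non-strong half of part~(1), I would use Lemma~\ref{lem.plnd}(1) --- legitimate since $\ch(\kk)=0$ and $A$ is an affine domain --- to upgrade $\plndds$-rigidity of $A$ to the equality $\pmlds(A[t])=A$, and then quote Theorem~\ref{thm.ZLPNDretract}(1). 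Part~(2) is the same with $\pcnt$ in the role of $A$: strong $\lndds$-rigidity of $\pcnt$ feeds directly into Theorem~\ref{thm.ZLPNDretract}(4), whereas $\lndds$-rigidity of $\pcnt$ yields $\mlds(\pcnt[t])=\pcnt$ by Lemma~\ref{lem.plnd}(1) applied to $\pcnt$ viewed as a Poisson algebra with trivial bracket (so that $\plnd$, $\pml$ become $\lnd$, $\ml$), after which Theorem~\ref{thm.ZLPNDretract}(3) closes the argument; here one uses that $\pcnt$ is an affine domain.

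Since every ingredient is already in hand, I do not expect a genuine obstacle: this is an exercise in matching hypotheses. The only step that is not purely formal is the passage from a rigidity statement for $A$ (or $\pcnt$) to the corresponding Makar--Limanov-invariant equality demanded by Theorem~\ref{thm.ZLPNDretract}(1),(3), and that is precisely what Lemma~\ref{lem.plnd} supplies in characteristic zero; in the ``strongly'' cases even this translation is unnecessary, because Theorem~\ref{thm.ZLPNDretract}(2),(4) accept strong rigidity as their input directly. For the variant $* = \H$ one replaces Lemma~\ref{lem.plnd} by its higher-Poisson-derivation analogue throughout.
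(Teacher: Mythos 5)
Your proposal is correct and matches the paper's own proof, which likewise invokes Lemma \ref{lemm.properP} to get stability of $\cP$ over an algebraically closed field and then combines Theorem \ref{thm.ZLPNDretract} with Lemma \ref{lem.plnd}(1). Your extra bookkeeping about which part of Theorem \ref{thm.ZLPNDretract} handles the strong versus non-strong cases (and where Lemma \ref{lem.plnd} is actually needed) is just a more explicit rendering of the same argument.
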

\begin{proof}
Since $\kk$ is algebraically closed, the property $\cP$ is stable by Lemma \ref{lemm.properP}. Then the result follows from Theorem \ref{thm.ZLPNDretract} and Lemma \ref{lem.plnd}(1).
\end{proof}

\begin{remark}\label{rem.d}
Let $A$ be a Poisson algebra with affine Poisson center $\pcnt(A)$, and $\cP$ be a stable property such that the $\cP$-discriminant $d = d_{\cP}(A)$ of $A$ exists. Applying the arguments in Lemma \ref{lemm.dretract}, we can show that the following hold. 
\begin{enumerate} 
     \item If $A$ is (strongly) $\plndds$-rigid, then it must be (strongly) $\plnds$-rigid.
    \item If either $A$ is (strongly) $\plnddps$-rigid or $\pcnt(A)$ is (strongly) $\lndds$-rigid, then $A$ is (strongly) $\plndps$-rigid. 
\end{enumerate}
\end{remark}

According to \cite[\S5]{BZ2}, effectiveness of the discriminant controls $\lndh$-rigidity and hence solves the $\textbf{ZCP}$. For Poisson algebras, we will see that effectiveness of the Poisson discriminant plays the same role in the \textbf{PZCP}. 

\begin{definition}
Let $A$ be an affine (Poisson) commutative domain and suppose that $Y=\bigoplus_{i=1}^n\kk x_i$ generates $A$ as an algebra. An element $0 \neq f \in A$ is called {\sf (Poisson) effective} if the following conditions hold.
\begin{enumerate}
\item There is a (Poisson) $\NN$-filtration $F$ on $A$ such that $\gr_F(A)$ is a domain. With this filtration we define the degree of elements in $A$, denoted by $\deg_A$.
\item For every $\NN$-filtered (Poisson) commutative algebra $T$ with $\gr T$ being an $\NN$-graded (Poisson) domain and for every testing subset $\{y_1,\hdots,y_n\} \subset T$ satisfying
\begin{enumerate}
\item it is linearly independent in the quotient $\kk$-module $T/\kk 1_T$, and
\item $\deg y_i \geq \deg x_i$ for all $i$ and $\deg y_{i_0} > \deg x_{i_0}$ for some $i_0$,
\end{enumerate}
there is a presentation of $f$ of the form $f(x_1,\hdots,x_n)$ in the free algebra $\kk[x_1,\hdots,x_n]$, such that either $f(y_1,\hdots,y_n)$ is zero or $\deg_T f(y_1,\hdots,y_n) > \deg_A f$.
\end{enumerate}
\end{definition}

\begin{remark}
\begin{enumerate}
    \item Our definition of effectiveness borrows from the definition for noncommutative algebras \cite[Definition 5.1]{BZ2}. The key difference there is that the testing algebra should be PI. In our case, that is not necessary because we assume that every algebra is commutative.
    \item There is another concept, called ``dominating", see \cite[Definition 4.5]{BZ2} or \cite[Definition 2.1(2)]{CPWZ1}, which is slightly different from the definition of effectiveness. In this paper, we do not state this concept explicitly.
    \item In many applications of Poisson discriminant we do not need the filtration appearing in the definition of effectiveness to be a Poisson filtration. So we will just use the effectiveness of Poisson discriminant for commutative algebras.
\end{enumerate}
\end{remark}

\begin{example}
\label{ex.kx}
There are some simple examples of effective elements in polynomial algebras. 
\begin{enumerate}
\item For the polynomial algebra $\kk[x]$ it is observed in \cite[Example 2.8]{LWZ1} that any nonzero element $f \in \kk[x]$ is effective. 
In a similar way it is Poisson effective.
\item Let $A=\kk[x_1,\dots, x_n]$ be a
Poisson algebra.
A monomial $x_1^{b_1}\cdots x_n^{b_n}$ in $A$ is said to have degree component-wise less than 
$x_1^{a_1}\cdots x_n^{a_n}$ if $b_i\le a_i$ for all $i$ and $b_{i_0}<a_{i_0}$ for some $i_0$. We write $f=cx_1^{a_1}\cdots x_n^{a_n}+(\text{cwlt})$ if $f-cx_1^{a_1}\cdots x_n^{a_n}$ is a linear combination of monomials with degree compoent-wise less than $x_1^{a_1}\cdots x_n^{a_n}$. 
If $f=cx_1^{a_1}\cdots x_n^{a_n}+(\text{cwlt})$ satisfies $c\neq 0$ and $a_1\cdots a_n>0$, then it follows from \cite[Lemma 2.2(1)]{LWZ1} that $f$ is effective in $A$.
\end{enumerate}
\end{example}

The next result 
is parallel to the corresponding result for discriminants of noncommutative (associative) algebras.
We omit the proof and refer the reader to \cite{BZ2,CPWZ1,CPWZ2} for details. 

\begin{theorem}
Let $A$ be an affine Poisson algebra with generating subspace $Y=\bigoplus_{i=1}^n \kk x_i$, and the associated graded algebra $\gr_Y A$ is a connected graded (Poisson) domain. Let $\cP$ be a stable property and assume that the $\cP$-discriminant $d=d_{\cP}(A)$ of $A$ exists. If $d$ is (Poisson) effective in $A$, then $\Aut_P(A)\subset \text{GL}(Y\oplus \kk)$ is an algebraic group that fits into an exact sequence 
\[
\xymatrix{
1\ar[r] &(\kk^\times)^r\ar[r] &\Aut_P(A)\ar[r] & S\ar[r] &1
}
\]
for some finite group $S$.
\end{theorem}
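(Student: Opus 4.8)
The plan is to reduce everything to the associative (noncommutative) case treated in \cite{BZ2}, since the Poisson structure enters only through the requirement that automorphisms and the filtration be compatible with the bracket, but the core rigidity argument is purely about the polynomial ring structure and the discriminant. First I would observe that any Poisson automorphism $\sigma$ of $A$ is in particular an algebra automorphism, so it permutes the maximal ideals $\fm$ of $\pcnt(A)$ in the discriminant set $D_{\cP}(A)$ and hence fixes the discriminant ideal $I_{\cP}(A)=(d)$; thus $\sigma(d) =_{\pcnt(A)^\times} d$. The effectiveness hypothesis then forces a degree constraint: if $\sigma$ did not preserve the filtration degree on the generating subspace $Y$ — say $\deg \sigma(x_i) > \deg x_i$ for some $i$ while $\deg \sigma(x_j) \ge \deg x_j$ for all $j$ — then applying the effectiveness condition with the testing set $\{y_i = \sigma(x_i)\}$ inside $T = A$ (with its own filtration) would give $\deg_A \sigma(d) = \deg_A f(\sigma(x_1),\dots,\sigma(x_n)) > \deg_A d$, contradicting $\sigma(d) =_{\pcnt^\times} d$ (units have degree zero in a connected graded domain). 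The same argument applied to $\sigma^{-1}$ rules out strict decreases. Hence every Poisson automorphism preserves $\deg$ on $Y$, and so its leading-term map induces a graded Poisson automorphism of $\gr_Y A$ that preserves $Y$; this is what lets us view $\Aut_P(A)$ inside $\mathrm{GL}(Y \oplus \kk)$.

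Next I would run the standard linearization/filtration argument: writing an automorphism $\sigma$ as $\sigma(x_i) = \ell_i + (\text{lower order})$ with $\ell_i$ linear in $Y \oplus \kk$, the assignment $\sigma \mapsto (\ell_i)_i$ is a group homomorphism $\Aut_P(A) \to \mathrm{GL}(Y \oplus \kk)$ which one checks is injective (an automorphism whose linear part is the identity is itself the identity, by comparing leading terms degree by degree in the connected graded domain $\gr_Y A$ — this is where one uses that $\gr_Y A$ is a domain). This realizes $\Aut_P(A)$ as a subgroup of $\mathrm{GL}(Y\oplus\kk)$; it is an algebraic subgroup because the conditions of preserving the multiplication, the Poisson bracket, and the defining relations are Zariski-closed conditions on the matrix entries. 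Then, following \cite[\S 5]{BZ2} and \cite{CPWZ1, CPWZ2}, one identifies the connected component: the torus $(\kk^\times)^r$ arises from the rescaling automorphisms $x_i \mapsto t_i x_i$ that are compatible with both the multiplicative relations and the bracket (the rank $r$ being the dimension of the subtorus of $(\kk^\times)^n$ preserving the relations and the Poisson bracket), and the quotient $S = \Aut_P(A)/(\kk^\times)^r$ is finite because an algebraic group of automorphisms of a connected graded domain that preserves the grading and contains no further positive-dimensional piece beyond the diagonal torus must have finite component group — essentially because the graded pieces $A_i$ are finite-dimensional and the relations cut the automorphism group down to a group extension of this shape.

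The step I expect to be the main obstacle is verifying that nothing is lost in translating the effectiveness machinery of \cite{BZ2} to the Poisson setting — specifically, making sure that when we apply the definition of effectiveness we may legitimately take the testing filtered algebra $T$ to be $A$ itself equipped with the filtration induced by $\sigma$, and that the degree inequality hypothesis $\deg y_i \ge \deg x_i$ with one strict inequality is exactly the scenario produced by a non-degree-preserving automorphism. In the associative case this is handled in \cite[Lemma 5.2 and Proposition 5.3]{BZ2}; here the Remark in the excerpt already notes that we may use effectiveness for commutative algebras without requiring the filtration to be Poisson, which removes the only place where the Poisson structure could obstruct the argument. So the proof really is a line-by-line transport of the associative argument, and the bulk of the work — which we omit, referring to \cite{BZ2, CPWZ1, CPWZ2} — is checking that each lemma there uses only properties (connected graded domain, effectiveness, invariance of the discriminant ideal under isomorphism via Lemma \ref{lem.prop}) that we have established in the Poisson category.
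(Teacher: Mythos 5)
Your proposal follows the same route the paper intends: the paper in fact omits the proof of this theorem entirely and refers to \cite{BZ2,CPWZ1,CPWZ2}, and your sketch is precisely the transport of that associative argument (invariance of the discriminant ideal under Poisson isomorphisms via Lemma \ref{lem.prop}, effectiveness forcing $\deg\sigma(x_i)=\deg x_i$, hence a closed embedding $\Aut_P(A)\hookrightarrow \mathrm{GL}(Y\oplus\kk)$). One correction to your final step: the finiteness of $S$ does not come from finite-dimensionality of the graded pieces, and your phrase ``contains no further positive-dimensional piece beyond the diagonal torus'' assumes what must be shown; the actual mechanism is that effectiveness of $d$ forces $\plndds$-rigidity (Lemma \ref{lemm.prigid}), so by Lemma \ref{lem:iterative} the algebraic group $\Aut_P(A)$ contains no copy of the additive group $\mathbb{G}_a$, and a linear algebraic group with no $\mathbb{G}_a$-subgroup has a torus as its identity component, which yields the exact sequence.
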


\begin{example}
Let $A=\mathbb C[x,y,z]$ be the Poisson algebra whose Poisson bracket is of Jacobian form given by the potential $f=x^2-yz$ such that
\[
\{x,y\} = f_z=-y, \quad
\{y,z\} = f_x=2x, \quad
\{z,x\} = f_y=-z.
\]
Since $f$ is homogeneous with isolated singularity at the origin, $\pcnt(A)=\mathbb C[f]$ by \cite[Proposition 4.2]{Pcnt}. Let $\cP$ be the property such that the symplectic foliation appearing in the maximal spectrum of the corresponding Poisson algebra has no 0 dimensional skeletons. It is clear that $A/(f-\lambda)$ has property $\cP$ for any $\lambda\in \mathbb C$ if and only if it has no maximal ideal that is also a Poisson ideal. One checks that the zero ideal is the only maximal ideal of $A$ that is also a Poisson ideal. Hence the $\cP$-discriminant of $A$ is given by $f$. Note $f$ 
is neither effective nor Poisson effective in $A$ since $\Aut_P(A)$ contains Nagata's wild automorphism \cite{nagata} defined by
\[
\sigma(x)=x+(x^2-yz)z,\quad
\sigma(y)=y+2(x^2-yz)x+(x^2-yz)^2z,\quad
\sigma(z)=z.
\]
On the contrary, $f$ is both effective and Poisson effective in the Poisson center $\pcnt(A)=\mathbb C[f]$, which implies that $A$ is strongly Poisson cancellative (see Theorem \ref{thm.prigid}(2) later).  
\end{example}

\begin{lemma}
\label{lemm.prigid}
Let $A$ be a Poisson algebra with affine Poisson center $\pcnt=\pcnt(A)$. Let $\cP$ be a stable property and assume that the $\cP$-discriminant $d = d_{\cP}(A)$ of $A$ exists. 
\begin{enumerate}
    \item Suppose $A$ is an affine domain. If $d$ is effective (respectively, Poisson effective) in $A$, then $A$ is strongly $\plndds$-rigid. 
     \item Suppose $\pcnt$ is a domain. If $d$ is effective in $\pcnt$, then $\pcnt$ is strongly $\lndds$-rigid. 
\end{enumerate}
\end{lemma}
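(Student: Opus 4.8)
The plan is to mimic the proof of the analogous statement for noncommutative algebras in \cite[\S 5]{BZ2}, specifically the argument that an effective discriminant forces $\lndh$-rigidity, adapting the bookkeeping to the Poisson setting. I will prove (1) in detail and indicate how (2) follows by specializing to the trivial-bracket case (where $\pcnt=A$ and Poisson derivations are just derivations). So suppose $A$ is an affine Poisson domain with affine Poisson center $\pcnt$, with generating subspace $Y=\bigoplus_{i=1}^n\kk x_i$, and assume $d=d_{\cP}(A)$ is effective (or Poisson effective). Fix $n\ge 1$; I must show that any $\partial\in\plndds(A[t_1,\dots,t_n])$ satisfies $A\subseteq\ker\partial$. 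By Lemma \ref{lem.ddx} we already know $\pmlds(A[t_1,\dots,t_n])\subseteq A$, so it suffices to rule out a $\partial$ that acts nontrivially on $A$ itself; equivalently, using Lemma \ref{lem:iterative}, to rule out a nontrivial Poisson $\kk[t_1,\dots,t_n]$-automorphism $G=G_{\partial,\mathbf t}$ of $A[t_1,\dots,t_n]$ fixing $d$ and fixing $A[t_1,\dots,t_n]$ modulo the appropriate ideal.

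The core mechanism is the interaction of such a $G$ with a filtration. Take the $\NN$-filtration $F$ on $A$ from the definition of effectiveness, so $\gr_F A$ is a (connected graded) domain, and extend it to $A[t_1,\dots,t_n]$ by assigning the $t_j$ suitable degrees; let $T=A[t_1,\dots,t_n]$ with this filtration, so $\gr T$ is an $\NN$-graded Poisson domain. The automorphism $G$ (or rather the locally nilpotent derivation $\partial_1$, or the higher derivation $\partial$) cannot strictly raise degrees on all generators, since degree considerations together with $G$ being an automorphism constrain the degrees of $G(x_i)$; the standard argument shows that after composing with a graded automorphism one may assume $\deg_T G(x_i)\ge\deg_A x_i$ for all $i$. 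If $G$ is nontrivial on $A$, then $\deg_T G(x_{i_0})>\deg_A x_{i_0}$ for some $i_0$ — this is exactly the ``testing subset'' hypothesis in the definition of effectiveness, with $y_i=G(x_i)$. Condition (2a) (linear independence modulo $\kk 1$) holds because $G$ is injective and the $x_i$ are independent modulo scalars. Now apply effectiveness: for the discriminant $d=d_{\cP}(A)=f(x_1,\dots,x_n)$ we get that either $f(G(x_1),\dots,G(x_n))=0$ or $\deg_T f(G(x_1),\dots,G(x_n))>\deg_A f$. But $f(G(x_1),\dots,G(x_n))=G(f(x_1,\dots,x_n))=G(d)=d$ by Lemma \ref{lem:iterative}(4), since $d\in\pcnt(A)$ and $\partial\in\plndds$; and $d\ne 0$ in the domain $A$, while $\deg_T d=\deg_A d$ because $d\in A$ and the filtration on $A$ is the restriction of that on $T$. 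This is a contradiction, so $G$ must be trivial on $A$, i.e. $A\subseteq\ker\partial$. Hence $A=\pmlds(A[t_1,\dots,t_n])$ for all $n\ge 1$, which is strong $\plndds$-rigidity.

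For part (2), apply the same argument verbatim to the commutative domain $\pcnt$ equipped with the effective (ordinary) filtration: here Poisson derivations are unconstrained derivations, locally nilpotent higher Poisson derivations of $\pcnt[t_1,\dots,t_n]$ are just locally nilpotent higher derivations, and the $d$-killing condition plus $d\in\pcnt$ gives $G(d)=d$ exactly as before; effectiveness of $d$ in $\pcnt$ then forces $\pcnt\subseteq\ker\partial$, i.e. $\pcnt$ is strongly $\lndds$-rigid.

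The step I expect to be the main obstacle is the degree-normalization: verifying that one can reduce to the case $\deg_T G(x_i)\ge\deg_A x_i$ for all $i$, and that the passage to the associated graded of $G$ (or of $\partial_1$, in the locally nilpotent derivation case) is compatible with the Poisson structure — this uses that $G$ is a \emph{Poisson} automorphism and that the filtration on $T=A[t_1,\dots,t_n]$ is chosen so that $\gr T$ remains a Poisson domain. The algebraic manipulations underlying this are exactly those of \cite[\S 4--5]{BZ2} and \cite[\S 2]{CPWZ1}, and one checks that the Poisson bracket causes no new difficulty because all the filtration axioms include the bracket; once that bookkeeping is in place, the contradiction with $G(d)=d$ is immediate. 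One also has to treat both values of $*$ uniformly (ordinary locally nilpotent Poisson derivations in characteristic zero, and higher Poisson derivations in general), but Lemma \ref{lem:iterative} packages both into the single automorphism $G_{\partial,\mathbf t}$, so the argument above covers both cases without modification.
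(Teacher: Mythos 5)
Your proposal is correct and follows essentially the same route as the paper: pass from a nontrivial $\partial\in\plndds(A[t_1,\dots,t_n])$ to the Poisson automorphism $G_{\partial,t}$ fixing $d$ (Lemma \ref{lem:iterative}), extend the filtration from the effectiveness hypothesis to the polynomial extension, take $\{G(x_i)\}$ as the testing subset, and contradict $\deg_T G(d)=\deg_A d$ with $G(d)=d\neq 0$. The degree normalization you flag as the main obstacle is dispatched in the paper simply by assigning sufficiently large degrees to the adjoined variables $t_1,\dots,t_n,t$ in $T=A[t_1,\dots,t_n][t]$, so that $\deg_T G(x_i)\ge\deg_A x_i$ automatically, with strict inequality exactly when $\partial_j(x_i)\neq 0$ for some $j\ge 1$; no composition with a graded automorphism is needed.
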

\begin{proof}
We will only prove (1). The proof of (2) is similar. Suppose $A$ is not strongly $\plndds$-rigid. Let $Y=\bigoplus_{i=0}^n \kk x_i$ be a minimal generating space of $A$.  Then there is $(\partial_i)_{i=0}^\infty\in \plnddh(A[t_1,\dots,t_n])$ for some $n\ge 1$ such that $x_{i_0}\not\in \ker \partial$ for some $1\le i_0\le n$. By Lemma \ref{lem:iterative}(1,4), we have $G\in \Aut_P(A[t_1,\dots,t_n][t])$ satisfying $G(d)=d$ and 
\[
G(x_i)=x_i+\sum_{j\ge 1} \partial_j(x_i)t^j.
\]
Since $d$ is (Poisson) effective in $A$, it implies that $A $ has a (Poisson) $\mathbb N$-filtration which naturally induces a filtration on its extension $T:=A[t_1,\dots,t_n][t]$ by assigning arbitrary degrees on $t_1,\dots,t_n$ and $t$. It is easy to see that once $\deg(t_1),\dots,\deg(t_n),\deg(t) \gg 0$, the $\mathbb N$-filtered (Poisson) algebra $T$ has a set $\{G(x_1),\cdots,G(x_n)\}$ where $\deg_T G(x_i)\ge \deg_A x_i$ and in particular $\deg_T G(x_{i_0})> \deg_A x_{i_0}$. But by (Poisson) effectiveness of $d$, we must have 
\[
\deg_A d<\deg_T d\left(G(x_1),\dots,G(x_n)\right)=\deg_T G(d)=\deg_T d=\deg_A d,
\]
a contradiction. Hence $A$ is strongly $\plndds$-rigid. 
\end{proof}

\begin{theorem}
\label{thm.prigid}
Let $A$ be a Poisson algebra with affine Poisson center $\pcnt=\pcnt(A)$. Let $\cP$ be a stable property and assume that the $\cP$-discriminant $d = d_{\cP}(A)$ of $A$ exists. 
\begin{enumerate}
    \item Suppose $A$ is an affine domain. If $d$ is effective (respectively, Poisson effective) in $A$, then $A$ is strongly Poisson cancellative. 
     \item Suppose $A$ is noetherian and $\pcnt$ is a domain. If $d$ is effective in $\pcnt$, then $A$ is strongly Poisson cancellative. 
\end{enumerate}
\end{theorem}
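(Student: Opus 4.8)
The plan is to deduce the theorem formally from the two results already in hand: the implication ``effectiveness $\Rightarrow$ rigidity'' of Lemma~\ref{lemm.prigid}, and the implication ``rigidity $\Rightarrow$ cancellation'' of Theorem~\ref{thm.ZLPNDretract}. No new construction is needed; the only point requiring care is that the hypotheses of the two cited statements line up on the nose.

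For part (1), I would first note that all standing hypotheses of Lemma~\ref{lemm.prigid}(1) are satisfied: $A$ is an affine Poisson domain with affine Poisson center $\pcnt$, the property $\cP$ is stable, the $\cP$-discriminant $d=d_{\cP}(A)$ exists, and $d$ is effective (respectively Poisson effective) in $A$. Hence Lemma~\ref{lemm.prigid}(1) gives that $A$ is strongly $\plndds$-rigid. Now Theorem~\ref{thm.ZLPNDretract}(2) has precisely the hypotheses ``$A$ an affine domain, $\cP$ stable, $d_{\cP}(A)$ exists, $A$ strongly $\plndds$-rigid'', so it applies and yields that $A$ is strongly Poisson cancellative.

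For part (2), the argument is the parallel one carried out on the Poisson center. Since $\pcnt$ is a domain and $d$ is effective in $\pcnt$, Lemma~\ref{lemm.prigid}(2) shows that $\pcnt$ is strongly $\lndds$-rigid. Combined with the hypotheses that $A$ is noetherian, $\pcnt$ is a domain, $\cP$ is stable, and $d_{\cP}(A)$ exists, this is exactly the input to Theorem~\ref{thm.ZLPNDretract}(4), which concludes that $A$ is strongly Poisson cancellative.

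I do not anticipate any real obstacle, since the theorem is a bookkeeping combination of earlier results; the substantive work --- Lemma~\ref{lem.ddx}'s containment $\pmlds(A[t_1,\dots,t_n])\subseteq A$, the degree/dimension comparison of Lemma~\ref{lem.subalg}, the retractability-to-cancellation chain of Theorem~\ref{thm.dectcan}, and the effectiveness degree estimate inside Lemma~\ref{lemm.prigid} --- has all been done upstream. The one subtlety worth flagging is the asymmetry between effective and Poisson effective: in part (1) both variants are allowed because the filtration produced by effectiveness need not be a Poisson filtration (and Lemma~\ref{lemm.prigid}(1) already accommodates this), whereas in part (2) the relevant object $\pcnt$ is just a commutative algebra, so only ordinary effectiveness is in play.
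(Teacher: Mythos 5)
Your proposal is correct and is exactly the paper's argument: the paper proves Theorem \ref{thm.prigid} by citing Lemma \ref{lemm.prigid} together with Theorem \ref{thm.ZLPNDretract}(2) and (4), which is precisely the two-step composition you describe. The hypotheses do line up as you checked, so no further work is needed.
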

\begin{proof}
The result follows from Lemma \ref{lemm.prigid} and Theorem 
\ref{thm.ZLPNDretract}(2,4).
\end{proof}

\begin{example}
\label{ex.jacdet}
Let $A=\kk[x,y,z]$ be a polynomial algebra over an algebraically closed field $\kk$ of characteristic zero. Suppose $A$ has a Jacobian bracket given by some homogeneous potential $f\in \kk[x,y,z]$ such that 
$\{x,y\} = f_z$, $\{y,z\} = f_x$, $\{z,x\} = f_y$.
Suppose $f$ has isolated singularities. That is, suppose $\kk[x,y,z]/(f_x,f_y, f_z)$ is a finite-dimensional $\kk$-algebra, or equivalently, $(f_x,f_y,f_z)$ is a regular sequence of length 3 in $\kk[x,y,z]$. By \cite[Proposition 4.2]{Pcnt} we have $\pcnt(A)=\kk[f]$. Let $\cP$ be the property of not having finite-dimensional simple Poisson modules. Then for any $\lambda\in \kk$, $A/(f-\lambda)$ has property $\cP$ if and only if no maximal ideal of $A/(f-\lambda)$ is a Poisson ideal by \cite[Lemma 3.1]{Jpoisson}. Moreover, any maximal ideal $(x-x_0,y-y_0,z-z_0)$ for any $x_0,y_0,z_0\in \kk$ is a Poison ideal if and only if 
$(x_0,y_0,x_0)\in \VV\left(f_x,f_y,f_z\right)=:S$,
which is a finite set of points in $\mathbb A^3$. Hence the $\cP$-discriminant of $A$ is given by
\[
d=\prod_{(a,b,c)\in S}\left(f(x,y,z)-f(a,b,c)\right)
\]
is a nonzero element in $\pcnt(A)=\kk[f]$, which is effective in $\pcnt(A)$ by Example \ref{ex.kx}(1). So $A$ is always strongly Poisson cancellative according to Theorem \ref{thm.prigid}(2).
\end{example}

\begin{example}\label{ex.Skly3}
Let $S$ be a three-dimensional Sklyanin algebra over an algebraically closed field $\kk$ of characteristic zero that is module-finite over its center $Z$. Let $(E,\mathscr L,\sigma)$ be the associated geometric data of $S$. It is well-known that $S$ has a central regular element $g$ of degree three and $S/gS\cong B(E,\mathscr L,\sigma)=:B$, the twisted homogeneous coordinate ring of $(E,\mathscr L,\sigma)$. When $S$ is PI, the automorphism $\sigma$ of the elliptic curve $E$ has finite order and $B$ is a GK-dimension two domain that is module-finite over its center $Z(B)\cong Z/gZ$. It is proved in \cite[Proposition 5.5(1)]{WWY1} that there exists a unique nonzero Poisson structure (up to scalars) on $Z$ if we assume $g$ to be in the corresponding Poisson center. We can show that  $Z$ is strongly Poisson cancellative in this case. We observe that $\pcnt(Z)=\kk[g]$. Since $Z/gZ\cong Z(B)$ is a Poisson domain of Krull dimension two, $Z/gZ$ has trivial Poisson center by Corollary \ref{cor.cancel1}. Then, by passing to $\pcnt(Z/gZ)=\kk$, an easy induction on degree of homogeneous elements in $\pcnt(Z)$ yields the result. Now let $\cP$ be the property of having no or at least two zero dimensional symplectic core skeletons in the symplectic core stratification of the corresponding maximal spectrum. Then \cite[Theorem 1.3]{WWY1} shows that the $\cP$-discriminant of $Z$ is exactly given by $g$. Hence $Z$ is strongly Poisson cancellative by Theorem \ref{thm.prigid}(2) and Example \ref{ex.kx}(1).
\end{example}

\section{Some remarks and questions}\label{S:RQ}
In this last section, we provide some remarks and questions for future projects. First of all, we summarize the related concepts that are introduced in our paper to the \textbf{PZCP}.
\[\small
\xymatrix{
{\begin{matrix}\text{(str.) Poisson} \\ \text{cancellative}\end{matrix}}
&&\text{(str.) Poisson detectable}\ar[ll]_-{\text{Theorem \ref{thm.dectcan}(1)}} && \text{(str.) $\pcnt$-detectable}\ar[ll]_-{\text{Lemma \ref{lemm.retrdc}(3)}}\\
&&\text{(str.) Poisson retractable}\ar@/^/[ull]^-{\text{Lemma \ref{lemm.retrdc}(1)}}\ar[u]^-{\text{Lemma \ref{lemm.retrdc}(1)}}\ar[rr]_-{\text{Lemma \ref{lemm.retrdc}(1)}} && \text{(str.) $\pcnt$-retractable}\ar[u]^-{\text{Lemma \ref{lemm.retrdc}(2)}}\\
&&\text{(str.) $\plnds$-rigid}\ar[u]^-{\text{Lemma \ref{lemm.cancel}(2)}}\ar[rr]_-{\text{Remark \ref{rem.d}(2)}} && \text{(str.) $\plndps$-rigid}\ar[u]^-{\text{Lemma \ref{lemm.cancel}(4)}}\\
&&\text{(str.) $\plndds$-rigid}\ar[u]^-{\text{Remark \ref{rem.d}(1)}}   && \text{$\pcnt$ is (str.) $\lndds$-rigid}\ar[u]^-{\text{Remark \ref{rem.d}(2)}} \\
&&\text{discr. $d$ effective in $A$}\ar[u]^-{\text{Lemma \ref{lemm.prigid}(1)}}  &&  \text{discr. $d$ effective in $\pcnt$}\ar[u]^-{\text{Lemma \ref{lemm.prigid}(2)}}\\
}
\]

When $A$ is a Poisson algebra with trivial Poisson bracket, 
\textbf{PZCP} for $A$ becomes the ordinary \textbf{ZCP} for $A$. This yields our first question.

\begin{question}
Let $A$ be a Poisson algebra. If $A$ is (strongly, universally) cancellative as a commutative algebra, is $A$ always (strongly, universally) cancellative in the sense of Poisson algebras?
\end{question}

The inverse implication of the above question does not hold. In Example \ref{nonZCP}(1), the coordinate ring of the real sphere $\mathbb R[x,y,z]/(x^2+y^2+z^2-1)$ is not cancellative. But we can endow it with a Jacobian bracket given by the potential $f=x^2+y^2+z^2$ such that $\{x,y\}=2z$, $\{y,z\}=2x$, and $\{z,x\}=2y$. One can show the resulting Poisson structure on the coordinate ring yields the trivial Poisson center. Hence it is universally Poisson cancellative by Theorem \ref{thm.univ}. So this yields our second question.

\begin{question}
Let $A$ be a commutative algebra that is not cancellative. Can we always endow $A$ with a Poisson bracket that makes it Poisson cancellative?
\end{question}

Our Theorem \ref{thm.iso} is a Poisson analogue of the isomorphsm lemma for connected graded algebras generated in degree one \cite[Theorem 1]{BZ1}. Note that the original isomorphism lemma for connected graded algebras has been generalized for graded path algebras \cite{Gpath}. We expect that our result can be extended to graded path Poisson algebras as well.

\begin{question}
Let $A$ and $B$ be two $\NN$-graded Poisson algebras that are finitely generated in degree 0 and 1. If $A\cong B$ as ungraded Poisson algebras, does $A\cong B$ as graded Poisson algebras? 
\end{question}

In Example \ref{ex.jacdet}, we showed that any polynomial algebra of three variables with Jacobian bracket is strongly Poisson cancellative if the potential related to the Jacobian form is homogeneous with isolated singularities. We would like to know if the cancellation property holds more generally.
\begin{question}
Is any polynomial Poisson algebra in three variables with nontrivial Jacobian bracket Poisson cancellative?
\end{question}

Moreover, every unimodular Poisson algebra on $\kk[x,y,z]$ has Jacobian bracket \cite[Theorem 5]{PRZ}. For noncommutative algebras, the \textbf{ZCP} was asked for Artin-Schelter regular algebras in \cite[Question 0.2]{BZ2}.  For a connected graded algebra, the (skew) Calabi-Yau property is equivalent to Artin-Schelter regularity \cite[Lemma 1.2]{RRZ}. Note that unimodularity of Poisson algebras is an analogue of Calabi-Yau property for noncommutative algebras. We refer the interested reader to \cite{LWZ4} for the notion of unimodularity and its connection to Calabi-Yau algebras.
Therefore, we are interested in the \textbf{PZCP} for unimodular Poisson  algebras.

\begin{question}
\label{ques.unimod}
Let $A=\CC[x_1,\dots,x_n]$ be a unimodular complex Poisson polynomial algebra with nontrivial Poisson bracket. When is $A$ Poisson cancellative? 
\end{question}

In Example \ref{ex.Skly3}, we show that for a three-dimensional Sklyanin algebra that is module-finite over its center, any nontrivial Poisson bracket on the center makes it strongly Poisson cancellative provided the canonical central regular element $g$ is in the Poisson center. A similar phenomena occurs with any four-dimensional Sklyanin algebra $S$ that is module-finite over its center $Z$. It is proved in \cite[Proposition 7.3]{WWY2} that the Poisson structure on $Z$ is uniquely determined as long as the two canonical central regular elements $g_1,g_2$ of $S$ are in the Poisson center of $Z$. 

\begin{question}
Let $S$ be a four-dimensional Sklyanin algebra that is module-finite over its center $Z$. Suppose $Z$ has a nontrivial Poisson structure where the two canonical central regular elements $g_1,g_2$ of $S$ are in the Poisson center of $Z$. Is $Z$ strongly Poisson cancellative?
\end{question}

For any Poisson algebra $A$, there is a notion of Poisson universal enveloping algebra $U(A)$, whose representation category is Morita equivalent to the category of Poisson modules over $A$. Now we recall the definition of $U(A)$ from \cite{OH3}. Let $A$ be a Poisson algebra and let $m_A =\{m_a\,|\,a\in A\}$ and $h_A =\{h_a\,|\,a\in A\}$ be two copies of the vector space $A$ endowed with two $\kk$-linear isomorphisms $m:A\to m_A: a\mapsto m_a$ and $h:A\to h_A: a \mapsto h_a$ for any $a\in A$. Then the Poisson universal enveloping algebra $U(A)$ is an associative algebra over $\kk$, with an identity 1, generated by $m_A$ and $h_A$ with relations, for any $x,y\in A$,

\begin{align*}
m_{xy}&= m_xm_y,\\
h_{\{x,y\}}&=h_xh_y-h_yh_x,\\
h_{xy}&=m_yh_x +m_xh_y,\\
m_{\{x,y\}} &=h_xm_y-m_yh_x =[h_x,m_y],\\
m_1 &= 1.
\end{align*}

\begin{question}
Let $A$ be any affine Poisson algebra, and $U(A)$ be its Poisson universal enveloping algebra. What is the relationship between $U(A)$ being cancellative and $A$ being Poisson cancellative?
\end{question}

In practice many Poisson structures can be derived from the process of  semiclassical limits, for instance see \cite{Goodsemi}, which we will recall now. Suppose $R$ is a torsionfree $\kk[\hbar]$-algebra such that $R/\hbar R$ is a commutative $\kk$-algebra. Denote the specialization map $\pi: R\to R/\hbar R$. The algebra $R/\hbar R$ equipped with the Poisson bracket: 
\[ \left\{\pi(a),\, \pi(b) \right\} := \pi\left(\frac{ \left[a,b\right]}{\hbar}\right)\ \text{for all $a,b\in R$,}\]
is called the semiclassical limit of the family of (noncommutative) algebras $\left( R_\alpha \right)_{\alpha \in \kk}$, where $R_\alpha:=R/(\hbar-\alpha)R$.

\begin{question}
Is the Poisson cancellation property of $R/\hbar R$ related to the cancellation property of the $R_\alpha$?
\end{question}

\bibliographystyle{amsplain}
\providecommand{\bysame}{\leavevmode\hbox to3em{\hrulefill}\thinspace}
\providecommand{\MR}{\relax\ifhmode\unskip\space\fi MR }
\providecommand{\MRhref}[2]{%
  \href{http://www.ams.org/mathscinet-getitem?mr=#1}{#2}
}
\providecommand{\href}[2]{#2}

\end{document}